\documentclass[10pt]{article}
\usepackage[english]{babel}
\usepackage[fleqn]{amsmath}
\usepackage{amssymb}
\usepackage{amsthm}
\usepackage{psfrag}
\usepackage{graphicx}
\usepackage[margin=10pt,font=footnotesize,labelfont=bf]{caption}
\usepackage[utf8]{inputenc}
\usepackage[T1]{fontenc}
\usepackage{lmodern}
\usepackage{hyperref}
\setlength{\textwidth}{14cm}

\title{The logic of sheaves, sheaf forcing  and the independence of the Continuum Hypothesis}

\author{J. Benavides\footnote{Department of mathematics "Ulisse Dini",  University of Florence, navarro@math.unifi.it}}

\date{}

\newtheorem{defin}{Definition}[section]
\newtheorem{teor}{Theorem}[section]
\newtheorem{corol}{Corollary}[section]
\newtheorem{lema}{Lemma}[section]

\begin{document}

\maketitle

\begin{abstract}
An introduction is given to the logic of sheaves of structures and to set theoretic forcing  constructions based on this logic. Using these tools, it is presented an alternative proof of  the independence of the Continuum Hypothesis; which simplifies and unifies the  classical boolean and intuitionistic approaches, avoiding the difficulties linked to the categorical machinery of the topoi based approach.
\end{abstract}

\section*{Introduction}

The first of the 10 problems that Hilbert posed in his famous conference the 1900 in Paris \footnote{The Hilberts Problems are 23 but in the conference he presented just 10 problems,  problems 1, 2, 6, 7, 8, 13, 16, 19, 21 and 22 in the publish paper \cite{hilbert1}.},  the problem of the Continuum Hypothesis (CH for short), which asserts that every subset of the real numbers is either enumerable or has the cardinality of the Continuum; was the origin of probably one of the greatest achievements in Mathematics of the second half of the XX century: The forcing technique created by Paul Cohen during the sixties. Using this technique Cohen showed that  the CH is independent (i.e. it cannot be proved or disproved) from the axioms of Zermelo-Fraenkel plus the Axiom of Choice ($ZFC$).\\

At the beginning Cohen approached the problem in a pure syntactic way, he defined a procedure to find a contradiction in $ZFC$ given a contradiction in $ZFC+\neg CH$; which implies that $Con(ZFC)\rightarrow Con (ZFC+\neg CH)$ (The consistency of $ZFC$ implies the consistency of $ZFC+\neg CH$). However this approach was a non-constructivist one, for it was not given a model of $ZFC+\neg CH$. Nevertheless in his publish papers Cohen developed two other approaches more constructivist. The first one assumes the existence of a countable transitive model (c.t.m.) of $ZFC$ \footnote{Remember that by the Gödel theorem the existence of such model cannot be proved within $ZFC$.}, then considering any finite list of axioms $\varphi_1,...,\varphi_n$ of $ZFC+\neg CH$   he proved within ZFC the existence of a c.t.m  for $\varphi_1,...,\varphi_n$. The second one  formalizes logic within $ZFC$, being a formal theorem of $ZFC$ the result which expresses that  for any c.t.m. of $ZFC$ exists a c.t.m. of $ZFC+\neg CH$ which is an extension of the ground model. A couple of years later, in 1965, Scott and Solovay gave an even more constructive version of Cohen's technique  based on Boolean Valued models, i.e. models rule by a Boolean-valued logic; the advantage of this approach is  the fact that it made more evident  how a model for $ZFC+\neg CH$ is constructed. An analogous approach on Kripke Models was developed by Fitting on 1969,  he reinterpreted the forcing technique as the construction of a Kripke Model over certain partial orders in such a way that each of the fibers are models of Set Theory.  Independently around 1963, Lawvere and Tierney attempts to axiomatise the category of sets conduced them to the work of Grothendieck on the generalization of the notion of sheaf  in the context of Category Theory. Lawvere discovered that  sheaves sited on a Grothendieck topology (a Grothendieck topos) admit the basic operations of set theory. In the same period  Tierney realized that Grothendieck's work could be used to approach  an axiomatic study of sheaves. After this (around 1964), working together, Lawvere and Tierney  axiomatise the  Categories of Sheaves of Sets, and then the category of sets,  using a categorial formulation of set-theoretic properties. Later, in 1972, Tierney found that in this context the notion of sheaf allowed to explain Cohen's Forcing and the Solovay-Scott approach in terms of topoi, obtaining a topos which contain a sheaf which lies between the sheaf of the natural numbers and its power sheaf, which despite of  being an intuitionist model can be collapsed  to a classical one. Also during this period, in 1971, Schoenfield  noted that the construction of Scott and Solovay could be adapted to partial orders independently of the Boolean scheme, here is when the form of forcing as we  understand it  nowadays appeared.\\
\indent It  is at this point in history  that forcing created a link between Set theory, Topos theory, and, by the idea  of completing theories by forcing developed by Barwise and Robinson, also with Model Theory. This can be also understood as a bridge between  classical Logic of  classical  set theory,  the intuitionist logic intrinsic to topoi and Kripke's Models, and infinitary  logics which had been studied in Model theory. However the startling aspect implicit in these connections  did not play a fundamental role on further developments, each theory followed mainly  their own motivations, and the different formulations of Cohen's forcing and their intrinsic difficulties did not suffer great variations or improvements, even when numerous and fundamental results were obtained using this technique.\\

In 1995, X. Caicedo  in its outstanding paper, \textit{The Logic of Sheaves of Structures} \cite{caicedo}, introduced a new approximation to  Kripke-Joyal semantics, avoiding the technicalities of Topos and Category theory. In this context he introduced a notion of a generic filter on the variation domain of a sheaf of structures and via these filters he proved the corresponding Generic Model Theorem (a generalization of the generic model  theorem of Forcing and the Łoś theorem on ultraproducts), which allows the collapse of an structure of variable objects to an static limit, unifying  set theoretic forcing constructions and the model existence theorems of classical and infinitary logics like  completeness, compactness, omitting types etc.  The simplicity and elegance of the proofs of these classical results of model theory in this context is remarkably, here the bridge that forcing establishes between Topos Theory, Set theory and Model Theory reappear showing its fundamental character.  In this paper I will show how, using these tools, it is possible to give an alternative proof of the Independence of the Continuum Hypothesis which simplifies and unifies the classical boolean and intuitionistic approaches and at the same time avoids most of the categorical tools of the topoi based approach. It will follow that the approaches of Scott-Solovay and Fitting are just particular cases of the construction of a Cumulative Hierarchy of variable sets over an arbitrary topological space; and instead of considering sheaves within a particular topos we will just need a sheaf which will correspond to the  construction of a Cumulative Hierarchy of variable sets within a topos. 

\section{Sheaves of Structures} 

In this section I will  give a brief summary of some  of the results contained in the paper of X. Caicedo \cite{caicedo};  the main objective will be to introduce the sheaves of structures, to understand the logic that will govern them and to introduce generic models and the fundamental theorem of Model Theory. Most of the results will be given without a proof, a  detailed and elegant introduction about these results is contained in \cite{caicedo}, nevertheless I will try to motivate the definitions and the results.\\

\subsection{The Logic of Sheaves of Structures}\label{lss}

The sheaves of structures will be formed by variable objects, which variate over a structure (a frame) of \textit{"States of Knowledge" }. To motivate the definition we can make the next analogy, consider the frame of variation  as spatio-temporal states, and us as variable objects which variate over these states. Our "truths"  and "properties"  will be conditioned by extensions within these spatio temporal states. This latter aspect is fundamental, opposite to classical logic where the truth is absolute  and independent of the context,  the logic of the variable objects will be determined by a very natural principle:
\begin{center}
 \textit{ "A proposition about an object will be true if it is true in an extension where this object lies."}
\end{center}
We can translate this ideas in mathematical terms very easily, we can think, for example, the extensions as neighbourhoods in a topological space and the objects as functions such that its image  determine  its characteristics in some instant or context. It is here where the notion of sheaf in its more elementary definition, a sheaf over a topological space, become very useful.

\begin{defin}
Let $X$ be a topological space, a sheaf over $X$ is a couple $(E,p)$ where $E$ is a topological space and $P:E\rightarrow X$ is a local homeomorphism, i.e., a function such that for each $e\in E$ there exists a neighbourhood $V$ of $e$ such that:
\begin{enumerate}
\item $p(V)$ is open in $X$
\item $p\upharpoonright_{V}:V\rightarrow P(V)$ is an homeomorphism. 
\end{enumerate}
Given an open set $U$ in $X$, a function $\sigma:U\rightarrow E$ such that  $p\circ \sigma=id_{U}$ is called a local section, if $U=X$, $\sigma$ is called  a global section. The set $p^{-1}(x)\subset E$ for $x\in X$ is called the fibre of $X$.
\end{defin}   

Now using the notion of sheaf we can translate our intuitive ideas in mathematical terms in a very interesting way: Consider the spatio temporal states as a sheaf $(E, p)$ over a space $X$, where $X$ describes for example the causal relations i.e. a structure which describe how events relate with others events, the variable objects will be sections  $\sigma$ on this sheaf in such a way that at each instant $x\in X$ (each node on the causal structure $X$)  $\sigma(x)$ determine what $\sigma$ perceives at the instant $x$, thus $\sigma(x)$ can be understood as the causal past of $x$ and the fiber $p^{-1}(x)$ can be understand as the sets of possible causal pasts of $x$ \footnote{I should say here that this is more than an analogy, the recent work of C. Isham \cite{isham2}, F. Markopoulou \cite{markopoulou} and J. L. Bell \cite{bell} deal with similar ideas in the context of topos theory and Frame-Valued sets. I think that the approach that I am presenting here will be useful also in this context. I have  used this tools on Quantum Mechanics showing how these tools can give us a new interesting perspective of the theory (see \cite{benavides3}) that can be fundamental to a future development of a Quantum Gravity theory.}. Furthermore, we know that if two sections coincide in one point, they coincide in a neighbourhood containing the point (this is a classical result that can be proved without too much difficulty), thus the equality relation satisfies our truth paradigm, two objects are seeing to be equal if and only if they coincide in an extension. With this example in mind we can construct a natural Model Theory of variable objects, the sheaves of structures.

\begin{defin} Given a fix type of structures $\tau=(R^{n},...,f^{m},...,c,...)$ a sheaf of $\tau$-structures
$\frak{A}$ over a topological space $X$ is given by:\\
a-) A sheaf $( E,p)$ over $X$.\\
b-) For each  $x\in X$,   an $L$-structure $\mathfrak{A}_x=(E_x,R_x,...,f_x,...,c_x,...)$.
where  $E_x=p^{-1}(x)$ (the fiber, which could be empty) is  the universe of the $L$-structure  $\mathfrak{A}$, and should satisfy: \\
i. $R^{\mathfrak{A}}=\bigcup_x R_x$ is open in $\bigcup_x
E^{n}_{x}$ seeing as subspace of $E^n$, where $R$ is an n-ary relation symbol.\\
ii.  $f^{\mathfrak{A}}=\bigcup_{x}f_x:\bigcup_x
E_{x}^{m}\rightarrow \bigcup_x E_x$ is a continuous function where $f$ is an $m$-parameter function symbol.\\
iii. $h:X\rightarrow E$  such that  $h(x)=c_x$, where $c$ is a constant symbol, is continuous.
\end{defin}

Given our logical language we need now an interpretation, a semantics of our structures. We can follow the reasoning within our previous example, as we said our objects will be the sections of the sheaves of structures that can represent, for example, observers in spacetime (timelike geodesics) which properties ( or what is ``true" for them) variate over the nodes (a local spacelike slice for example) of the causal structure. Therefore the logical propositions about this objects will be of the form: \textit{The observers $\sigma_1, \sigma_2,...,\sigma_n$ have (or perceive to have ) the property $P$ in the node  $x$}. And the observers will perceive something as "true" at the node $x$ if holds in an extension ( a close past,  the present and  the future) containing the node $x$. Or more precisely:

\begin{defin} 
Let  $L_{\tau}$ be a first order language of type $\tau$. Given a sheaf of structures  $\frak{A}$ of type $\tau$ over $X$, for 
$\varphi(v_1,...,v_n)\in L_{\tau}$ we can define by induction $\frak{A}\Vdash_x
\varphi(\sigma_1,...,\sigma_n)$ ($\frak{A}$ forces 
$\varphi(\sigma_1,...,\sigma_n)$ in $x\in X$ for  the sections
$\sigma_1,...,\sigma_n$ of $\frak{A}$ defined in $x$):\\
1. If $\varphi$ is an atomic formula and $t_1,...,t_k$ are $\tau$-terms: \[\frak{A}\Vdash_x
(t_1=t_2)[\sigma_1,...,\sigma_n]\Leftrightarrow
t_1^{\frak{A}_x}[\sigma_1(x),...,\sigma_n(x)]=t_2^{\frak{A}_x}[\sigma_1(x),...,\sigma_n(x)]\]
\[\frak{A}\Vdash_x
R(t_1,...,t_n)[\sigma_1,...,\sigma_n]\Leftrightarrow
(t_1^{\frak{A}_x}[\sigma_1(x),...,\sigma_n(x)],...,t_n^{\frak{A}_x}[\sigma_1(x),...,\sigma_n(x)])\in
R_x\]

2. $\frak{A}\Vdash_x
(\varphi\wedge\psi)[\sigma_1,...,\sigma_n]\Leftrightarrow
\frak{A}\Vdash_x\varphi[\sigma_1,...,\sigma_n]$ and
$\frak{A}\Vdash_x \psi[\sigma_1,...,\sigma_n]$\\

3. $\frak{A}\Vdash_x
(\varphi\vee\psi)[\sigma_1,...,\sigma_n]\Leftrightarrow
\frak{A}\Vdash_x\varphi[\sigma_1,...,\sigma_n]$ or
$\frak{A}\Vdash_x\psi[\sigma_1,...,\sigma_n]$.\\

4.
$\frak{A}\Vdash_x\neg\varphi[\sigma_1,...,\sigma_n]\Leftrightarrow$
exists $U$ neighbourhood of $x$ such that for all $y\in U$,
$\frak{A}\nVdash_y \varphi[\sigma_1,...,\sigma_n]$.\\

5. $\frak{A}\Vdash_x
(\varphi\rightarrow\psi)[\sigma_1,...,\sigma_n]\Leftrightarrow$
Exists  $U$ neighbourhood of  $x$ such that  for all $y\in U$ if 
$\frak{A}\Vdash_y\varphi[\sigma_1,...,\sigma_n]$ then
$\frak{A}\Vdash_y\psi[\sigma_1,...,\sigma_n]$.\\

6.  $\frak{A}\Vdash_x \exists
v\varphi(v,\sigma_1,...,\sigma_n)\Leftrightarrow$ exists $\sigma$
defined in $x$ such that
$\frak{A}\Vdash_x\varphi(\sigma,\sigma_1,...,\sigma_n)$\\

7. $\frak{A}\Vdash_x\forall
v\varphi(v,\sigma_1,...,\sigma_n)\Leftrightarrow$ exists $U$
neighbourhood of $x$ such that for all $y\in U$ and all $\sigma$
define in $y$,
$\frak{A}\Vdash_y\varphi[\sigma,\sigma_1,...,\sigma_n]$.

(In 4,5,7 \quad $U$ should satisfy $U\subseteq \bigcap_{i}dom(\sigma_i)$)

\end{defin}

The first consequence of the definition is that our principle of "truth"  is maintained, i.e if some proposition is true at a node $x$ it is also true in an extension containing $x$.

\begin{corol} \label{localtruth}
$\frak{A}\Vdash_x \varphi[\sigma_1,...,\sigma_n]$ if and only if there exists a neighbourhood of $x$, $U$, such that $\frak{A}\Vdash_y
\varphi[\sigma_1,...,\sigma_n]$ for all $y\in U$
\end{corol}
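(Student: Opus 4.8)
The plan is to prove Corollary \ref{localtruth} by induction on the complexity of the formula $\varphi$, proceeding in parallel over all tuples of sections simultaneously (so the induction hypothesis applies to subformulas with arbitrary section assignments). The backward implication is trivial in every case: if $\frak{A}\Vdash_y\varphi[\sigma_1,\dots,\sigma_n]$ for all $y$ in some neighbourhood $U$ of $x$, then in particular $\frak{A}\Vdash_x\varphi[\sigma_1,\dots,\sigma_n]$ since $x\in U$. So the whole content is the forward implication, i.e. that forcing at a point propagates to a neighbourhood.

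\medskip

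For the base case, suppose $\frak{A}\Vdash_x R(t_1,\dots,t_n)[\sigma_1,\dots,\sigma_n]$, i.e.\ the tuple $(t_1^{\frak{A}_x}[\vec\sigma(x)],\dots,t_n^{\frak{A}_x}[\vec\sigma(x)])$ lies in $R_x\subseteq R^{\frak{A}}$. The map $y\mapsto(t_1^{\frak{A}_y}[\vec\sigma(y)],\dots,t_n^{\frak{A}_y}[\vec\sigma(y)])$ is continuous on $\bigcap_i \mathrm{dom}(\sigma_i)$ because it is built by composing the continuous sections $\sigma_i$ with the continuous interpretations $f^{\frak{A}}$ of the function symbols and the continuous constant maps $h$ (conditions ii, iii of the definition of a sheaf of structures); and $R^{\frak{A}}$ is open in $\bigcup_y E_y^n$ by condition i. Pulling back the open set $R^{\frak{A}}$ along this continuous map gives an open neighbourhood $U$ of $x$ on which the tuple stays in $R^{\frak{A}}$, hence in $R_y$ for each $y\in U$, which is exactly $\frak{A}\Vdash_y R(t_1,\dots,t_n)[\vec\sigma]$ for all $y\in U$. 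The equality atomic case is identical, using that the diagonal of $E$ is open (equivalently, that two sections agreeing at a point agree on a neighbourhood, the classical sheaf fact mentioned in the text). For the connectives $\wedge$ and $\vee$ (clauses 2, 3) one just intersects (resp.\ takes either) the neighbourhoods provided by the induction hypothesis on the immediate subformulas. For $\neg$, $\to$, $\forall$ (clauses 4, 5, 7), the statement is essentially built in: e.g.\ if $\frak{A}\Vdash_x\neg\varphi[\vec\sigma]$ then there is a neighbourhood $U$ of $x$ with $\frak{A}\nVdash_y\varphi[\vec\sigma]$ for all $y\in U$; but then for any $z\in U$ the same $U$ witnesses $\frak{A}\Vdash_z\neg\varphi[\vec\sigma]$, since $U$ is a neighbourhood of $z$ as well — $U$ being open. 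The cases of $\to$ and $\forall$ are the same one-line argument, exploiting that an open $U$ is a neighbourhood of each of its points.

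\medskip

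The only genuinely non-formal case is the existential quantifier, clause 6, and this is where I expect the main obstacle. Suppose $\frak{A}\Vdash_x\exists v\,\varphi(v,\vec\sigma)$, witnessed by a section $\tau$ defined at $x$ with $\frak{A}\Vdash_x\varphi(\tau,\vec\sigma)$. By the induction hypothesis applied to $\varphi$ with the augmented tuple $(\tau,\sigma_1,\dots,\sigma_n)$, there is an open neighbourhood $U$ of $x$, which we may shrink to lie inside $\mathrm{dom}(\tau)\cap\bigcap_i\mathrm{dom}(\sigma_i)$, such that $\frak{A}\Vdash_y\varphi(\tau,\vec\sigma)$ for every $y\in U$. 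But then for each such $y$ the single section $\tau$ (restricted to $U$) is itself a witness defined at $y$, so $\frak{A}\Vdash_y\exists v\,\varphi(v,\vec\sigma)$. The point worth stressing is that the \emph{same} witness $\tau$ works throughout $U$; no gluing or Axiom-of-Choice-style selection of local witnesses is needed here, precisely because clause 6 only asks for a witness defined \emph{at the point}, not on a neighbourhood. (This asymmetry between clauses 6 and 7 is exactly what makes the logic of sheaves behave intuitionistically, and it is what keeps this induction step clean.) Collecting the seven cases completes the induction, and hence the proof of Corollary \ref{localtruth}.
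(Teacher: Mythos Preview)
Your induction on formula complexity is correct and is the standard argument for this result. The paper itself does not supply a proof of Corollary~\ref{localtruth}: it is stated without proof, with a blanket remark that detailed arguments are to be found in Caicedo's original paper~\cite{caicedo}. So there is nothing in the present paper to compare your argument against; your write-up simply fills in what the author chose to omit.

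One small remark: in the clauses for $\neg$, $\to$, $\forall$ you rely on the fact that the witnessing neighbourhood $U$ is open (so that it is a neighbourhood of each of its points) and lies inside $\bigcap_i\mathrm{dom}(\sigma_i)$; this is exactly the constraint the paper imposes in the parenthetical after clause~7, so your use of it is legitimate, but it would be worth saying explicitly that you may always shrink $U$ to an open set with this property.
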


Thus, it is clear  that the extensional property  of truth is a fundamental characteristic of these structures, furthermore we can characterize the semantics in terms of extensions instead of nodes obtaining a simple version of the Kripke-Joyal semantics.

\begin{defin} 
Given an open set  $U\subset X$ and $\sigma_i$ sections  define
in $U$, we say that
\[\frak{A}\Vdash_U\varphi[\sigma_1,...,\sigma_n]\Leftrightarrow\text{
for all $x\in U$
}\frak{A}\Vdash_x\varphi[\sigma_1,...,\sigma_n]\]
\end{defin}

\begin{teor}[Kripke-Joyal semantics]

$\frak{A}\Vdash_U\varphi[\sigma_1,...,\sigma_n]$ is defined by:\\
1. If $\varphi$ is an atomic formula:\\
$\frak{A}\Vdash_U\sigma_1=\sigma_2\Leftrightarrow
\sigma_1\upharpoonright_U=\sigma_2\upharpoonright_U$.\\
$\frak{A}\Vdash_U R[\sigma_1,...,\sigma_n]\Leftrightarrow
(\sigma_1,...,\sigma_n)(U)\subseteq R^{\frak{A}}$.\\
2.
$\frak{A}\Vdash_U(\varphi\wedge\psi)[\sigma_1,...,\sigma_n]\Leftrightarrow
\frak{A}\Vdash_U\varphi[\sigma_1,...,\sigma_n]$ and
$\frak{A}\Vdash_U\psi[\sigma_1,...,\sigma_n]$.\\
3.
$\frak{A}\Vdash_U(\varphi\vee\psi)[\sigma_1,...,\sigma_n]\Leftrightarrow$
there exist open sets $V,W$ such that  $U=V\cup W$,
$\frak{A}\Vdash_V\varphi[\sigma_1,...,\sigma_n]$ and
$\frak{A}\Vdash_W\psi[\sigma_1,...,\sigma_n]$.\\
4.
$\frak{A}\Vdash_U\neg\varphi[\sigma_1,...,\sigma_n]\Leftrightarrow$
For all open set  $W\subseteq
U$,\quad$W\neq\emptyset$,\quad$\frak{A}\nVdash_W\varphi[\sigma_1,...,\sigma_n]$.\\
5. $\frak{A}\Vdash_U
\varphi\rightarrow\psi[\sigma_1,...,\sigma_n]\Leftrightarrow$ for all
open set $W\subset U$,\quad if
$\frak{A}\Vdash_W\varphi[\sigma_1,...,\sigma_n]$ then
$\frak{A}\Vdash_W\psi[\sigma_1,...,\sigma_n]$.\\
6. $\frak{A}\Vdash_U\exists
v\varphi(v,\sigma_1,...,\sigma_n)\Leftrightarrow$ there exists  $\{U_i\}_i$  an open cover of  $U$ and $\mu_i$
sections defined on  $U_i$ such that 
$\frak{A}\Vdash_{U_i}\varphi[\mu_i,\sigma_1,...,\sigma_n]$ 
for all $i$.\\
7.
$\frak{A}\Vdash_U\forall\varphi(v,\sigma_1,...,\sigma_n)\Leftrightarrow$
for all open set $W\subset U$ and $\mu$ defined on  $W$,
$\frak{A}\Vdash_W\varphi(\mu,\sigma_1,...,\sigma_n)$.
\end{teor}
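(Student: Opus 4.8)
The plan is to prove that the clause-by-clause description agrees with the already-given definition $\frak{A}\Vdash_U\varphi[\sigma_1,\dots,\sigma_n]\Leftrightarrow(\forall x\in U)\,\frak{A}\Vdash_x\varphi[\sigma_1,\dots,\sigma_n]$, by induction on the complexity of $\varphi$; in each case one unwinds this definition and compares it with the matching clause of the pointwise forcing relation. The single ingredient that does the real work is Corollary~\ref{localtruth}, which tells us that $\{x:\frak{A}\Vdash_x\psi[\sigma_1,\dots,\sigma_n]\}$ is open for every formula $\psi$. First I would dispatch the atomic and conjunctive cases, which are immediate: $\frak{A}\Vdash_U(\sigma_1=\sigma_2)$ unwinds to $\sigma_1(x)=\sigma_2(x)$ for all $x\in U$, i.e.\ $\sigma_1\!\upharpoonright_U=\sigma_2\!\upharpoonright_U$; $\frak{A}\Vdash_U R[\sigma_1,\dots,\sigma_n]$ unwinds to $(\sigma_1(x),\dots,\sigma_n(x))\in R_x$ for all $x\in U$, i.e.\ $(\sigma_1,\dots,\sigma_n)(U)\subseteq\bigcup_x R_x=R^{\frak{A}}$; and a universally quantified conjunction distributes over the quantifier.

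Then I would treat the two ``positive local'' cases, disjunction and the existential quantifier, where the point is to build an open cover out of pointwise data using Corollary~\ref{localtruth}. For $\varphi\vee\psi$, set $V:=\{x\in U:\frak{A}\Vdash_x\varphi[\sigma_1,\dots,\sigma_n]\}$ and $W:=\{x\in U:\frak{A}\Vdash_x\psi[\sigma_1,\dots,\sigma_n]\}$; these are open by Corollary~\ref{localtruth}, the statement $\frak{A}\Vdash_U(\varphi\vee\psi)[\sigma_1,\dots,\sigma_n]$ says exactly that $U=V\cup W$, and $\frak{A}\Vdash_V\varphi$, $\frak{A}\Vdash_W\psi$ then hold by construction, while the reverse implication is trivial. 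For $\exists v\,\varphi$: from $\frak{A}\Vdash_U\exists v\,\varphi$ one chooses, for each $x\in U$, a section $\mu_x$ witnessing $\frak{A}\Vdash_x\varphi[\mu_x,\sigma_1,\dots,\sigma_n]$, and applies Corollary~\ref{localtruth} to obtain a neighbourhood $U_x\subseteq U\cap dom(\mu_x)$ with $\frak{A}\Vdash_{U_x}\varphi[\mu_x\!\upharpoonright_{U_x},\sigma_1,\dots,\sigma_n]$; the family $\{U_x\}$ covers $U$, which is one direction, and the converse is read straight off the definition of $\frak{A}\Vdash_{U_i}$.

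Next I would handle negation, implication and the universal quantifier together: each of these is ``monotone'', and in every one the witnessing neighbourhood demanded by the pointwise clause can simply be taken to be $U$ itself. For $\varphi\rightarrow\psi$, if $\frak{A}\Vdash_U(\varphi\rightarrow\psi)$ then $\frak{A}\Vdash_x\varphi\Rightarrow\frak{A}\Vdash_x\psi$ for every $x\in U$, so any open $W\subseteq U$ with $\frak{A}\Vdash_W\varphi$ satisfies $\frak{A}\Vdash_W\psi$; conversely, if the theorem's clause holds and $\frak{A}\Vdash_y\varphi$ for some $y\in U$, Corollary~\ref{localtruth} supplies an open $V\subseteq U$ containing $y$ with $\frak{A}\Vdash_V\varphi$, hence $\frak{A}\Vdash_V\psi$ and in particular $\frak{A}\Vdash_y\psi$, so $U$ witnesses $\frak{A}\Vdash_y(\varphi\rightarrow\psi)$. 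Negation is the degenerate case of the same argument (no nonempty open subset of $U$ forces $\varphi$), and the universal quantifier is identical once one also notes the routine fact — provable by the same induction — that $\frak{A}\Vdash_y\varphi[\mu,\sigma_1,\dots,\sigma_n]$ depends only on the germ of $\mu$ at $y$, so that restricting a section to a subneighbourhood does not affect whether it is forced at a point.

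The step I expect to be the only genuinely delicate one is the existential case: this is where the sheaf structure (open domains of local sections, local homeomorphism) together with Corollary~\ref{localtruth} must be combined to turn ``a witness exists at each point'' into ``one section witnesses it on a whole open set'', with the extra bookkeeping needed to keep each $U_x$ inside $\bigcap_i dom(\sigma_i)$ as well. Disjunction is a milder instance of the same phenomenon, and everything else is a mechanical unwinding of the definitions.
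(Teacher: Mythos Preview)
The paper does not actually prove this theorem: at the start of Section~1 the author announces that ``most of the results will be given without a proof'' and refers to \cite{caicedo}, and indeed the statement of the Kripke--Joyal semantics is followed only by commentary, not by a proof environment. So there is no paper proof to compare your attempt against.

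That said, your argument is correct and is exactly the natural one. The induction on formula complexity, with Corollary~\ref{localtruth} doing all the real work, is the standard route: it is what turns a pointwise disjunction into an honest open splitting $U=V\cup W$, and a pointwise existential witness into a locally coherent family $\{(U_x,\mu_x)\}$ covering $U$. Your treatment of $\rightarrow$, $\neg$ and $\forall$ --- taking $U$ itself as the witnessing neighbourhood and invoking Corollary~\ref{localtruth} to pass from a point where the antecedent holds to an open subset where it holds --- is also the intended manoeuvre. The remark that forcing at a point depends only on the germ of each section parameter is indeed needed for the $\forall$ clause and follows by the same induction (the atomic base case is immediate from the fibrewise definition). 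Nothing is missing; the only step requiring any care is, as you say, the existential one, and you have handled it.
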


The geometrical character of this theorem and the previous definitions show an explicit connection between geometry and logic. For example the result about the existential quantifier in the theorem above shows that the validity of a proposition involving an existential in a open set   $U$, does not  imply the existence of a global open section in this set, but just the existence of a family of sections which satisfy the property locally; this not implies the existence of a global section, but implies the existence of a section which satisfy the property defined in a dense open set contained in  $U$ (see theorem 3.3 in \cite{caicedo}). In the same way the logic also can determine the topological features of the fiber space and viceversa; it can be proved, for example,  that the excluded middle is forced in a node $x$ if and only if there exists a neighbourhood $U$ of $x$ such that in $p^{-1}(U)$ the Hausdorff property (T2) holds. Thus the topology of the ground space and of the fiber space conditioned the logic; a particular trivial case is when the node $x$ is an isolate point in $X$ or when $X=\{x\}$, in theses cases we obtain we obtain classical logic as an special case of the logic of sheaves of structures, or more precisely in these situations for every formula $\varphi$:

\[\frak{A}\Vdash \varphi \leftrightarrow \frak{A}_x\models \varphi.\]
 
The next natural question is what is the logic that rules the sheaves of structures, is any logic we know? The answer is very interesting, we will see that the logic of sheaves of structures is an intermediate logic between  intuitionistic logic and classical logic. This means that,  a proposition is a law of the intuinionistic logic if and only if is forced en each node of every sheaf of structures, but as we see, depending on the topology of the ground space, we know that this logic can get closer to classical logic.\\
To see that intuitionistic logic holds on the sheaves of structures lets remember how Kripke's Models and the respective forcing are defined:

\begin{defin}
$\mathbb{K}=\langle\mathbb{P},\leq,(K_p)_{p\in\mathbb{P}},(f_{pq})_{(p,q)\in\mathbb{P}}\rangle$
is a $\tau$-Kripke model if:\\
i. $(\mathbb{P},\leq)$ is a partial order.\\
ii. For each $p\in\mathbb{P}$, $K_p$ is a classic $\tau$-structure.\\
iii. Given $p,q\in\mathbb{P}$ if $p\leq q$ there exists an homomorphism 
\[f_{pq}:K_p\rightarrow K_q\] such that
$f_{pp}=id_{K_p}$ and if  $t\geq q\geq p$, $f_{qt}\circ
f_{pq}=f_{pt}$
\end{defin}

\begin{defin}[Kripke's Forcing]
Given a Kripke model 
$\mathbb{K}=\langle\mathbb{P},\leq,(K_i)_{i\in\mathbb{P}},(f_{pq})_{(p,q)\in\mathbb{P}}\rangle$
we define for $p\in\mathbb{P}$, 
$\Vdash_p\varphi$ ($\varphi$ is forced at the node $p$) as: \\
1. $\Vdash_p a_1=a_2\Leftrightarrow a_1=a_2$\\
2. $\Vdash_p R(a_1,...,a_n)\Leftrightarrow (a_1,...,a_n)\in
R^{K_p}$\\
3. $\Vdash_p
(\varphi\rightarrow\psi)[a_1,...,a_n]\Leftrightarrow\forall q\geq
p, \Vdash_q\varphi[f_{pq}(a_1),...,f_{pq}(a_n)]\Rightarrow
\Vdash_q\psi[f_{pq}(a_1),...,f_{pq}(a_n)]$.\\
4. $\Vdash_p\neg\varphi[a_1,...,a_n]\Leftrightarrow\forall q\geq
p:\nVdash_q\varphi[f_{pq}(a_1),...,f_{pq}(a_n)]$.\\
5. $\Vdash_p\exists x\varphi(x)[a_1,...,a_n]\Leftrightarrow$
exists $a\in K_p$ such that $\Vdash_p\varphi[a,a_1,...,a_n]$.\\
6. $\Vdash_p\forall x\varphi(x)[a_1,...,a_n]\Leftrightarrow\forall
q\geq p\quad\forall b\in K_q:
\Vdash_q\varphi[b,f_{pq}(a_1),...,f_{pq}(a_n)]$.
\end{defin}

A Kripke model can be seen as a Sheaf of Structures in the next way: In $\mathbb{P}$ consider the topology which has as a base the sets of the form $[r)=\{t\in\mathbb{P}: t\geq r\}$ for $r\in \mathbb{P}$. We write the topology generated by these sets as $\mathbb{P}^{+}$, the open sets of this topology are the hereditary subsets of the order i.e. $\mathbb{P}^{+}=\{S\subset \mathbb{P}:$ for all $i\in S$ if $j\geq i$ then $j\in S\}$.  For each $q\in \mathbb{P}$ take as its fiber the set  $K_q=p^{-1}(q)$. On the set $E=\dot{\bigcup}_{q\in\mathbb{P}}K_q$ we consider the topology generated by the images of the sections $\sigma:=(a_i)_{i\in  U}$ where $a_i\in K_i$ , $U$ is an open set and $f_{ij}(a_i)=a_j$ for $i\leq j$. In this way we get a sheaf of structures $\mathbb{K}^{*}$ of type $\tau$ over the space $X=(\mathbb{P},\mathbb{P}^{+})$. It can be proved that all the inuitionistic laws and all the theorems of the Heyting calculus  hold on every node of all sheafs, and if a proposition holds in all the sheafs particularly holds in a Kripke Model, thus by the completeness theorem (see \cite{van} theorem 5.3.10) it can be deduced by the Heyting calculus.\\

An alternative way to see that intuitionistic logic hold in all the sheafs of structures is  to consider a topological valuation on formulas. Let $\sigma_1,..., \sigma_n$ sections of a sheaf $\mathfrak{A}$ defined on an open set $U$, we define the \textit{"truth value"} of a proposition $\varphi$ in $U$ as:
\[ [[\varphi[\sigma_1,...,\sigma_n] ]]_{U}:=\{x\in U: \mathfrak{A}\Vdash _x \varphi [\sigma_1,...,\sigma_n]\}\]
From corollary \ref{localtruth} we know that $[[\varphi[\sigma_1,...,\sigma_n]]]_{U}$ is an open set, thus we can define a valuation as a topological valuation on formulas:
\[T_{U}:\varphi\mapsto [[\varphi[\sigma_1,...,\sigma_n]]]_{U}\]
Note from the definition of the logic we can define the value of the logic operator in terms of the operations of the algebra of open sets. For instance the proposition $\neg\varphi$ is valid in a point if there exists a neighbourhood of that point where $\varphi$ does not hold at each point, then $\neg\varphi$ holds in the interior of the complement of the set where $\varphi$ holds i.e $[[\neg \varphi]]_{U}= Int ((U\setminus [[\varphi]]_{U})$. Reasoning in analogous way we have:

\begin{enumerate}
\item $[[\sigma_1=\sigma_2]]_{U}=\{x\in U:\sigma_1(x)=\sigma_2(x)\}$.
\item $[[R[\sigma_1,...,\sigma_n]]]_{U}=\{x\in U: (\sigma_1(x),...,\sigma_n(x))\in R^{A}\}$
\item $[[\neg \varphi]]_{U}= Int ((U\setminus [[\varphi]]_{U}),$
\item $[[\varphi\wedge \psi]]_{U}=[[\varphi]]_{U}\cap[[\psi]]_{U},$
\item $[[\varphi\vee \psi]]_{U}=[[\varphi]]_{U}\cup [[\psi]]_{U},$
\item $[[\varphi\rightarrow \psi]]_{U}= Int((U\setminus[[\varphi]]_{U})\cup [[\psi]]_{U}),$
\item $[[\exists u \varphi (u)]]_{U}=\bigcup^{\sigma\in W}{W\subset U}[[\varphi[\sigma]]]_{W},$
\item $[[\forall u\varphi(u)]]_{U}=Int(\bigcap^{\sigma\in W}_{W\subset U}[[\varphi[\sigma]]]_{W}).$
\end{enumerate}

It follows then 

\[\mathfrak{A}\Vdash_{U} \varphi[\sigma_1,...,\sigma_n]\leftrightarrow [[\varphi[\sigma_1,...,\sigma_n]]]_{U}=U.\]
 
Therefore, since the open sets form a complete Heyting algebra, and we have defined the logic operators in terms of the algebra operations, we have that the formulas that get the value $1$ in complete Heyting algebras are forced in every sheaf of structures.    
From these results which relate the sheaves of structures with Kripke's models and intuitionistic logic is becoming clear that the independence results based on Kripke's Model are just particular cases of this more general approach. On the other hand the approach based on Boolean Valued Models of set theory, developed originally by Scott, can be reduced to a sheaf of structures where the \textit{``truth"} values $[[\varphi(\sigma_1,...,\sigma_n]]_{U}$ are clopen sets. However to see how the approach treated here include completely the intuitionistic and Boolean Valued Model approaches we need one more result.
 
\subsection{Generic Models}

As was sketched in the introduction, the notion of generic model and the respective generic model theorem, in the context of sheaves of structures, are the fundamental results that will unify the notion of generic model in set theoretic forcing and  the generic models of Model Theory  introduced by Robinson. In a few words, these two approaches can be seen as the construction of classical structures from  Kripke's Models using a generic filter to connect the semantics of the classical structures with the semantics of the Kripke's model.  From this, as we seen before that Kripke's models are just an special case of our sheaves of structures, the notion of generic model can be easy generalized in this context.  The main advantage will be that the usual conditions of enumerability, that are assumed to show the existence of generic models in the classical approaches, will not be necessary in this more general context.\\

\begin{defin} Let $X$ be a topological space and  $Op(X)$ the set of open sets of $X$ . Let  $\mathcal{F}\subset
Op(X)$, $\mathcal{F}$ is called a filter of open sets  of  $X$ if:\\
i. $X\in\mathcal{F}$.\\
ii. If  $U,V\in\mathcal{F}$ then  $U\cap V\in\mathcal{F}$.\\
iii.Given  $V\in\mathcal{F}$ If $V\subset U$ then
$U\in\mathcal{F}$.
\end{defin}

\begin{defin} 
Let $\frak{A}$ be a sheaf of structures of type $\tau$
over $X$. Let  $\mathcal{F}$ be a  filter of open sets of $X$, we say that $\mathcal{F}$ is a Generic Filter of 
$\frak{A}$ if:\\
i. Given $\varphi(v_1,...,v_n)$ and  $\sigma_1,...,\sigma_n$ arbitrary sections of $\frak{A}$ define on $U\in\mathcal{F}$, there exists 
$W\in\mathcal{F}$ such that 
$\frak{A}\Vdash_W\varphi[\sigma_1,...,\sigma_n]$ or
$\frak{A}\Vdash_W\neg\varphi[\sigma_1,...,\sigma_n]$.\\
ii. Given $\sigma_1,...,\sigma_n$ arbitrary sections of
$\frak{A}$ define  on  $U\in\mathcal{F}$, and 
$\varphi(v,v_1,...,v_n)$ a first order formula. If
$\frak{A}\Vdash_U\exists v\varphi(v,\sigma_1,...,\sigma_n)$
then it exists $W\in\mathcal{F}$ and $\sigma$ defined on $W$ such that
$\frak{A}\Vdash_W\varphi(\sigma,\sigma_1,...,\sigma_n)$.
\end{defin}

Consider now $\frak{A}(U)=\{\sigma: dom(\sigma)=U\}$ the sections with domain $U$. $\frak{A}(U)$ is a structure of the same type of $\frak{A}$ seen as a substructure of $\prod_{x\in U}\frak{A}_x$. If $V\subset U$ there is a natural homomorphism $\rho_{UV}:\frak{A}(U)\rightarrow \frak{A}(V)$,  such that $\rho_{UV}(\sigma)=\sigma\upharpoonright V$. Now we can a construct a new classical $\tau$-structure as the direct limit of the $\frak{A}(U)$ as $U$ variates over a filter $\mathcal{F}$:

\begin{defin}  Let $\frak{A}$ be a sheaf of structures and 
$\mathcal{F}$ a filter over $X$, we can associate a classical model to $\frak{A}$ in the next way:\\
let
\[\frak{A}[\mathcal{F}]=\lim_{\rightarrow
U\in\mathcal{F}}\frak{A}(U)\] i.e
\[\frak{A}[\mathcal{F}]=\dot{\bigcup}_{U\in\mathcal{F}}\frak{A}(U)/_{\sim_{\mathcal{F}}}\]
 where for  $\sigma\in\frak{A}(U)$ and $\mu\in\frak{A}(V)$
 \[\sigma\sim_{\mathcal{F}}\mu\Leftrightarrow\text{ existe
 }W\in\mathcal{F}\text{ tal que
 }\sigma\upharpoonright_W=\mu\upharpoonright_W\]
 Let $[\sigma]$ the class of  $\sigma$. We define the relations and functions in the next way:
 \[([\sigma_1],...,[\sigma_n])\in R^{\frak{A}[\mathcal{F}]}
 \Leftrightarrow\text{ existe
 }U\in\mathcal{F}:(\sigma_1,...,\sigma_n)\in R^{\frak{A}(U)}\]
 \[f^{\frak{A}[\mathcal{F}]}([\sigma_1],...,[\sigma_n])=
 [f^{\frak{A}(U)}(\sigma_1,...,\sigma_n)]\] $\mathcal{F}.$ If $\mathcal{F}$ is a generic filter over $X$ for  $\frak{A}$ we say that 
 $\frak{A}[\mathcal{F}]$ is a generic model. 
\end{defin}

We have then the next fundamental result:

\begin{teor}[The Fundamental Theorem of Model Theory] \label{ftmt}
Let $\mathcal{F}$ be a generic filter over $X$ for 
$\frak{A}$, we have then:
\begin{eqnarray*}
\frak{A}[\mathcal{F}]\models\varphi([\sigma_1],...,[\sigma_n])&\Leftrightarrow&\text{
existe }U\in\mathcal{F}\text{ tal que
}\frak{A}\Vdash_U\varphi^G(\sigma_1,...,\sigma_n)\\
&\Leftrightarrow&\{x\in
X:\frak{A}\Vdash_x\varphi^G(\sigma_1,...,\sigma_n)\}\in\mathcal{F}
\end{eqnarray*}
Where $\varphi^G$ is the Gödel translation\footnote{See \cite{van} chapter 5 or \cite{benavides} section 0.2.} of the formula $\varphi$. 
$\varphi$.
\end{teor}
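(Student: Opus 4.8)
The plan is to prove the two equivalences by induction on the structure of $\varphi$, working with the Gödel translation $\varphi^G$ throughout. Recall that $\varphi^G$ replaces $\vee$ by $\neg(\neg\varphi \wedge \neg\psi)$, $\exists v\,\psi$ by $\neg\forall v\,\neg\psi$, and so on, so that $\varphi^G$ is built only from atomic formulas, $\wedge$, $\rightarrow$, $\neg$, and $\forall$ (up to the usual variants), and classically $\varphi \leftrightarrow \varphi^G$ while intuitionistically $\neg\neg\varphi^G \leftrightarrow \varphi^G$. The equivalence between the two right-hand sides, namely ``there exists $U \in \mathcal{F}$ with $\frak{A}\Vdash_U \varphi^G$'' iff ``$\{x : \frak{A}\Vdash_x \varphi^G\} \in \mathcal{F}$'', is essentially formal: if $\frak{A}\Vdash_U\varphi^G$ then $U \subseteq \{x : \frak{A}\Vdash_x\varphi^G\}$, so the latter set is in $\mathcal{F}$ by upward closure; conversely that set is open by Corollary \ref{localtruth}, and if it lies in $\mathcal{F}$ it is itself an admissible $U$. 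So the real work is the first equivalence.

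First I would set up the base case. For an atomic formula $\varphi$, $\varphi^G = \varphi$, and by the definition of $R^{\frak{A}[\mathcal{F}]}$ and of equality in the direct limit, $\frak{A}[\mathcal{F}] \models \varphi([\sigma_1],\dots,[\sigma_n])$ holds iff there is $U \in \mathcal{F}$ with $(\sigma_1,\dots,\sigma_n) \in R^{\frak{A}(U)}$, i.e. iff $\frak{A}\Vdash_U R[\sigma_1,\dots,\sigma_n]$ by the Kripke--Joyal clause for atomic formulas; the equality case is identical using $\sim_{\mathcal{F}}$. For the conjunction step, use that $\frak{A}\Vdash_U (\psi \wedge \chi) \Leftrightarrow \frak{A}\Vdash_U\psi$ and $\frak{A}\Vdash_U\chi$, together with the fact that $\mathcal{F}$ is closed under finite intersections to merge the two witnessing open sets into one. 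The universal quantifier step uses the Kripke--Joyal clause $\frak{A}\Vdash_U \forall v\,\psi \Leftrightarrow$ for all open $W \subseteq U$ and all $\mu$ on $W$, $\frak{A}\Vdash_W\psi[\mu,\dots]$; here genericity enters, because the ``$\Leftarrow$'' direction needs: if $\frak{A}[\mathcal{F}] \not\models \forall v\,\psi$ then some $[\mu]$ refutes $\psi$, whence by induction some $W \in \mathcal{F}$ forces $\neg\psi^G$ or fails to force $\psi^G$ on a refining open set, and then generic filter condition (i) plus condition (ii) are used to pull the witness back into $\mathcal{F}$; conversely if no $U \in \mathcal{F}$ forces $\forall v\,\psi^G$, then for every such $U$ there is $W \subseteq U$ and $\mu$ on $W$ with $\frak{A}\nVdash_W\psi^G$, and again one invokes genericity (i) to upgrade ``not forcing'' to ``forcing the negation'' on a member of $\mathcal{F}$, thereby producing $[\mu]$ refuting $\psi$ in the limit.

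The crux — and the step I expect to be the main obstacle — is the negation and implication clauses, precisely because the sheaf semantics for $\neg$ and $\rightarrow$ are not ``pointwise'': $\frak{A}\Vdash_x \neg\psi$ means $\psi$ fails on a whole neighbourhood, not merely at $x$. The generic filter condition (i) is the tool that repairs this: given sections on $U \in \mathcal{F}$ and a formula $\psi$, there is $W \in \mathcal{F}$ deciding $\psi$, i.e. $\frak{A}\Vdash_W\psi$ or $\frak{A}\Vdash_W\neg\psi$. For $\neg\psi^G$: if $\frak{A}[\mathcal{F}] \models \neg\psi^G$ then $\frak{A}[\mathcal{F}] \not\models \psi^G$, so by induction no $U \in \mathcal{F}$ forces $\psi^G$; apply condition (i) to get $W \in \mathcal{F}$ deciding $\psi^G$; since forcing $\psi^G$ is excluded, $\frak{A}\Vdash_W \neg\psi^G$, giving the desired $U$. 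Conversely if $\frak{A}\Vdash_W \neg\psi^G$ for some $W \in \mathcal{F}$, then no refinement forces $\psi^G$, so in particular no member of $\mathcal{F}$ below $W$ does; by the intersection property no member of $\mathcal{F}$ at all forces $\psi^G$ (intersect with $W$), hence by induction $\frak{A}[\mathcal{F}] \not\models \psi^G$, i.e. $\frak{A}[\mathcal{F}] \models \neg\psi^G$. The implication $\psi^G \rightarrow \chi^G$ is handled similarly, decomposing via condition (i) applied to $\psi^G$: on a deciding $W \in \mathcal{F}$, either $\frak{A}\Vdash_W\neg\psi^G$ (then $\frak{A}\Vdash_W \psi^G\rightarrow\chi^G$ trivially and $\frak{A}[\mathcal{F}]\not\models\psi^G$, so the implication holds in the limit), or $\frak{A}\Vdash_W\psi^G$ and one must further decide $\chi^G$ and chase the equivalence through; the care needed is to ensure every ``there exists a refining open set'' in the Kripke--Joyal clauses can be replaced, modulo genericity, by ``there exists a member of $\mathcal{F}$'', which is exactly what conditions (i) and (ii) guarantee. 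Finally, a brief remark closes the argument: because $\varphi \leftrightarrow \varphi^G$ is a classical tautology, $\frak{A}[\mathcal{F}] \models \varphi \Leftrightarrow \frak{A}[\mathcal{F}] \models \varphi^G$, so the statement as displayed (with $\varphi$ on the left and $\varphi^G$ on the right) follows from what has been proved for $\varphi^G$ on both sides.
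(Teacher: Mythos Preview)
The paper does not actually prove this theorem: as announced at the start of Section~1, the results of that section are quoted without proof from Caicedo~\cite{caicedo}, and Theorem~\ref{ftmt} is simply stated and then used. So there is no in-paper argument to compare your proposal against. Your overall strategy---induction on the complexity of $\varphi^G$, exploiting the stability $\neg\neg\varphi^G\leftrightarrow\varphi^G$ and the two genericity clauses for the non-pointwise connectives---is the standard one, and your treatment of the equivalence of the two right-hand sides, of the atomic, $\wedge$, $\neg$ and $\rightarrow$ cases, is correct.

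The one place where your sketch is too loose to count as a proof is the $(\Rightarrow)$ direction of the $\forall$ step. You argue that if no $U\in\mathcal{F}$ forces $\forall v\,\psi^G$, then for each such $U$ there is an open $W\subseteq U$ and a section $\mu$ on $W$ with $\frak{A}\nVdash_W\psi^G[\mu]$, and that genericity (i) then upgrades ``not forcing'' to ``forcing the negation'' on a member of $\mathcal{F}$. But the $W$ and the witness $\mu$ you obtain this way need not live over any member of $\mathcal{F}$, and clause (i) by itself produces no section. The clean route is to apply clause (i) to the formula $\exists v\,\neg\psi^G$: on the resulting $W\in\mathcal{F}$, either $\frak{A}\Vdash_W\neg\exists v\,\neg\psi^G$, hence $\frak{A}\Vdash_W\forall v\,\neg\neg\psi^G$, and stability of $\psi^G$ gives $\frak{A}\Vdash_W\forall v\,\psi^G$, a contradiction; or $\frak{A}\Vdash_W\exists v\,\neg\psi^G$, and \emph{now} clause (ii) yields $V\in\mathcal{F}$ and $\mu$ on $V$ with $\frak{A}\Vdash_V\neg\psi^G[\mu]$, so by your negation case $\frak{A}[\mathcal{F}]\not\models\psi^G([\mu])$, contradicting the hypothesis. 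With this adjustment your outline goes through.
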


The name of the theorem is due to the fact that the fundamental theorems of Model Theory can be reinterpreted as the construction of generic models over appropriate sheafs (see \cite{caicedo} section 6). Now the approaches of Fitting and Scott can be interpreted as the construction of generic models over some Kripke's Models and Boolean Valued Models respectively,  that are just especial cases of the construction of the cumulative hierarchy of variable sets over a topological space.

\section{The Cumulative Hierarchy of Variable Sets}  

We will construct now the Cumulative Hierarchy of variable sets, we limit ourselves to the construction of the case based over a partial order with the topology of the hereditary subsets; mainly because to prove the independence of the $CH$ this will be enough. Nevertheless this construction can be done over arbitrary topological spaces see \cite{caicedo} or \cite{benavides} section 1.7. We will obtain a sheaf of structures over a partial order $\mathbb{P}$, such that any fiber $V(p)$ will be an intuitionistic model of $ZFC$\footnote{We will see that some  Axioms of $ZFC$ are not forced in its standard form but in alternatively intuitionistic versions that result to be classical equivalent to the standard ones. } that can be collapsed to classical models via the fundamental theorem of Model Theory. The results presented in this section  were originally developed in \cite{villa}. The part concerning the axiom of choice was developed in \cite{rive} and I introduce some results about functions that will be useful in the next section.

\subsection{Definition and Elementary Properties}

To introduce the definition of the cumulative hierarchy we can use the interpretation that categorist use to introduce  the objects of $SET^{\mathbb{P}}$, the advantage here is that the motivation translate literally in the definition; we obtain a truly structure of variable  sets constructed over our classical $\in$ defining  a truly belonging relation between variable sets and not arrows between objects.  Hilbert can rest in peace, even if we will leave Cantor's Paradise we will arrive to Cantor's Heaven avoiding the creepy commutative-diagram purgatory. \\

Using the comprehension axiom of set theory, given a proposition $\varphi(x)$, for any set $A$ we can construct a set $B$ such that $x\in B$ if and only if $x\in A$ and $\varphi(x)$ is "truth" for $x$. As we have seen the notion of truth in our sheaves of structures is not absolute but is based in the extensional truth paradigm. To see how this notion of truth translate when we talk about sets lets consider the next example: Let $\varphi(x)$ be the next proposition \textit{`` x is an even number greater or equal than 4, and, x can be written as the sum of two prime numbers"} . In this moment at my ubication on spacetime $p:=$\textit{``between the 21:00 and the 22:00, of the 5 March 2011, in some place at London"} I cannot assure that $\{z\in\mathbb{N}:\varphi(x)\}=\{x\in \mathbb{N}: x\geq 4 \wedge (x$ is even $)\}$, i.e. that the Goldbach Conjecture is true. However $\varphi$ can be used to get a set at the node $p$,

\[\varphi(p)=\{x:\varphi(x) \text{ holds at the node } p.\}\] 

The Goldbach conjecture has been verified for a huge number of even numbers, and this set of numbers will keep growing each time we will find a new prime number. Probably (maybe not) some day the Goldbach Conjecture will be proved or disproved, but the important feature that follow from this example is that instead of conceiving sets as absolute entities, we can conceive them as variable structures which variate over our \textit{Library of the states of Knowledge}. It is natural then to conceive the set of nodes over our states of Knowledge variates, as nodes in a partial order or points in a topological space that can represent, for example,  the causal structure of spacetime. Our "states Knowledge" will be then structures that represent the sets as we see them in our nodes. Therefore, from each node we will see arise a cumulative Hierarchy of variable sets, which structure will be conditioned by the perception of the variable structures at the other nodes which relate with it. Or more precisely:

\begin{defin} 
Let $( \mathbb{P};\leq )$ be a partial order, for  $p\in
\mathbb{P}$ we define recursively :
\begin{center}{$V_0(p)=\emptyset$}
\end{center}

\begin{center}{
$V_{\lambda}(p)=\bigcup_{\alpha \leq \lambda}V_{\alpha}(p)$ if
$\lambda$ is a limit ordinal.\\}
\end{center}

\begin{center}{$V_{\alpha + 1}(p)=\{f:[p)\rightarrow \bigcup_{q\geq
p}\emph{P}(V_{\alpha}(q)) :$ if $p\leq q \rightarrow  f(q) \subset
V_{\alpha}(q)$, and,  if $p\leq q\leq r \rightarrow \forall g \in f(q)
(g\upharpoonright _{[r)} \in f(r))\}$\\ }
\end{center}

We define then ``the universe at the event p",  as
$V(p)=\bigcup_{\alpha \in On}V_{\alpha}(p)$ and we denote by $V^{\mathbb{P}}$ the sheaf of structures of the variable sets over $\mathbb{P}$ with the topology of the hereditary  subsets.
\end{defin} 
 
For each $p\in \mathbb{P}$ and $\alpha\in On$, $V_{\alpha}(p)$ is a set of functions defined on $[p)$ which values for $q\in [p)$ are sets of functions defined on $[q)$ which values for $r\in [q)$ are sets of functions on $[r)$ and so on. The next step will be to define the belonging relation. Thus in the more natural way we define:

\[\Vdash_pa\in f \text{ (in the node } p, a \text{  belongs to }f \text{ is forced)}\Leftrightarrow a\in f(p),\]

where the $\in$ in the right side is just the classical belonging relation between sets. Now we have defined the belonging relation, our first result will be to show that if the partial order is not trivial, i.e  if it has at lest two related elements, then the cumulative hierarchy $V(p)$, where $p$ is a point which is related  with a different element, is not isomorphic to the classical Cumulative hierarchy $V$.

\begin{lema}
Let $(\mathbb{P},\leq)$ a partial order with at least two different elements related. Let $p, q\in \mathbb{P}$ such that $p\neq q$ and $p\leq q$, then $V(p)$ with the belonging relation defined above is not isomorphic to the Classical Cumulative hierarchy of set theory $V$.
\end{lema}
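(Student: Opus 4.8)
\emph{Proof proposal.} The plan is to show that the Axiom of Extensionality fails in the classical structure $(V(p),\in_p)$, where $\in_p$ denotes the interpretation of $\in$ in the fibre at $p$ given by the belonging relation above, i.e. $a\in_p f$ iff $a\in f(p)$. Since Extensionality is a first order sentence in the language $\{\in\}$, it is invariant under isomorphism of structures, and $(V,\in)$ satisfies it; hence it suffices to produce two distinct members of $V(p)$ having exactly the same $\in_p$-elements. The key idea is precisely the observation that these fibres, taken as classical structures, need not be extensional.

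First I would unwind the bottom levels of the hierarchy over $[p)$. Since $V_0(r)=\emptyset$ and $\mathcal{P}(\emptyset)=\{\emptyset\}$, each $V_1(r)$ is the singleton $\{\mathbf{0}_r\}$, where $\mathbf{0}_r$ is the unique function $[r)\to\{\emptyset\}$ (the identically-$\emptyset$ function). Consequently an element of $V_2(p)$ is a function $f$ assigning to each $r\geq p$ either $\emptyset$ or $\{\mathbf{0}_r\}$, and the coherence clause of the definition, applied to the only possible $g=\mathbf{0}_q\in f(q)$ and using $\mathbf{0}_q\upharpoonright_{[r)}=\mathbf{0}_r$, forces the set $\{\,r\geq p:f(r)\neq\emptyset\,\}$ to be upward closed, i.e. open in the subspace $[p)$; conversely every open subset of $[p)$ determines such an $f$. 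This bookkeeping at the bottom of the hierarchy is the only step requiring genuine care, and is the (mild) technical obstacle of the argument.

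Next I would invoke the hypothesis: since $\mathbb{P}$ has two distinct related elements we are handed $p\neq q$ with $p\leq q$, and then antisymmetry gives $p\notin[q)$ while $[q)$ is a nonempty proper open subset of $[p)$. Let $f_1\in V_2(p)$ be the element attached to the open set $[q)$ (so $f_1(r)=\{\mathbf{0}_r\}$ for $r\geq q$ and $f_1(r)=\emptyset$ otherwise) and let $f_0\in V_2(p)$ be the element attached to $\emptyset$ (the identically-$\emptyset$ function). Then $f_1\neq f_0$, since $f_1(q)=\{\mathbf{0}_q\}\neq\emptyset=f_0(q)$, whereas $f_1(p)=\emptyset=f_0(p)$. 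Hence for every $a\in V(p)$ one has $a\in_p f_1$ iff $a\in f_1(p)=\emptyset$ iff $a\in f_0(p)$ iff $a\in_p f_0$, so $f_1$ and $f_0$ are distinct elements of $V(p)$ with identical $\in_p$-extensions.

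Therefore $(V(p),\in_p)\not\models$ Extensionality while $(V,\in)\models$ Extensionality, and since this sentence passes across isomorphisms, $V(p)$ is not isomorphic to $V$. I would close by noting that the non-triviality hypothesis is used exactly to obtain $[q)\subsetneq[p)$: when $[p)=\{p\}$ (for instance when $p$ is isolated) the construction collapses and $V(p)\cong V$, in agreement with the earlier remark that isolated points recover classical logic.
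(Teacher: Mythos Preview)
Your proof is correct, and it rests on the same concrete computation as the paper's: both arguments unwind $V_2(p)$ and observe that it contains more than the two elements found in the classical $V_2$. The paper simply exhibits three members of $V_2(p)$ --- the constantly-$\emptyset$ function, the constantly-$\{\mathbf{0}_r\}$ function, and a mixed function empty at $p$ but nonempty at $q$ (in your notation: $f_0$, the function attached to the full open set $[p)$, and essentially your $f_1$) --- and concludes from the count $3>2=|V_2|$. Your packaging of the same data is sharper: rather than comparing cardinalities of levels that are defined externally (which, taken literally, would still need a word about why an isomorphism must respect those levels), you single out $f_0$ and $f_1$, note that they share the same empty $\in_p$-extension, and obtain a direct failure of Extensionality, a first-order sentence that transfers across isomorphisms. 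So the underlying observation is identical; your version just makes the non-isomorphism inference fully explicit.
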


\begin{proof} 
 $V_0(p)=\emptyset =V_0(q)$ and
$V_1(p)=\{f:[p)\rightarrow \bigcup_{q\geq p}\{\emptyset \} \}=
\{f:[p)\rightarrow \{ \emptyset \} : f(s)=\emptyset$ for all
$s\} $. As in the classical cumulative hierarchy\footnote{Remember that the classical cumulative hierarchy is constructed in the next way $V_0=\emptyset$, $V_{\alpha +1 }=P(V_\alpha)$ and $V_{\lambda}=\bigcup_{\mu \leq \lambda}V_{\mu}$ if $\lambda$ is a limit ordinal.} $V_1(p)$ has just one element; but in the next step we get
\[V_2(p)=\{V_0\upharpoonright_{[p)},V_1\upharpoonright_{[p)},\{(p,\emptyset),(q,\{f\}),...\}...\}\]
where $f$ is the only element of $V_1(p)$. Thus $V_2(p)$
has at least three elements, but  $V_2$ in the classical hierarchy has just 2.
\end{proof}

On the other hand if a point $p$ is related just with itself or is a maximal point, the classical universe $V$ and $V(p)$ are isomorphic.

\begin{lema}
Let, $(P,\leq)$ a partial order. Let $p\in \mathbb{P}$ such that  $(p,q)\in \leq \Leftrightarrow q=p$ then $V(p)$ is isomorphic to $V$.
\end{lema}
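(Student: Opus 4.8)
The plan is to construct, by recursion on ordinal rank, an explicit class isomorphism $\pi\colon V(p)\to V$ between the fibre at $p$ (carrying its forced membership relation) and the classical cumulative hierarchy.

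First I would record the simplification forced by the hypothesis $(p,q)\in\leq\Leftrightarrow q=p$, namely $[p)=\{p\}$. Thus every $f\in V_{\alpha+1}(p)$ is a function with one-point domain $\{p\}$, hence is completely determined by the single value $f(p)\subseteq V_\alpha(p)$; moreover the coherence clause ``if $p\le q\le r$ then $\forall g\in f(q)\,(g\upharpoonright_{[r)}\in f(r))$'' is vacuously true, since $p\le q\le r$ forces $q=r=p$ and the clause then merely asserts $\forall g\in f(p)\,(g\in f(p))$ (using $g\upharpoonright_{[p)}=g$, as $\mathrm{dom}(g)=[p)=\{p\}$). Consequently the evaluation map $e_\alpha\colon f\mapsto f(p)$ is a bijection $V_{\alpha+1}(p)\to\mathcal{P}(V_\alpha(p))$, and the forced membership reduces to $\Vdash_p g\in f\iff g\in f(p)$. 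I would also use, as one of the elementary properties of the hierarchy, the monotonicity $V_\beta(p)\subseteq V_\alpha(p)$ for $\beta\le\alpha$, so that at limits $V_\lambda(p)=\bigcup_{\beta<\lambda}V_\beta(p)$.

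Now define $\pi_\alpha\colon V_\alpha(p)\to V_\alpha$ by recursion: $\pi_0=\emptyset$; $\pi_\lambda=\bigcup_{\beta<\lambda}\pi_\beta$ at limits; and $\pi_{\alpha+1}(f)=\{\pi_\alpha(g):g\in f(p)\}=\pi_\alpha[f(p)]$, i.e. $\pi_{\alpha+1}=\mathcal{P}(\pi_\alpha)\circ e_\alpha$. That $\pi_{\alpha+1}\upharpoonright V_\alpha(p)=\pi_\alpha$ and that $\pi:=\bigcup_\alpha\pi_\alpha$ is a well-defined class function follow by the obvious induction. Then I would prove simultaneously, by induction on $\alpha$ (the zero and limit cases being immediate from monotonicity and coherence), that (a) $\pi_\alpha$ is injective and (b) $\pi_\alpha[V_\alpha(p)]=V_\alpha$. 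For the successor step of (a): if $f\neq f'$ in $V_{\alpha+1}(p)$ then $f(p)\neq f'(p)$, so some $g$ lies in exactly one of them, say $g\in f(p)\setminus f'(p)$; by the induction hypothesis $\pi_\alpha$ is injective on $V_\alpha(p)$, hence $\pi_\alpha(g)\neq\pi_\alpha(h)$ for every $h\in f'(p)$, so $\pi_\alpha(g)\in\pi_{\alpha+1}(f)\setminus\pi_{\alpha+1}(f')$. For the successor step of (b): ``$\subseteq$'' is immediate from $f(p)\subseteq V_\alpha(p)$ and the induction hypothesis; for ``$\supseteq$'', given $x\in V_{\alpha+1}=\mathcal{P}(V_\alpha)$, put $S=\{g\in V_\alpha(p):\pi_\alpha(g)\in x\}\subseteq V_\alpha(p)$, and let $f$ be the function on $[p)=\{p\}$ with $f(p)=S$; then $f\in V_{\alpha+1}(p)$, and since $\pi_\alpha$ is a bijection of $V_\alpha(p)$ onto $V_\alpha$ (parts (a),(b) at level $\alpha$) we get $\pi_{\alpha+1}(f)=\pi_\alpha[S]=x$.

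Finally, $\pi=\bigcup_\alpha\pi_\alpha$ is the desired isomorphism of $\{\in\}$-structures: it is a bijection $V(p)\to V$ by (a) and (b), and it preserves and reflects membership, since for $f\in V_{\alpha+1}(p)$ one has $\Vdash_p g\in f\iff g\in f(p)\iff\pi_\alpha(g)\in\pi_\alpha[f(p)]=\pi(f)$, the middle equivalence being injectivity of $\pi_\alpha$ on $V_\alpha(p)$. The only steps that need genuine (though still routine) attention are the two simultaneous inductions above — in particular the surjectivity clause (b), where one reconstructs an element of $V_{\alpha+1}(p)$ from an arbitrary subset of $V_\alpha$ and must check it really meets the defining conditions of the hierarchy — together with the bookkeeping at limit stages, where one must make sure the union definition of $V_\lambda(p)$ matches $\bigcup_{\beta<\lambda}V_\beta(p)$ so that $\pi_\lambda$ is determined. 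I do not expect any real obstacle: the content is simply that a one-point up-set collapses the variable-set construction back to the ordinary power-set operation, so $V(p)$ is literally $V$ with its elements relabelled as constant functions on $[p)$.
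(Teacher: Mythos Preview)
Your proof is correct and follows essentially the same approach as the paper: both define the collapse map by the recursion $\pi(f)=\{\pi(g):g\in f(p)\}$ and prove injectivity by induction on rank. The only cosmetic difference is that you organise the argument level-by-level (proving $\pi_\alpha\colon V_\alpha(p)\to V_\alpha$ is a bijection by simultaneous induction), whereas the paper defines $\psi$ globally and establishes surjectivity by exhibiting the explicit inverse $B\mapsto\widehat{B}$ with $\widehat{B}(p)=\{\widehat{a}:a\in B\}$ --- which is exactly your preimage construction $S=\pi_\alpha^{-1}[x]$ read the other way around.
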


\begin{proof}
Let $\psi:V(p)\rightarrow V$ given by 
\[\psi(V_0)=\emptyset\]\[\psi(g)=\{\psi(h):h\in g(p)\}.\] 
From the definition follows that  $\Vdash_p f \in g$ iff $f\in g(p)$ iff $\psi(f)\in \psi(g)$ then $\psi$ is a morphism. We have to see now that $\psi$ is also a bijection. Let $ran_p(f)=min\{\alpha \in On: f\in V_{\alpha+1}(p)\}$ the rank of $f$ at $p$.. If $\Vdash_p x\in f$ then $x\in f(p)$, and if $\alpha=rank_p(f)$ we have $f\in V_{\alpha +1}(p)$, therefore $x\in f(p)\subset V_{\alpha}(p)$ thus 
\begin{equation}
rank_p(x)< rank_p{f}=\alpha.\label{rank}
\end{equation}
 Using this last result we want to show that $\psi$ is injective by induction on the rank. Our induction hypothesis will be the following one: Given $\alpha$, for $x, y \in V(p)$ if $x\neq y$ and $rank_p(x)<\alpha$, $rank_p(y)<\alpha$, then $\psi(x)\neq \psi(y)$. If $\alpha=0$ there is nothing to prove.  Lets suppose then that the result holds for $\alpha>0$. Let $f,g\in V(p)$ if $\psi(f)=\psi(g)$ then $\{\psi(h):h\in f(p)\}=\{\psi(m): m\in g\}$, we have then that given $h\in f(p)$ there exists $m\in G(p)$ such that $\psi(h)=\psi(m)$, but by \ref{rank} we have that the rank of $m$ and $h$ is less than $\alpha$ thus by the inductive hypothesis $h=m$ and we can conclude that $f=g$.\\
On the other hand given an arbitrary set $B$, let
 \[\widehat{B}:[p)=\{p\}\rightarrow\bigcup_{q\geq
p}V(q)=V(p)\]\[\widehat{B}(p)=\{\widehat{a}:a\in B\}\]
Thus $\widehat{\emptyset}(p)=\emptyset$. By induction on the rank we want to show that $\psi(\widehat{B})=B$. If $rank_p(B)=0$ then $B=\emptyset=\widehat{\emptyset }(p)$. Let $B$ such that $rank_p(B)=\alpha>0$, we have  

\begin{eqnarray*}
\psi(\widehat{B})&=&\{\psi(h):h\in\widehat{B}(p)\}\\
&=&\{\psi(h):h\in\{\widehat{b}:b\in B\}\}\\
&=&\{\psi(\widehat{b}):b\in B\}\\
&=&\{b:b\in B\}=B \text{(by the inductive hypothesis)}
\end{eqnarray*}
\end{proof}

Note that  the rank does not grow with the order,  if $p<q$ then $rank_p(f)\geq  rank_q(f\upharpoonright _{[q)} $ because if $f\in V_{\alpha}(p)$ then $f\upharpoonright _{[q)}\in V_{\alpha}(q)$ by the second condition in the definition of the hierarchy. We have also that $V_{\alpha}\upharpoonright_{[p)}\in V_{\alpha+1}(p)$ and:

\begin{lema} Given  $p\in\mathbb{P}$  we have \[ V_{0}(p)\subset
V_{1}(p)\subset V_{2}(p)\subset...\subset V_{\beta}(p)\subset
V_{\beta+1}(p)\subset...\]
\end{lema}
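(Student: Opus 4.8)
The plan is to prove the chain of inclusions $V_\alpha(p)\subseteq V_{\alpha+1}(p)$ for all ordinals $\alpha$ by transfinite induction on $\alpha$, simultaneously showing that the family $(V_\alpha(p))_\alpha$ is monotone (i.e. $V_\alpha(p)\subseteq V_\beta(p)$ whenever $\alpha\le\beta$), since the two statements feed into each other at limit stages. The base case $V_0(p)=\emptyset\subseteq V_1(p)$ is immediate. For the successor step, assuming $V_\alpha(p)\subseteq V_{\alpha+1}(p)$ (and more generally $V_\beta(q)\subseteq V_{\alpha+1}(q)$ for the relevant $\beta\le\alpha$ and all $q\ge p$), I would take an arbitrary $f\in V_{\alpha+1}(p)$ and verify that $f\in V_{\alpha+2}(p)$: $f$ is a function on $[p)$ with $f(q)\subseteq V_\alpha(q)\subseteq V_{\alpha+1}(q)$ for each $q\ge p$ by the inductive hypothesis applied fiberwise, and the coherence condition ``$p\le q\le r\implies \forall g\in f(q)\,(g\upharpoonright_{[r)}\in f(r))$'' is exactly the same condition required for membership in $V_{\alpha+2}(p)$, so it transfers verbatim. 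Hence $f\in V_{\alpha+2}(p)$.

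For the limit step, if $\lambda$ is a limit ordinal then by definition $V_\lambda(p)=\bigcup_{\alpha\le\lambda}V_\alpha(p)$, and one must check $V_\lambda(p)\subseteq V_{\lambda+1}(p)$. Here I would use that each $V_\alpha(p)$ with $\alpha<\lambda$ is contained in $V_{\alpha+1}(p)$ (successor case, already done) and then argue that any $f\in V_\alpha(p)$ actually lies in $V_{\lambda+1}(p)$; this requires knowing that an element of $V_{\alpha+1}(p)$ is also an element of $V_{\lambda+1}(p)$, which again reduces to the fiberwise monotonicity $V_\alpha(q)\subseteq V_\lambda(q)$ plus the observation that the coherence condition is insensitive to which stage we are at. The monotonicity $V_\alpha(p)\subseteq V_\beta(p)$ for $\alpha\le\beta$ then follows from the chain of successor inclusions together with the definition at limits, composing inclusions transfinitely.

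I would present this compactly: first establish the auxiliary claim that $f\in V_{\alpha+1}(p)$ and $\alpha\le\beta$ imply $f\in V_{\beta+1}(p)$ (by induction on $\beta$, using the successor and limit analyses above and the remark already made in the text that $f\in V_\alpha(p)$, $p<q$ implies $f\upharpoonright_{[q)}\in V_\alpha(q)$), and then the stated chain $V_0(p)\subseteq V_1(p)\subseteq\cdots$ is the special case $\beta=\alpha$, i.e. $V_\alpha(p)\subseteq V_{\alpha+1}(p)$ at every stage, together with transitivity of $\subseteq$ through limits. The main (though modest) obstacle is bookkeeping the double induction cleanly at limit ordinals, making sure the fiberwise inclusions at all $q\ge p$ are available before invoking them; there is no real analytic or combinatorial difficulty, since the defining coherence condition is preserved automatically under enlarging the ambient stage.
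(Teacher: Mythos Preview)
Your proposal is correct and follows essentially the same approach as the paper: a transfinite induction on $\alpha$ establishing $V_\alpha(p)\subseteq V_{\alpha+1}(p)$, using the fiberwise inductive hypothesis $V_\gamma(q)\subseteq V_{\gamma+1}(q)$ for all $q\geq p$ in the successor step, and in the limit step picking $\beta<\lambda$ with $f\in V_{\beta+1}(p)$ and then using $V_\beta(q)\subseteq V_\lambda(q)$ to conclude $f\in V_{\lambda+1}(p)$. Your extra packaging (the simultaneous monotonicity and the auxiliary claim $f\in V_{\alpha+1}(p),\ \alpha\le\beta \Rightarrow f\in V_{\beta+1}(p)$) is sound but more than the paper actually writes out; the paper simply runs the single induction on $\alpha$ and invokes the inductive hypothesis at all fibers $q\geq p$ without isolating a separate monotonicity lemma.
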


\begin{proof} By induction  on $\alpha$  we will see that $V_{\alpha}(p)\subset V_{\alpha+1}(p)$:\\
1. If  $\alpha=0$ is evident.\\
2. If $\alpha$ is a limit ordinal and  $f\in
V_{\alpha}(p)=\bigcup_{\beta<\alpha}V_{\beta}(p)$; then
$range(f)\subset\bigcup_{q\geq p}\textsl{P}(V_{\beta}(q))$ for some $\beta<\alpha$. Since 
$V_{\beta}(q)\subset\bigcup_{\gamma<\alpha}V_{\gamma}(q)=V_{\alpha}(q)$.
and  $range(f)\subset\bigcup_{q\geq
p}\textsl{P}(V_{\alpha}(q))$, we have
$f(q)\subset V_{\alpha}(q)$. The second condition of the definition follows from the fact that  $f\in V_{\xi}(p)$ for some $\xi<\alpha$ and by the inductive hypothesis, $f\in V_{\xi+1}(p)$. Thus $f\in V_{\alpha+1}(p)$.\\
3. If  $\alpha=\gamma+1$ y $f\in V_{\alpha}(p)$, then
$range(f)\subset\bigcup_{q\geq p}\textsl{P}(V_{\gamma}(q))$. By the inductive hypothesis $V_{\gamma}(q)\subset
V_{\gamma+1}(q)=V_{\alpha}(q)$, for all $q\geq p$, thus
$\bigcup_{q\geq p}\textsl{P}(V_{\gamma}(q))\subset\bigcup_{q\geq
p}\textsl{P}(V_{\alpha}(q))$, then, $imagen(f)\subset\bigcup_{q\geq
p}\textsl{P}(V_{\alpha}(q))$. Again we can conclude $f\in V_{\alpha+1}(p)$.
\end{proof}

Even if generally $V(p)$ and $V$ are not isomorphic there is an standard way to embed $V$ in $V(p)$. Given an arbitrary set $a$ let 

\[\widehat{a}(p):[p)\rightarrow\bigcup_{q\geq
p}V(q)\]
\[\widehat{a}(p)(q)=\{\widehat{b}(p)\upharpoonright_{[q)}:b\in
a\}\]

\begin{lema} 
Given an arbitrary set $a$, $p,q\in\mathbb{P}$ such that $q\geq p$ we have
\[\widehat{a}(p)\upharpoonright_{[q)}=\widehat{a}(q)\]
\end{lema}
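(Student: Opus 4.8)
The plan is to argue by $\in$-induction on the set $a$ (equivalently, by induction on the rank of $a$ in the classical cumulative hierarchy $V$), proving the slightly stronger statement that $\widehat{b}(p')\upharpoonright_{[q')}=\widehat{b}(q')$ holds for \emph{every} comparable pair $p'\leq q'$ in $\mathbb{P}$, uniformly in such pairs, as $b$ ranges over the elements of $a$. The base case is $a=\emptyset$: here $\widehat{\emptyset}(p)$ is the function on $[p)$ that is identically $\emptyset$, so its restriction to $[q)$ is the function on $[q)$ that is identically $\emptyset$, which is exactly $\widehat{\emptyset}(q)$.

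For the inductive step, first note that since $q\geq p$ we have $[q)\subseteq[p)$, so both $\widehat{a}(p)\upharpoonright_{[q)}$ and $\widehat{a}(q)$ are functions with domain $[q)$; it therefore suffices to check that they agree at every $r\in[q)$. Fix such an $r$; since $p\leq q\leq r$ we also have $r\in[p)$, and by the defining recursion for $\widehat{\;\cdot\;}$,
\[
(\widehat{a}(p)\upharpoonright_{[q)})(r)=\widehat{a}(p)(r)=\{\widehat{b}(p)\upharpoonright_{[r)}:b\in a\},\qquad
\widehat{a}(q)(r)=\{\widehat{b}(q)\upharpoonright_{[r)}:b\in a\}.
\]
Now for each $b\in a$ the inductive hypothesis, applied to the pairs $p\leq r$ and $q\leq r$, gives $\widehat{b}(p)\upharpoonright_{[r)}=\widehat{b}(r)=\widehat{b}(q)\upharpoonright_{[r)}$, so the two sets displayed above both equal $\{\widehat{b}(r):b\in a\}$. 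Hence $(\widehat{a}(p)\upharpoonright_{[q)})(r)=\widehat{a}(q)(r)$ for all $r\in[q)$, and the two functions coincide.

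There is no serious obstacle here; the only point that requires a little care is to phrase the induction so that the hypothesis is available for \emph{all} comparable pairs $p'\leq q'$ and not merely for the fixed pair $p\leq q$, since in the inductive step it is invoked for the auxiliary pairs $(p,r)$ and $(q,r)$. One should also keep track of the chain of inclusions $[r)\subseteq[q)\subseteq[p)$ when iterating restrictions, and, if one wants to be thorough, observe in passing that each $\widehat{a}(p)$ is a legitimate element of $V(p)$ — this follows by the same induction from the second clause in the definition of the hierarchy — although this fact is not strictly needed for the displayed identity.
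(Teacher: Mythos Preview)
Your proof is correct and follows essentially the same route as the paper: induction on the rank of $a$, with the base case $a=\emptyset$ immediate and the inductive step unwinding both sides at an arbitrary $r\geq q$ to the common value $\{\widehat{b}(r):b\in a\}$ via the hypothesis applied to the pairs $(p,r)$ and $(q,r)$. Your explicit remark that the induction must be set up uniformly over all comparable pairs is a welcome clarification that the paper leaves implicit.
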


\begin{proof} 
We prove the result by induction on the rank of $a$ in the classical cumulative hierarchy:\\
i. Si $rank(a)=0$,  $a=\emptyset$, then
\[\widehat{\emptyset}(p)(r)=\emptyset\] for all $r\geq q\geq p$, and
\[\widehat{\emptyset}(q)(r)=\emptyset=\widehat{\emptyset}(p)\upharpoonright_{[q)}(r)\]\\
ii. Given $\alpha>0$ suppose the result holds for any set with rank less than $\alpha$. Let $a$ be a set with $rank(a)=\alpha$. Thus, given $r\geq q\geq p$
\begin{eqnarray*}
\widehat{a}(p)\upharpoonright_{[q)}(r)&=&\{\widehat{b}(p)\upharpoonright_{[r)}:b\in
a\}\\
&=&\{\widehat{b}(r):b\in a\}\text{ (by the inductive hypothesis)}\\
&=&\{\widehat{b}(q)\upharpoonright_{[r)}:b\in a\}\text{ (by the inductive hypothesis)}\\
&=&\widehat{a}(q)(r)
\end{eqnarray*}
\end{proof}

\begin{corol}\label{redef}
\[\widehat{a}(p)(q)=\{\widehat{b}(q):b\in a\}\]
\end{corol}

It should be clear now how we should define the embedding of $V$ in $V(p)$.

\begin{teor}
 \[\Psi:V\rightarrow V_(p)\]
\[\Psi(a)=\widehat{a}(p)\]
is a monomorphism. 
\end{teor}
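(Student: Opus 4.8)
The plan is to show that $\Psi$ is injective and a homomorphism for $\in$; that it preserves $=$ is immediate since $\Psi$ is literally a function. The homomorphism part is essentially Corollary \ref{redef}: for sets $a,b$ we have $\Vdash_p \widehat{b}(p)\in\widehat{a}(p)$ iff $\widehat{b}(p)\in\widehat{a}(p)(p)=\{\widehat{c}(p):c\in a\}$, so I must check that $\widehat{b}(p)=\widehat{c}(p)$ forces $b=c$ — which is exactly injectivity. Hence the whole statement reduces to proving that $a\mapsto\widehat{a}(p)$ is injective, after which the biconditional $b\in a \Leftrightarrow \Vdash_p \widehat{b}(p)\in\widehat{a}(p)$ follows and $\Psi$ is a monomorphism of $\tau$-structures (where $\tau$ here is the language $\{\in\}$ of set theory).

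For injectivity I would argue by induction on $\max(\mathrm{rank}(a),\mathrm{rank}(b))$ in the classical cumulative hierarchy, mirroring the injectivity argument already used in the proof that $V(p)\cong V$ when $p$ is maximal. Suppose $\widehat{a}(p)=\widehat{b}(p)$; evaluating at $p$ gives $\{\widehat{c}(p):c\in a\}=\{\widehat{d}(p):d\in b\}$. Given $c\in a$, pick $d\in b$ with $\widehat{c}(p)=\widehat{d}(p)$; since $\mathrm{rank}(c)<\mathrm{rank}(a)$ and $\mathrm{rank}(d)<\mathrm{rank}(b)$, the induction hypothesis yields $c=d$, so $c\in b$. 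By symmetry $a=b$. The base case $\mathrm{rank}=0$ is trivial since then $a=b=\emptyset$.

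It remains to note that $\Psi$ genuinely lands in $V(p)$ and respects the structure on the nose: Corollary \ref{redef} (together with the preceding lemmas establishing $\widehat{a}(p)\!\upharpoonright_{[q)}=\widehat{a}(q)$) guarantees that $\widehat{a}(p)$ satisfies the two defining clauses of the hierarchy, so $\widehat{a}(p)\in V_{\mathrm{rank}(a)+1}(p)\subseteq V(p)$, by a routine induction on $\mathrm{rank}(a)$. I do not expect any real obstacle here; the only point requiring care is bookkeeping the ranks so that the inductive hypothesis is available for all the $\widehat{c}(p)$ with $c\in a$, and making sure the equality forced at the node $p$ is the honest set-theoretic equality of functions, which it is by the definition of $\Vdash_p$ on atomic formulas and the fact that $[p)$ has $p$ as its least element. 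One could also phrase the argument more slickly: the map $\psi$ of the earlier lemma, restricted appropriately, is a two-sided inverse to $\Psi$ on its image, giving injectivity for free; I would mention this as an alternative but carry out the direct induction.
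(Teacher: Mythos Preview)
Your proposal is correct and follows essentially the same route as the paper: the paper also breaks the argument into injectivity (by induction on rank, exactly as you do), the $\in$-preservation step (forward by Corollary~\ref{redef}, backward by injectivity), and the verification that $\widehat{a}(p)\in V(p)$ via induction on rank using the restriction lemma. The only cosmetic difference is that the paper carries out the three steps explicitly rather than observing first that the morphism clause reduces to injectivity, and it does not mention your alternative via a left inverse.
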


\begin{proof}
Step 1:  $a\neq b$ iff $\widehat{a}(p)\neq \widehat{b}(p)$ ($\Psi$ is injective).\\
We proceed by induction on the rank of $a$ and $b$. If $rank(a)=0=rank(b)$, $a=\emptyset=b$ then the result follows because the premise is false in both directions. Now consider $c$ and $d$ two sets with rank $\alpha>0$.  If 
\[\widehat{c}(p)(q)=\{\widehat{x}(q):x\in c\}=\{\widehat{y}(q):y\in d\}=\widehat{d}(p)(q)\]
for all $q\geq p$, then for $x\in c$, there exists $y\in d$ such that $\widehat {x}(q)=\widehat{y}(q)$, since $x$ and $y$ are elements of $c$ and $d$ respectively, their rank is less than $\alpha$; thus by the inductive hypothesis we can conclude that $x=y$ and then $c=d$.\\
Step 2: $b\in a$ iff $\widehat{b}(p)\in \widehat{a}(p)(p)$ ($\Psi$ is a morphism).\\
If $a\in b$ by the corollary \ref{redef} $\widehat{a}(p)\in \widehat{b}(p)(p)$. On the other hand if $\widehat{a}(p)\in\widehat{b}(p)(p)=\{\widehat{c}(p):c\in b\}$ then $\widehat{a}(p)=\widehat{c}(p)$ for some  $c\in b$,  but by step 1 this implies  $a=c$, thus $a\in b$.\\
Step 3: $\widehat{a}(p)\in V(p)$.\\
Again, we proceed by induction on the classical hierarchy:\\
 If the rank of  $a$ is zero,  $a=\emptyset$ and
\[\widehat{\emptyset}(p):[p)\rightarrow\bigcup_{q\geq p}V(q)\]
\[\widehat{\emptyset}(p)(q)=\emptyset.\]
Therefore $\widehat{\emptyset}(p)\in V_1(p)$.\\
Let $\alpha>0$ and suppose that for every $b$ with rank less than
 $\alpha$, $\widehat{b}(p)\in V(p)$. Let $a$ be a set such that $rank(a)=\alpha$.
\[\widehat{a}(p):[p)\rightarrow\bigcup_{q\geq
p}V(q)\]
\[\widehat{a}(p)(q)=\{\widehat{b}(p)\upharpoonright_{[q)}:b\in
a\}\]  
We see that  $\widehat{a}(p)(q)\subset V(q)$ because given
$b\in a$ as $ran(b)<ran(a)$, by the inductive hypothesis 
$\widehat{b}(p)\in V(p)$ and then
$\widehat{b}(p)\upharpoonright_{[q)}\in V(q)$. On the other hand given
$r\geq q\geq p$ and $g\in \widehat{a}(p)(q)$, we have that 
$g=\widehat{b}(q)$, for some $b\in a$;  therefore 
$\widehat{b}(q)\upharpoonright_{[r)}=\widehat{b}(r)\in
\widehat{a}(p)(r)$. From this $\widehat{a}(p)$ satisfy the conditions in the definition of the cumulative hierarchy and we can conclude that $\widehat{a}(p)\in V(p)$.
\end{proof}

\subsection{ZFC in $V(p)$}
 
Now, we want to show that the axioms of Zermelo-Fraenkel plus the Axiom of choice are forced on $V(p)$ for each $p\in\mathbb{P}$. Most of the axioms will be forced in its classical standard definition, nevertheless the Axiom of Choice and the Axiom of Foundation will be forced in some  intuitionistic weaker versions that are classical equivalent to the standard ones. We will see also  how some fundamental constructions of set theory work on $V(p)$.\\

\subsection*{ \normalsize{Axiom of Existence: \textit{There exists a set without elements.}}} 

\begin{lema}
\[\Vdash_{p} \exists x \forall y(y\notin x)\] for every $p\in\mathbb{P}$
\end{lema}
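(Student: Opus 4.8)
The plan is to exhibit an explicit section of $V^{\mathbb{P}}$ that, at the node $p$, behaves like the empty set, and then check the forcing clauses for $\exists$ and $\forall$ against the definition of the belonging relation. The natural candidate is $\widehat{\emptyset}(p)$, the canonical image of the classical $\emptyset$ under the embedding $\Psi$ from the previous subsection; recall that $\widehat{\emptyset}(p)\colon[p)\to\bigcup_{q\geq p}V(q)$ is the function with $\widehat{\emptyset}(p)(q)=\emptyset$ for every $q\geq p$, and that $\widehat{\emptyset}(p)\in V_{1}(p)\subseteq V(p)$, so it is a legitimate element of the fiber over $p$.

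First I would unwind what needs to be forced. By clause~6 of the forcing definition, $\frak{A}\Vdash_{p}\exists x\,\forall y(y\notin x)$ holds iff there is a section $\sigma$ defined at $p$ with $\frak{A}\Vdash_{p}\forall y(y\notin \sigma)$. Take $\sigma=\widehat{\emptyset}(p)$. By clause~7, $\Vdash_{p}\forall y(y\notin\sigma)$ holds iff there is an open (hereditary) neighbourhood $U$ of $p$ — here the basic one $U=[p)$ — such that for every $q\in[p)$ and every section $\mu$ defined at $q$ we have $\Vdash_{q}\,\mu\notin\sigma\!\restriction_{[q)}$. Now $\mu\notin\sigma'$ is $\neg(\mu\in\sigma')$, and by clause~4 this is forced at $q$ iff there is a neighbourhood $U'\ni q$ with $\nVdash_{r}\mu\in\sigma'$ for all $r\in U'$; again take $U'=[q)$.

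The crux, then, is the following elementary computation: for every $q\geq p$, every $r\geq q$, and every section $\mu$ defined at $r$, we have $\nVdash_{r}\mu\in\widehat{\emptyset}(p)\!\restriction_{[r)}$. But by definition $\Vdash_{r}\mu\in g$ iff $\mu(r)\in g(r)$ (classical membership), and $\bigl(\widehat{\emptyset}(p)\!\restriction_{[r)}\bigr)(r)=\widehat{\emptyset}(p)(r)=\emptyset$; since nothing is a classical member of $\emptyset$, the relation $\mu(r)\in\emptyset$ fails, so indeed $\nVdash_{r}\mu\in\widehat{\emptyset}(p)\!\restriction_{[r)}$. This is really the only substantive step, and it is essentially immediate from the two definitions; the rest is bookkeeping with the basic neighbourhoods $[q)$ of the hereditary-subset topology and checking that all the domain-inclusion side conditions "($U\subseteq\bigcap_i \mathrm{dom}(\sigma_i)$)" in clauses~4 and~7 are met, which they are since $\mathrm{dom}(\widehat{\emptyset}(p))=[p)$ and $[q)\subseteq[p)$ for $q\geq p$. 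I expect no real obstacle: the only thing to be careful about is keeping straight the distinction between the object-level "$\in$" being forced and the metatheoretic classical "$\in$" used to define forcing of atomic membership, and ensuring the restriction $\widehat{\emptyset}(p)\!\restriction_{[q)}$ is again of the same canonical form — which is exactly the content of the lemma giving $\widehat{a}(p)\!\restriction_{[q)}=\widehat{a}(q)$, applied with $a=\emptyset$.
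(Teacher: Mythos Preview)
Your proposal is correct and follows the same route as the paper: the paper's one-line proof simply takes $\widehat{\emptyset}(p)$ as witness and observes that for every $q\geq p$, every $f\in V(q)$ and every $r\geq q$ one has $\nVdash_r f\in\widehat{\emptyset}(r)$ because $\widehat{\emptyset}(r)(r)=\emptyset$. Your version merely unwinds the $\exists$, $\forall$ and $\neg$ clauses explicitly and records the side conditions on domains, which the paper leaves implicit.
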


\begin{proof}
For every $q\geq p$ and $f\in V(q)$ if $r\geq q$, $\nVdash f\in \widehat{\emptyset}(r)$ because $\widehat{\emptyset}(r)(r)=\emptyset$
\end{proof}

\subsection*{\normalsize{Axiom of Extensionality: \textit{Two sets are equal if they have the same elements.}}}

\begin{lema}
 \[\Vdash_p \forall x\forall y (\forall z(z\in x\leftrightarrow z\in y)\rightarrow x=y)\]
\end{lema}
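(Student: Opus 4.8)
The plan is to unfold the forcing clauses for $\forall$, $\to$ and $\in$, exploiting the special features of the topology $\mathbb{P}^{+}$, until the statement collapses to the ordinary Axiom of Extensionality applied fibrewise inside $V$. First I would record two facts about the space $X=(\mathbb{P},\mathbb{P}^{+})$. The basic open set $[q)=\{r\in\mathbb{P}:r\geq q\}$ is the \emph{smallest} open neighbourhood of $q$, since any hereditary set containing $q$ already contains every $r\geq q$. Consequently, in clauses 4, 5 and 7 of the forcing definition the phrase ``there exists a neighbourhood $U$ of $x$ such that for all $y\in U$'' may be replaced throughout by ``for all $y\geq x$''; in particular, for $f,g\in V(q)$,
\[\frak{A}\Vdash_q(\varphi\to\psi)[f,g]\ \Leftrightarrow\ \text{for all }s\geq q:\ \frak{A}\Vdash_s\varphi[f,g]\ \Rightarrow\ \frak{A}\Vdash_s\psi[f,g],\]
while $\frak{A}\Vdash_q\forall v\,\varphi(v,\dots)$ iff $\frak{A}\Vdash_s\varphi[\mu,\dots]$ for every $s\geq q$ and every $\mu\in V(s)$ (identifying sections on $[s)$ with elements of $V(s)$). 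I will also use $\Vdash_q a\in f\Leftrightarrow a\in f(q)$ and $\Vdash_q f=g\Leftrightarrow f=g$ in $V(q)$, together with the triviality $(f\upharpoonright_{[s)})(s)=f(s)$ for $f\in V(q)$, $s\geq q$.

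With these in hand, the first real step is a reduction. Unfolding the two outer universal quantifiers with the simplified clause 7, the assertion $\Vdash_{p}\forall x\forall y(\forall z(z\in x\leftrightarrow z\in y)\to x=y)$ becomes: for every $s\geq p$ and all $a,b\in V(s)$, $\frak{A}\Vdash_{s}(\forall z(z\in a\leftrightarrow z\in b)\to a=b)$. Applying the simplified clause 5 at $s$, and recalling that $[s)$ is its own smallest neighbourhood, it is enough to prove the following claim: for every node $s$ and all $a,b\in V(s)$, if $\frak{A}\Vdash_{s}\forall z(z\in a\leftrightarrow z\in b)$ then $a=b$ as elements of $V(s)$ --- the atomic clause then gives $\Vdash_{s}a=b$.

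To establish the claim I would fix such $s,a,b$, assume the hypothesis, and unfold it as well. Reading the biconditional as the conjunction of its two implications and using the simplified clauses 7 and 5 once more, the hypothesis says exactly that for every $u\geq s$ and every $c\in V(u)$ one has $\frak{A}\Vdash_{u}c\in a\upharpoonright_{[u)}$ if and only if $\frak{A}\Vdash_{u}c\in b\upharpoonright_{[u)}$; via the membership clause and $(a\upharpoonright_{[u)})(u)=a(u)$ this reads $c\in a(u)\Leftrightarrow c\in b(u)$. Now fix $u\geq s$: choosing $\alpha$ with $a\in V_{\alpha+1}(s)$ gives $a(u)\subseteq V_{\alpha}(u)\subseteq V(u)$, and likewise $b(u)\subseteq V(u)$, so $a(u)$ and $b(u)$ are two subsets of $V(u)$ with the same members; the (classical) Axiom of Extensionality in $V$ then forces $a(u)=b(u)$. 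Since $u\geq s$ was arbitrary, the functions $a$ and $b$ agree on all of $[s)$, i.e. $a=b$ in $V(s)$, whence $\frak{A}\Vdash_{s}a=b$, which completes the reduction and the proof.

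The genuinely set-theoretic content here is essentially nil: extensionality in $V(p)$ is merely extensionality in $V$ read fibre by fibre. The step I expect to be the only source of friction is the bookkeeping around the intensional, Kripke-style readings of $\to$ and $\forall$ --- both of which look forward to \emph{all} larger nodes --- namely getting the quantifier collapse right (using that $[q)$ is the least open neighbourhood of $q$, so the ``there exists a neighbourhood'' in clauses 4, 5, 7 becomes a universal quantifier over $\{r:r\geq q\}$) and carrying the harmless restrictions $f\upharpoonright_{[q)}$ through the computation via $(f\upharpoonright_{[q)})(q)=f(q)$.
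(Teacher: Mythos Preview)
Your proof is correct and follows essentially the same route as the paper: unfold the Kripke clauses so that the hypothesis at a node $q$ (your $s$) yields $c\in f(r)\Leftrightarrow c\in g(r)$ for all $r\geq q$ and all $c$, apply classical extensionality in the external model to get $f(r)=g(r)$ fibrewise, and conclude $f=g$ on $[q)$. Your write-up is considerably more explicit about the quantifier unfolding (using that $[q)$ is the least open neighbourhood) and adds the small but pertinent observation that $a(u),b(u)\subseteq V(u)$ so the quantification over $c\in V(u)$ really does catch every member; the paper's version compresses all of this into two lines.
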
  
 
\begin{proof}
let $q\geq p$ and $f,g\in V(q)$, suppose that $z\in f(r)$ ($\Vdash_r z\in f$) if and only if $z\in g(r)$ ($\Vdash_r z\in g$) where $r\geq q\geq p$. Thus,  by extensionality on the external model,  $f(r)=g(r)$. Therefore  for every $r\geq q\geq p$ we have $\Vdash_r f=g$.
\end{proof} 

\subsection*{\normalsize{Axiom of Comprehension: \textit{ Let $\varphi(x,z,w_1,...,w_n)$ be a formula. For every set $z$ there exists a set $y$ such that, $x\in y$ if and only if $x\in z$ and $\varphi(x)$ holds for $x$ }}}

\begin{lema}
\[\Vdash_p \forall z\forall w_1,...,w_n \exists y \forall x(x\in y \leftrightarrow x\in z \wedge \varphi)\]
\end{lema}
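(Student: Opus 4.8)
The plan is to construct, for each $p \in \mathbb{P}$, a witness $y \in V(p)$ for the comprehension instance, imitating the classical construction but tracking the extensional/intuitionistic subtleties introduced by the forcing relation $\Vdash$. Fix $q \geq p$, sections $w_1,\dots,w_n, z \in V(q)$; I must produce $y$ defined on $[q)$ with $\Vdash_q \forall x(x \in y \leftrightarrow x \in z \wedge \varphi)$. The natural candidate is, for each $r \geq q$,
\[
y(r) = \{\, f \in z(r) : \frak{A}\Vdash_r \varphi(f, z, w_1,\dots,w_n) \,\}.
\]
Here I would use $z(r) \subseteq V_\alpha(r)$ where $\alpha = \operatorname{rank}_q(z)$, so that $y(r) \subseteq z(r) \subseteq V_\alpha(r)$, which already shows $y$ will land in $V_{\alpha+1}(q)$ once the coherence condition is checked. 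The key point making this work is the local‑truth property (Corollary~\ref{localtruth}): $\frak{A}\Vdash_r \varphi$ is equivalent to $\varphi$ being forced on a neighbourhood of $r$, i.e. the set $\{r : \Vdash_r \varphi\}$ is open and upward closed, which is exactly what is needed for $f \in y(r) \Rightarrow f\upharpoonright_{[s)} \in y(s)$ for $s \geq r$.

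**Next I would** verify the two coherence conditions in the definition of $V_{\alpha+1}(q)$: that $y(r) \subseteq V_\alpha(r)$ (immediate from $y(r)\subseteq z(r)$ and $z \in V_{\alpha+1}(q)$), and that for $q \leq r \leq s$ and $g \in y(r)$ we have $g\upharpoonright_{[s)} \in y(s)$. For the latter: since $z \in V_{\alpha+1}(q)$, already $g \in z(r)$ gives $g\upharpoonright_{[s)} \in z(s)$; and since $\Vdash_r \varphi(g,\dots)$, Corollary~\ref{localtruth} (monotonicity of forcing up the order, which is the hereditary/open nature of the truth set) gives $\Vdash_s \varphi(g\upharpoonright_{[s)},\dots)$ — one must be slightly careful that the sections $z, w_i$ also get restricted, but restriction commutes with $\Vdash$ by the same corollary. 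Hence $y \in V(q)$. Then I would check the biconditional: for any $r \geq q$ and any section $f$ defined on a neighbourhood of $r$, $\Vdash_r f \in y$ means $f(r') \in y(r')$ for the relevant $r'$... more precisely by the definition of $\Vdash$ for atomic membership, $\Vdash_r f \in y \Leftrightarrow f \in y(r) \Leftrightarrow f \in z(r) \text{ and } \Vdash_r \varphi \Leftrightarrow \Vdash_r (f \in z \wedge \varphi)$, using clause 2 of the forcing definition for $\wedge$. Finally $\Vdash_q \forall x(\cdots)$ follows from clause 7: for every $r \geq q$ and every $\sigma$ defined on a neighbourhood in $[q)$, the biconditional $\Vdash_r \sigma \in y \leftrightarrow \sigma \in z \wedge \varphi$ holds, and a biconditional $\Vdash_r$ follows once both directions of the implication are forced on all of $[r)$, which is exactly what the pointwise equivalence above delivers.

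**The main obstacle** I expect is handling the implication/biconditional correctly in the intuitionistic setting: $\Vdash_r(\chi \to \theta)$ is \emph{not} just "$\Vdash_r \chi$ implies $\Vdash_r \theta$" but requires the implication to hold on a whole neighbourhood (clause 5), and similarly $\leftrightarrow$ must be unwound as a conjunction of two such implications. The fix is that my construction makes the equivalence $\Vdash_s f \in y \Leftrightarrow \Vdash_s (f \in z \wedge \varphi)$ hold simultaneously at \emph{every} $s$ in the relevant hereditary set, not just at $r$; so both implications hold on all of $[r)$ and hence are forced at $r$. A secondary subtlety is that the universally quantified $x$ ranges over sections defined on arbitrary open neighbourhoods, not just global sections on $[q)$ — but clause 7 already accounts for this, and since my $y(r)$ is defined uniformly by a formula, restricting $y$ to any such neighbourhood and intersecting with the local section $\sigma$ poses no problem. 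I would also remark that this argument is the sheaf-theoretic transcription of the ordinary proof, and that no appeal to the Gödel translation is needed here because comprehension, unlike choice or foundation, is forced in its literal form; the only genuinely new ingredient is Corollary~\ref{localtruth}, invoked to see that the defining set $\{r : \Vdash_r \varphi\}$ is open and upward closed.
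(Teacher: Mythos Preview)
Your proposal is correct and follows essentially the same approach as the paper: define $y(r)=\{f\in z(r):\Vdash_r\varphi(f,\dots)\}$, use the rank of $z$ to bound the level of $y$, and invoke monotonicity of forcing (the local-truth corollary) to check the coherence condition so that $y\in V(p)$. In fact you are more thorough than the paper, which stops after verifying $y\in V(p)$ and does not spell out the biconditional or the intuitionistic handling of $\leftrightarrow$; your discussion of those points is accurate and fills in what the paper leaves implicit.
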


\begin{proof}
Let $z, w_1,...,w_n \in V(p)$ and 
\[y:[p)\rightarrow  \bigcup_{q\geq p} V(q)\]
\[y(q)=\{x\in z(q); \Vdash_q \varphi(x).\}\]
Let $\alpha=ran_p (z)$ then $z\in V_{\alpha+1}(p)$. Since $y(q)\subset z(q)$ then $y(q)\subset V_{\alpha}(q)$, therefore 
\[y:[p)\rightarrow \bigcup_{q\geq p} P(V_{\alpha}(q)).\]
Given $r\geq q\geq p$ and $g\in y(q)$ we have $g\in z(q)$ and $\Vdash_q \varphi(g)$. Thus $g\upharpoonright_{[r)}\in z(r)$ and $\Vdash_r \varphi(g\upharpoonright_{[r)})$. Then $g\upharpoonright_{[r)}\in y(r)$ and $y\in V(p)$. 
\end{proof}

\subsection*{\normalsize{ Axiom of Pairing: \textit{Given two sets $x$ and $y$, there exists a set $z$ such that $w\in z$ if and only if $w=x$ or $w=y$.}}}

\begin{lema}
\[\Vdash_p \forall x\forall y\exists z(\forall w(w\in z \leftrightarrow w=x\vee w=y)\]
\end{lema}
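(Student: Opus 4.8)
The plan is to follow the recipe already used for the Axiom of Comprehension: fix an arbitrary node $q\geq p$ and arbitrary sections $x,y\in V(q)$, produce an explicit element $z\in V(q)$, and verify that it forces the inner formula. The obvious candidate is the function $z:[q)\to\bigcup_{s\geq q}V(s)$ defined by $z(s)=\{x\upharpoonright_{[s)},\,y\upharpoonright_{[s)}\}$. Conceptually, $z(s)$ is precisely the collection of sections that, at the node $s$, are seen to be equal to $x$ or to $y$, because $\Vdash_s w=x$ holds exactly when $w\upharpoonright_{[s)}=x\upharpoonright_{[s)}$; this is what will make the desired biconditional hold nodewise rather than only on a dense open subset.

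First I would check that $z$ really is a member of $V(q)$. Put $\alpha=\max(ran_q(x),ran_q(y))$, so that $x,y\in V_{\alpha+1}(q)$. By the remark that restriction to a larger node does not increase rank (i.e. $f\in V_{\alpha}(p)$ implies $f\upharpoonright_{[q)}\in V_{\alpha}(q)$), we get $x\upharpoonright_{[s)},\,y\upharpoonright_{[s)}\in V_{\alpha+1}(s)$ for every $s\geq q$, hence $z(s)\subseteq V_{\alpha+1}(s)$, so $z$ takes values in $\bigcup_{s\geq q}P(V_{\alpha+1}(s))$. The coherence condition is immediate: if $q\leq s\leq t$ and $g\in z(s)$, then $g$ is $x\upharpoonright_{[s)}$ or $y\upharpoonright_{[s)}$, so $g\upharpoonright_{[t)}$ is $x\upharpoonright_{[t)}$ or $y\upharpoonright_{[t)}$, which lies in $z(t)$. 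Therefore $z\in V_{\alpha+2}(q)\subseteq V(q)$.

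Next I would establish $\Vdash_q\forall w\,(w\in z\leftrightarrow w=x\vee w=y)$. By the clause for the universal quantifier it suffices to exhibit a neighbourhood of $q$ — take $[q)$ — such that for every $t\in[q)$ and every section $w$ defined at $t$ (and since open sets are hereditary we have $[t)\subseteq\mathrm{dom}(w)$ and may restrict $w$ to $[t)$ without affecting forcing at nodes above $t$) one has $\Vdash_t(w\in z\leftrightarrow w=x\vee w=y)$. Here everything collapses: for every $u\geq t$, unfolding the definition of membership, $\Vdash_u w\in z$ iff $w\upharpoonright_{[u)}\in z(u)=\{x\upharpoonright_{[u)},y\upharpoonright_{[u)}\}$ iff $\Vdash_u w=x$ or $\Vdash_u w=y$ iff $\Vdash_u(w=x\vee w=y)$. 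Thus the propositions $w\in z$ and $w=x\vee w=y$ are forced at exactly the same nodes of $[t)$, so each of the two implications comprising the biconditional is witnessed by the neighbourhood $[t)$, giving $\Vdash_t(w\in z\leftrightarrow w=x\vee w=y)$. Hence $\Vdash_q\exists z\,\forall w(w\in z\leftrightarrow w=x\vee w=y)$, and since $q\geq p$ and $x,y\in V(q)$ were arbitrary, the clause for the universal quantifier yields $\Vdash_p\forall x\forall y\exists z(\forall w(w\in z\leftrightarrow w=x\vee w=y))$.

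I do not anticipate a real obstacle: in contrast with the Axiom of Choice and the Axiom of Foundation, Pairing goes through in its literal classical form, and the only points needing a little care are the bookkeeping that the fibrewise two-element family $z$ actually satisfies the rank and coherence requirements of the hierarchy, and the observation that ``equal to $x$ or to $y$ at a node'' is, by construction, the same as ``belongs to $z(s)$'', which is exactly why the equivalence survives nodewise (so that both arrows of $\leftrightarrow$ are trivial).
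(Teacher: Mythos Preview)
Your proposal is correct and follows essentially the same approach as the paper: both define the candidate $z$ by $z(s)=\{x\upharpoonright_{[s)},\,y\upharpoonright_{[s)}\}$, verify the coherence condition to place $z$ in the hierarchy, and then check the biconditional nodewise. Your version is in fact more carefully written than the paper's, which leaves the rank bookkeeping implicit and contains a small slip (writing $z\in V(p)$ where $z\in V(q)$ is meant).
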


\begin{proof}
Given $q\geq p$ and $x,y\in V(q)$, let 
\[z:[q)\rightarrow \bigcup_{q\geq p} V(q)\]
\[z(r)=\{x\upharpoonright_{[r)}, y\upharpoonright_{[r)}\}.\]
it is clear that $\Vdash w\in z$ if and only if $w=x\upharpoonright_{[r)}$ or $w=y\upharpoonright_{[r)}$. Therefore $\Vdash_r \forall w(w\in z \rightarrow (w=x\vee w=y))$ the other direction is clear. To conclude is enough to show that $z\in V(p)$. We have $z(q)\subset V(q)$ and given $r\geq q\geq p$ if $w\in z(q)$ then $w=x\upharpoonright_{[q)}$ or $w=y\upharpoonright_{[q)}$. Take for example that $w=x\upharpoonright_{[q)}$ then $w\upharpoonright_{[r)}=(x\upharpoonright_{[q)})\upharpoonright_{[r)}=x\upharpoonright_{[r)}\in z(r)$. Thus $z\in V(p)$. 
\end{proof}

\subsection*{\normalsize{ Axiom of Union: \textit{For any set $\mathcal{F}$, there exists a set $A$ such that $x\in A$ if and only if $x\in Y$ for some $Y\in\mathcal{F}$.}}}

\begin{lema}
$\Vdash_p \forall \mathcal{F}\exists A \forall Y \forall x(x\in Y\wedge Y\in \mathcal{F}\rightarrow x\in A)$ 
\end{lema}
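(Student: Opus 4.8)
The plan is to imitate the pattern established by the previous axioms: given an arbitrary "variable set" $\mathcal{F}\in V(p)$, explicitly construct a candidate union set $A$ as a function on $[p)$ whose value at each node $q$ is the classical union of the appropriate fibres, verify that $A$ actually lies in $V(p)$ (the coherence conditions of the hierarchy), and then check that the forcing clause for the formula holds at every $q\geq p$. First I would fix $q\geq p$ and note $\mathcal{F}(q)\subset V_\alpha(q)$ where $\alpha=\mathrm{ran}_p(\mathcal{F})$; for $Y\in\mathcal{F}(q)$ we have $Y\subset V_{\beta}(q)$ for some $\beta<\alpha$, so the elements of elements of $\mathcal{F}$ at $q$ live in $\bigcup_{\beta<\alpha}V_\beta(q)\subseteq V_\alpha(q)$. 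This gives the rank bound needed to land $A$ inside $V_{\alpha+1}(p)$.

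Concretely I would set
\[
A:[p)\rightarrow\bigcup_{q\geq p}V(q),\qquad
A(q)=\{\,x\in\textstyle\bigcup\{Y(q):Y\in\mathcal{F}(q)\}\,\}=\bigcup_{Y\in\mathcal{F}(q)}Y(q).
\]
Then $A(q)\subseteq V_\alpha(q)$ by the rank computation above, so $A:[p)\rightarrow\bigcup_{q\geq p}\mathrm{P}(V_\alpha(q))$, which is the first defining condition of $V_{\alpha+1}(p)$. For the second (coherence) condition, take $r\geq q\geq p$ and $g\in A(q)$; then $g\in Y(q)$ for some $Y\in\mathcal{F}(q)$, and since $\mathcal{F}\in V(p)$ we have $Y\upharpoonright_{[r)}\in\mathcal{F}(r)$, and since $Y\in V(p)$ (it is a value of $\mathcal{F}$, hence lies in $V(q)$, and restricts coherently) we get $g\upharpoonright_{[r)}\in Y\upharpoonright_{[r)}(r)=Y(r)$. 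Hence $g\upharpoonright_{[r)}\in A(r)$, so $A\in V_{\alpha+1}(p)\subseteq V(p)$.

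It then remains to verify the forcing clause. For $r\geq p$, sections $Y,x$ defined on $[r)$ with $\Vdash_r x\in Y$ and $\Vdash_r Y\in\mathcal{F}$, by definition of $\Vdash$ for $\in$ this means $x\in Y(r)$ and $Y\in\mathcal{F}(r)$; hence $x\in\bigcup_{Z\in\mathcal{F}(r)}Z(r)=A(r)$, i.e. $\Vdash_r x\in A$. Since this holds at every $r\geq p$ (using that if $\Vdash_r\varphi$ requires checking at all nodes $\geq r$, but here the atomic clauses are node-local and persistent), we obtain $\Vdash_p\forall Y\forall x(x\in Y\wedge Y\in\mathcal{F}\rightarrow x\in A)$, and therefore $\Vdash_p\forall\mathcal{F}\exists A\forall Y\forall x(x\in Y\wedge Y\in\mathcal{F}\rightarrow x\in A)$.

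The only genuinely delicate point — and the one I would be most careful about — is the implication clause: by the Kripke–Joyal semantics, $\Vdash_r(x\in Y\wedge Y\in\mathcal{F}\rightarrow x\in A)$ requires that for \emph{every} $s\geq r$, if $\Vdash_s x\in Y$ and $\Vdash_s Y\in\mathcal{F}$ then $\Vdash_s x\in A$; but since we have just shown the conclusion holds at every node $s\geq p$ whenever the hypotheses do, this is immediate. A secondary subtlety is making sure the rank bookkeeping is uniform in $q$, i.e. that a single ordinal $\alpha=\mathrm{ran}_p(\mathcal{F})$ controls all the fibres $\mathcal{F}(q)$ for $q\geq p$ simultaneously; this follows because $\mathcal{F}\in V_{\alpha+1}(p)$ forces $\mathcal{F}(q)\subset V_\alpha(q)$ for all $q\geq p$ by the very definition of the hierarchy, and monotonicity of the $V_\beta(q)$ in $\beta$ (proved in the earlier lemma) closes the argument.
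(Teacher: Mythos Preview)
Your proof is correct and follows essentially the same approach as the paper: you define $A(q)=\bigcup\{Y(q):Y\in\mathcal{F}(q)\}$, bound the rank via $\alpha=\mathrm{ran}_p(\mathcal{F})$, verify the coherence condition using that $\mathcal{F}\in V(p)$ and $Y\in V(q)$, and then check the forcing clause pointwise. The only cosmetic difference is that the paper begins with an arbitrary $q\geq p$ and $\mathcal{F}\in V(q)$ (as the outer $\forall\mathcal{F}$ strictly requires), whereas you take $\mathcal{F}\in V(p)$; since your argument is uniform in the base node this is harmless.
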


\begin{proof}
Given $q\geq p$ and $\mathcal{F}\in V(q)$ let
\[ A:[q)\rightarrow \bigcup_{q\geq p} V(q)\]
\[A(q)=\bigcup\{Y(q):Y\in \mathcal{F}(q)\}.\]
Thus given $r\geq q$ and $Y, x\in V(r)$ if $\Vdash_{r}x\in Y$ and $\Vdash_r Y\in\mathcal{F}$ then $x\in Y(r)$ and $Y\in\mathcal{F}(r)$. Therefore $x\in \bigcup\{W(r):W\in \mathcal{F}(r)\}=A(r)$,then $\Vdash_r x\in A$.\\
Again we have to show that $A\in V(p)$. Let $\alpha=ran_p(\mathcal{F})$ then $\mathcal{F}\in V_{\alpha+1}(p)$. From the latter given $q\geq p$ we have $\mathcal{F}(q)\subset V_{\alpha}(q)$, thus $A(q)=\bigcup\{Y(q):Y\in \mathcal{F}(q)\}\subset V_{\alpha}(q)$. Let $r\geq q\geq p$ and $h\ A(q)$, then there exists $Y\in \mathcal{F}(q)$ such that $h\in Y(q)$. Since $\mathcal{F}\in V(p)$, $Y\upharpoonright_{[r)}\in \mathcal{F}(r)$. Using that $Y\in V(q)$ we have $h\upharpoonright_{[r)}\in Y(r)$ then $h\upharpoonright_{[r)}\in A(r)$.
\end{proof}

\subsection*{\normalsize{ Axiom of Power Set: \textit{For every set $x$ there exists a set $y$ such that $z\in y$ if and only if $z\subset x$.}}}

\begin{lema}
\[\Vdash_p \forall x\exists y \forall z( z\in y \leftrightarrow z\subset x)\]
\end{lema}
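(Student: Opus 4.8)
The plan is to follow the template of the preceding axioms: fix a node and a set living there, build an explicit candidate section by a ``comprehension over the fibre'', then check it lands in the hierarchy and forces the required property. Concretely, unwinding the clauses for $\forall$ and $\exists$ over the topology of hereditary subsets (where the basic neighbourhood of a node $q$ is $[q)$), forcing $\forall x\exists y\forall z(z\in y\leftrightarrow z\subseteq x)$ at $p$ reduces to the following: for every $q\geq p$ and every $x\in V(q)$ produce a section $y\in V(q)$ such that $\Vdash_r z\in y\leftrightarrow z\subseteq x$ holds for all $r\geq q$ and all $z\in V(r)$. Here $z\subseteq x$ is the abbreviation of $\forall w(w\in z\rightarrow w\in x)$, and a short computation with the clauses for $\forall$ and $\rightarrow$ shows that $\Vdash_r z\subseteq x$ is equivalent to the purely set-theoretic statement that $z(t)\subseteq x(t)$ for every $t\geq r$ (using that $\Vdash_t w\in z$ means exactly $w\in z(t)$).

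With this reformulation in hand, I would take as candidate the ``variable power object''
\[y(r)=\{\,z\in V(r): z(t)\subseteq x(t)\text{ for all }t\geq r\,\},\qquad r\in[q).\]
The first task is to check that $y$ is a legitimate element of the sheaf. Coherence under restriction is immediate: if $g\in y(r)$ and $r\leq s$, then $g\upharpoonright_{[s)}\in V(s)$ (restriction preserves the hierarchy, as already noted) and $(g\upharpoonright_{[s)})(t)=g(t)\subseteq x(t)$ for all $t\geq s$, so $g\upharpoonright_{[s)}\in y(s)$. For membership in the hierarchy, set $\alpha=ran_q(x)$, so that $x\in V_{\alpha+1}(q)$ and hence $x(t)\subseteq V_{\alpha}(t)$ for all $t\geq q$; then every $z\in y(r)$ satisfies $z(t)\subseteq x(t)\subseteq V_{\alpha}(t)$ for all $t\geq r$, and since $z$ is already an element of the hierarchy it satisfies the coherence condition, so in fact $z\in V_{\alpha+1}(r)$. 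Thus $y(r)\subseteq V_{\alpha+1}(r)$ for all $r\geq q$, and together with the coherence of $y$ this gives $y\in V_{\alpha+2}(q)\subseteq V(q)$.

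Finally I would verify the biconditional. For $r\geq q$ and $z\in V(r)$ one has $\Vdash_r z\in y$ iff $z\in y(r)$ iff $z(t)\subseteq x(t)$ for all $t\geq r$ iff $\Vdash_r z\subseteq x$, by the reformulation of the first paragraph. Since this chain of equivalences holds verbatim at every node above $q$, both implications making up $z\in y\leftrightarrow z\subseteq x$ are forced at $r$ (recall the clause for $\rightarrow$ at $r$ only quantifies over nodes above $r$, where the equivalence again holds), which is what was needed. The only step that is not pure bookkeeping is the rank estimate in the second paragraph: it is the variable-set analogue of the classical fact that the power-set operation raises rank by exactly one, and it works here because membership in $V_{\alpha+1}(r)$ constrains a function only by requiring its values to be subsets of the $V_{\alpha}(t)$ together with the coherence condition, both of which already hold for any $z\in V(r)$ that is bounded inside $x$. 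I expect this to be the main (and essentially only) obstacle; the remainder runs parallel to the proofs of Comprehension and Union above.
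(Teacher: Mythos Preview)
Your proof is correct and is essentially the same as the paper's. The paper defines the power object at a node $q$ as the set of functions $h:[q)\to\bigcup_{r\geq q}P(f(r))$ with $h(r)\subseteq f(r)$ and the usual coherence, which coincides with your $y(r)=\{z\in V(r):z(t)\subseteq x(t)\text{ for all }t\geq r\}$ once one notes (via the rank bound you give) that any such $h$ already lies in $V(q)$; your treatment of the quantifier unwinding and the rank estimate is in fact more explicit than the paper's.
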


\begin{proof}
Given $f\in V(p)$ the power set $g$ of $f$ is defined as:
\[g:[p)\rightarrow \bigcup_{q\geq p} V(q)\]
\begin{align*}
g(q)=&\{h:[q)\rightarrow \bigcup_{r\geq q}P(f(r)):1. \text{ If } r\geq q, h(r)\subset f(r),\\
&\text{ 2. If }t\geq r\geq q \text{ for all } i\in h(r), i\upharpoonright_{[t)}\in h(t)\}.
\end{align*}
Note that $x\in g(q)$ if and only if given $t\geq r\geq q$,  $y\in V(r)$,  $y\in x(t)$ implies $y\in f(t)$. Therefore if we show that $g\in V(p)$ we will have $\Vdash_p x\in g \leftrightarrow \forall y(y\in x\rightarrow y\in f)$.  To show that $g\in V(p)$, let $\alpha=ran_p(f)$.  If $q\geq p$ since  $f\in V_{\alpha+1}(p)$ we have  $f(r)\subset V_{\alpha+1}(r)$, thus $P(f(r))\subset P(V_{\alpha+1}(r))$ for $r\geq q\geq p$. On the other hand if $h\in g(q)$ it is clear that $h\upharpoonright_{[r)}\in g(r)$. 
\end{proof}

Note that as in the classical hierarchy we have 

\[\Vdash_p P(V_{\alpha})=V_{\alpha+1}.\]

\subsection*{\normalsize{The Axiom of Infinity:\textit{ There exists an Inductive set. }}}

We begin defining the successor function. Let $f\in V(p)$ we define $Suc(f)$ as:
\[Suc(f):[p)\rightarrow \bigcup_{q\geq p} V(q)\]
\[Suc(f)(q)=\{f\upharpoonright _{[q)}\}\cup f(q).\]
Since $f\in V(p)$  we have $f\upharpoonright_{[q)}\in V(q)$ and $f(q)\subset V(q)$ thus $Suc(f)(q)\subset V(q)$. On the other hand if $h\in Suc(f)(q)$ then $h=f\upharpoonright_{[q)}$ or $h\in f(q)$; in the first case $h\upharpoonright_{[r)}=(f\upharpoonright_{[q)})\upharpoonright_{[r)}=f\upharpoonright_{[r)}\in Suc(f)(r)$, in the second case $h\upharpoonright_{[r)}\in f(r)\subset Suc(f)(r)$. Thus $Suc(f)\in V(p)$. Furthermore it is clear that $\Vdash_p x\in Suc(f)\leftrightarrow x=f \vee x\in f$.\\
Recall that the set that $\widehat{\emptyset}(p)$ is forced as the empty set in $p$. We have then that  $f\in V(p)$ is an inductive set if:
\[\Vdash_{p} \widehat{\emptyset}(p)\in f \vee (\forall x(x\in f \rightarrow Suc(x)\in f))\]

\begin{lema}
$\widehat{\omega}(p)$ is an inductive set i.e. 
\[\Vdash _p (\widehat{\emptyset}(p) \in \widehat{\omega}(p)) \wedge
(\forall x(x\in \widehat{\omega}(p) \rightarrow Suc(x) \in
\widehat{\omega}(p)))\]
\end{lema}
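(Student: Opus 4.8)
The plan is to verify the two conjuncts of the inductive-set formula separately — clause~2 in the definition of $\Vdash_{x}$ reduces $\Vdash_p(\varphi\wedge\psi)$ to $\Vdash_p\varphi$ together with $\Vdash_p\psi$ — and to settle the universal conjunct by stripping off the quantifier and the implication with the node-wise semantics (clauses~7 and~5), finishing in a short computation on canonical names. The first conjunct, $\Vdash_p\widehat{\emptyset}(p)\in\widehat{\omega}(p)$, is immediate from the atomic clause $\Vdash_p a\in f\Leftrightarrow a\in f(p)$: by Corollary~\ref{redef}, $\widehat{\omega}(p)(p)=\{\widehat{b}(p):b\in\omega\}$, and since $\emptyset\in\omega$ the name $\widehat{\emptyset}(p)$ is one of those elements, so $\widehat{\emptyset}(p)\in\widehat{\omega}(p)(p)$.

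For the second conjunct, $\Vdash_p\forall x\bigl(x\in\widehat{\omega}(p)\rightarrow Suc(x)\in\widehat{\omega}(p)\bigr)$, I would apply clause~7 with the neighbourhood $U=[p)$ (admissible, since the only parameter section $\widehat{\omega}(p)$ is total on $[p)$); this reduces the claim to showing, for every $y\geq p$ and every section $\sigma$ defined at $y$ — so $[y)\subseteq\operatorname{dom}(\sigma)$, the domain being hereditary — that $\Vdash_y\bigl(\sigma\in\widehat{\omega}(p)\rightarrow Suc(\sigma)\in\widehat{\omega}(p)\bigr)$. Since forcing at a node depends on its parameters only up to restriction to a neighbourhood and $\widehat{\omega}(p)\upharpoonright_{[y)}=\widehat{\omega}(y)$ by the restriction lemma, the parameter may be read as $\widehat{\omega}(y)$; clause~5 with $W=[y)$ then leaves exactly: for all $z\geq y$, if $\Vdash_z\sigma\in\widehat{\omega}(z)$ then $\Vdash_z Suc(\sigma)\in\widehat{\omega}(z)$.

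The remaining computation at such a $z$ runs as follows. By the atomic clause, the hypothesis $\Vdash_z\sigma\in\widehat{\omega}(z)$ says that $\sigma$, regarded as an element of $V(z)$, lies in $\widehat{\omega}(z)(z)=\{\widehat{b}(z):b\in\omega\}$; hence $\sigma=\widehat{n}(z)$ for some $n\in\omega$. The key identity is
\[Suc\bigl(\widehat{a}(z)\bigr)=\widehat{a\cup\{a\}}(z)\qquad\text{for every set }a,\]
proved by evaluating at $r\geq z$: $Suc(\widehat{a}(z))(r)=\{\widehat{a}(z)\upharpoonright_{[r)}\}\cup\widehat{a}(z)(r)$, which by the restriction lemma and Corollary~\ref{redef} equals $\{\widehat{a}(r)\}\cup\{\widehat{b}(r):b\in a\}=\{\widehat{b}(r):b\in a\cup\{a\}\}=\widehat{a\cup\{a\}}(z)(r)$. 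Taking $a=n$ gives $Suc(\sigma)=\widehat{n\cup\{n\}}(z)$, and since $n\in\omega$ implies $n\cup\{n\}\in\omega$, this element lies in $\widehat{\omega}(z)(z)$, i.e.\ $\Vdash_z Suc(\sigma)\in\widehat{\omega}(z)$. Combining with the first conjunct via clause~2 finishes the lemma.

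I do not expect a genuinely hard step here. The only real issue is bookkeeping: choosing the right neighbourhoods in clauses~7 and~5 (here $[p)$, resp.\ $[y)$, both work, since all relevant domains are hereditary and $\widehat{\omega}(p)$ is total on $[p)$) and keeping the restrictions of $\widehat{\omega}$, of $Suc$, and of the quantified section straight along the order, never confusing a section with its value at a node. Once the successor identity $Suc(\widehat{a}(\cdot))=\widehat{a\cup\{a\}}(\cdot)$ is available — and it drops out at once from Corollary~\ref{redef} and the restriction lemma — everything else is routine.
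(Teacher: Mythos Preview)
Your proposal is correct and follows essentially the same approach as the paper: both arguments dispatch the first conjunct by observing $\widehat{\emptyset}(p)\in\widehat{\omega}(p)(p)$, and for the second conjunct both unwind the quantifier/implication to a node, identify the witness with some $\widehat{n}$, and verify $Suc(\widehat{n})=\widehat{n+1}$ by the pointwise computation $Suc(\widehat{n}(t))(r)=\{\widehat{n}(r)\}\cup\{\widehat{m}(r):m<n\}=\widehat{n+1}(t)(r)$. Your version is just more explicit about the semantic clauses and states the successor identity for arbitrary $a$ rather than only for $n\in\omega$, but the substance is identical.
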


\begin{proof}
1. Since $0\in\omega$ we have  $\widehat{0}(p)=\widehat{\emptyset}(p)\in\widehat{\omega}(p)(p)$ then $\Vdash_p \widehat{\emptyset}(p)\in \widehat{\omega}(p)$.\\
2. Let $t\geq q\geq p$ and $x\in V(q)$. Suppose $\Vdash_t x\in\widehat{\omega}(t)$ then $x\in \widehat{\omega}(t)(t)=\{\widehat{n}(t):n\in\omega\}$, thus $x=\widehat{n}(t)$ for some $n\in \omega$. From this we have:
\begin{eqnarray*}
 Suc(\widehat{n}(t))(r)&=&\{ \widehat{n}(r)\} \cup \widehat{n}(t)(r)\\
                    &=&\{\widehat{n}(r)\} \cup
                    \{\widehat{m}(r):m<n\}\\
                    &=&\{\widehat{a}(r):a<n+1\}\\
                    &=&\widehat{n+1}(t)(r).
                    \end{eqnarray*}
                    
Since  $\Vdash_t \widehat{n+1}(t)\in \widehat{\omega}(t)$ we can conclude  $\Vdash_t Suc(x)\in\widehat{\omega}(t)$.                 
\end{proof}

\begin{corol}
\[\Vdash_p \exists x(\widehat{\emptyset}(p)\in x\wedge (\forall y(y\in x\rightarrow Suc(y)\in x))\]
\end{corol}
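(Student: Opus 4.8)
The plan is to deduce the corollary immediately from the preceding lemma, using only the forcing clause for the existential quantifier. Recall that in the pointwise semantics $\Vdash_p \exists x\,\psi(x)$ holds precisely when there is a section $\sigma$ defined at $p$ — i.e. a section whose domain is an open neighbourhood of $p$, the smallest such being $[p)$, so concretely an element $\sigma\in V(p)$ — for which $\Vdash_p \psi(\sigma)$. Hence it suffices to exhibit one witness lying in $V(p)$ such that the matrix of the asserted existential is forced at $p$.

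The natural candidate is $\widehat{\omega}(p)$. First I would observe that $\widehat{\omega}(p)\in V(p)$: this is an instance of Step 3 of the theorem on the embedding $\Psi\colon V\to V(p)$, $\Psi(a)=\widehat{a}(p)$, which establishes $\widehat{a}(p)\in V(p)$ for every set $a$; applying it to $a=\omega$ gives the claim. Next, the previous lemma asserts exactly that
\[\Vdash_p (\widehat{\emptyset}(p)\in\widehat{\omega}(p))\wedge(\forall y(y\in\widehat{\omega}(p)\rightarrow Suc(y)\in\widehat{\omega}(p))),\]
which is the matrix of the desired existential with $x$ instantiated by $\widehat{\omega}(p)$. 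Feeding this witness into the clause for $\exists$ yields $\Vdash_p \exists x(\widehat{\emptyset}(p)\in x\wedge\forall y(y\in x\rightarrow Suc(y)\in x))$, as required.

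The only point that calls for a moment's care — and it is hardly an obstacle — is that the formula carries the constant section $\widehat{\emptyset}(p)$ as a parameter; since $\widehat{\emptyset}(p)$ is a bona fide section defined on $[p)$ (indeed $\widehat{\emptyset}(p)(q)=\emptyset$ for every $q\geq p$), the substitution into the quantifier clause is legitimate and nothing changes. If one prefers a genuinely parameter-free formulation of Infinity, one replaces $\widehat{\emptyset}(p)$ by a bound variable ranging over sets with no elements and combines this argument with the Axiom of Existence lemma; the forcing derivation is identical, only marginally longer.
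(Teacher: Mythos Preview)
Your proposal is correct and matches the paper's approach exactly: the corollary is stated in the paper without proof, as an immediate consequence of the preceding lemma that $\widehat{\omega}(p)$ is forced to be inductive, via the existential clause of the forcing semantics. Your added justifications---that $\widehat{\omega}(p)\in V(p)$ by the embedding theorem and that the parameter $\widehat{\emptyset}(p)$ is a legitimate section---are sound and simply make explicit what the paper leaves tacit.
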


\subsection*{ \normalsize{Axiom Schema of Replacement: \textit{Let $\varphi$ a formula with free variables $x,y A,w_1,...,w_n$ such that for every $x$ there exists a unique set $y$ such that $\varphi(x,y)$ holds. For every set $A$ there exists a set $Y$ such that for every $x\in A$ there exists $y\in Y$ such that $\varphi(x,y)$ holds.}}}

\begin{lema}
\[\Vdash_p \forall A\forall w_1...\forall w_n(\forall x\in A \exists! y \varphi (x,y,...)\rightarrow \exists Y\forall x\in A\exists y\in Y \varphi(x,y,...))\]
\end{lema}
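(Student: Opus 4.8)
The plan is to mimic the proofs of the previous axioms: construct the witnessing variable set $Y$ explicitly as a function on $[p)$, using the classical Replacement axiom in the ground model applied fibrewise, and then check that the resulting function lies in $V(p)$ and forces the required formula. First I would fix $q\geq p$, sections $A,w_1,\dots,w_n\in V(q)$, and assume $\frak{A}\Vdash_q\forall x\in A\,\exists!y\,\varphi(x,y,\dots)$. For each $r\geq q$ and each $x\in A(r)$ the genericity-free hypothesis (plus Corollary \ref{localtruth}) gives, on a neighbourhood of $r$, a section $y$ witnessing $\varphi$; I would collect all such witnesses. Concretely, for $r\geq q$ set
\[
Y(r)=\{\,y\in V(r): \exists x\in A(r)\ \ \frak{A}\Vdash_r\varphi(x,y,w_1\upharpoonright_{[r)},\dots)\,\}.
\]
The key point is that the uniqueness clause, together with classical Replacement applied in the ground model to the definable class function $x\mapsto y$ on the set $A(r)$, shows that $Y(r)$ is a \emph{set} (not a proper class) and moreover that its elements have bounded rank, so $Y(r)\subseteq V_{\alpha}(r)$ for a suitable $\alpha$ depending only on $q$ (here one uses that $A\in V_{\beta+1}(p)$ for some $\beta$ and that ranks do not grow with the order).

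Next I would verify $Y\in V(p)$ by checking the two defining conditions of the cumulative hierarchy: that $Y(r)\subseteq V_{\alpha}(r)$ for all $r\geq q$ (just argued), and the coherence condition, namely that for $t\geq r\geq q$ and $g\in Y(r)$ one has $g\upharpoonright_{[t)}\in Y(t)$. The latter follows because if $g$ witnesses $\varphi(x,g,\dots)$ at $r$ for some $x\in A(r)$, then by persistence of forcing (Corollary \ref{localtruth}) $g\upharpoonright_{[t)}$ witnesses $\varphi(x\upharpoonright_{[t)},g\upharpoonright_{[t)},\dots)$ at $t$, and $x\upharpoonright_{[t)}\in A(t)$ since $A\in V(q)$. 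Having established $Y\in V(p)$, the forcing of the formula is almost immediate: if $\frak{A}\Vdash_r x\in A$ and $\frak{A}\Vdash_r\exists!y\,\varphi(x,y,\dots)$, then $x\in A(r)$ and by clause 6 of the forcing definition there is, on a cover of a neighbourhood of $r$, a section $y$ with $\frak{A}\Vdash\varphi(x,y,\dots)$; by construction every such local $y$ lands in $Y$, so $\frak{A}\Vdash_r\exists y\in Y\,\varphi(x,y,\dots)$, and since $r\geq q\geq p$ was arbitrary we get $\frak{A}\Vdash_p$ the full sentence.

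The main obstacle I anticipate is the rank bound: unlike Comprehension or Union, where the witness is literally a sub-collection of something already at hand, here the witnesses $y$ are produced by the hypothesis and a priori could have unbounded rank as $x$ ranges over $A(r)$ and $r$ ranges over $[q)$. Controlling this requires carefully invoking classical Replacement in the ground model — once for each $r$, applied to the set $A(r)$ and the ground-model-definable assignment $x\mapsto(\text{the unique }y)$ — and then taking a further union/supremum over the set $[q)$ (which is a set, so this is legitimate) to obtain a single ordinal $\alpha$ with $Y(r)\subseteq V_\alpha(r)$ for all $r\geq q$. A secondary subtlety is the interpretation of ``$\exists!y$'' in the intuitionistic forcing semantics: uniqueness forced at $r$ only guarantees that any two local witnesses agree on a common refinement, so strictly the assignment is a partial function defined up to the equivalence ``equal on a dense open set''; this is exactly the kind of situation the Gödel translation in Theorem \ref{ftmt} is designed to handle, and it does not obstruct the set-hood of $Y(r)$, which only needs that the class of witnesses be a set.
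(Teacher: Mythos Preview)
Your proposal is correct and follows essentially the same approach as the paper: define $Y$ fibrewise by $Y(r)=\{y:\exists x\in A(r)\ \Vdash_r\varphi(x,y,\dots)\}$ and verify the two conditions for $Y\in V(p)$. In fact you are more careful than the paper on the one nontrivial point: the paper's proof only records $Y(q)\subset V(q)$ and the coherence condition, but never explicitly bounds the rank, i.e.\ never exhibits a single ordinal $\alpha$ with $Y(r)\subseteq V_{\alpha}(r)$ for all $r\geq q$, which is what membership in some $V_{\alpha+1}(q)$ requires. Your plan to invoke classical Replacement in the ground model on each $A(r)$ and then take a supremum over the set $[q)$ is exactly what fills that gap. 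Your final paragraph about the intuitionistic reading of $\exists!$ is more cautious than the paper (which ignores the issue entirely), but it does no harm; note that for the construction of $Y$ the uniqueness clause is never actually used --- only the existence of some witness is needed to populate $Y(r)$ and to bound ranks via Replacement.
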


\begin{proof}
Define 
\[Y:[p)\rightarrow \bigcup_{q\geq p} V(q)\]
\[Y(q)=\{y: \text{ there exists } x\in A(q) \text{ such that } \Vdash_q \varphi(x,y,...).\]
It is easy to verify that $Y$ is the set we are looking for, again we just have to show that $Y\in V(p)$. Let $y\in Y(q)$ then there exists $x\in A(q)$ such that $\Vdash_q \varphi(x,y,...)$ this implies that $y\in V(q)$ then $Y(q)\subset V(q)$. On the other hand given $r\geq q\geq p$, if $g\in Y(q)$ let $x\in A(q)$ such that $\Vdash_q \varphi(x,g,...)$ then $\Vdash_r \varphi(x\upharpoonright_{[r)}, g\upharpoonright_{[r)},...)$. Since $x\upharpoonright_{[r)}\in A(r)$ (because $A\in V(q)$) we have $g\upharpoonright_{[r)}\in \{y: \text{there exists }x\in A(r) \text{ such that } \Vdash_r\varphi(x,y,...)\}=Y(r)$.
\end{proof}

\subsection*{\normalsize{Functions on $V(p)$.}}

Before proving that the axioms of foundation and choice are forced we will need to see how the functions are defined on $V(p)$ or in other words what kind of objects are forced as functions between objects in $V(p)$.\\

Given $f,g\in V(p)$ lets define $(f,g)$ as:
\[(f,g):[p)\rightarrow \bigcup_{q\geq p}V(p)\]
\[(f,q)(q):\{\{f\}\upharpoonright_{[q)},\{f,g\}\upharpoonright_{[q)}\}.\]

We have then:

\begin{lema}
\[\Vdash_p \forall x(x\in (f,g)\leftrightarrow (x=\{f\}\vee x=\{f,g\}))\]
\end{lema}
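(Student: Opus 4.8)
The plan is to unwind the definition of the forcing relation $\Vdash_r$ at an arbitrary node $r\geq p$ applied to the ordered-pair object $(f,g)$, and reduce the claim to the elementary Kuratowski-pairing identity together with the already-established Axiom of Pairing. The object $(f,g)$ is defined fiberwise by $(f,g)(q)=\{\{f\}\!\upharpoonright_{[q)},\{f,g\}\!\upharpoonright_{[q)}\}$, where here $\{f\}$ and $\{f,g\}$ abbreviate the pairing constructions already shown to live in $V(p)$ by the Axiom of Pairing lemma (applied with $x=y=f$, resp.\ $x=f$, $y=g$). So the first step is simply to record that $(f,g)\in V(p)$: this follows by exactly the same bookkeeping used in the pairing lemma — $(f,g)(q)\subseteq V(q)$ because $\{f\}\!\upharpoonright_{[q)}$ and $\{f,g\}\!\upharpoonright_{[q)}$ are in $V(q)$, and the coherence condition $h\in (f,g)(q)\Rightarrow h\!\upharpoonright_{[r)}\in (f,g)(r)$ holds since restriction commutes with restriction, $(\{f\}\!\upharpoonright_{[q)})\!\upharpoonright_{[r)}=\{f\}\!\upharpoonright_{[r)}$ and likewise for $\{f,g\}$.

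The second step is the forcing computation. By the definition of $\Vdash$, we have $\Vdash_r \forall x(x\in(f,g)\leftrightarrow(x=\{f\}\vee x=\{f,g\}))$ iff for every $s\geq r\geq p$ and every $x\in V(s)$, $\Vdash_s x\in(f,g)\leftrightarrow\Vdash_s(x=\{f\}\vee x=\{f,g\})$. By the definition of the belonging relation, $\Vdash_s x\in(f,g)$ means $x\in(f,g)(s)=\{\{f\}\!\upharpoonright_{[s)},\{f,g\}\!\upharpoonright_{[s)}\}$, i.e.\ $x=\{f\}\!\upharpoonright_{[s)}$ or $x=\{f,g\}\!\upharpoonright_{[s)}$. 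On the other side, by the clause for $\vee$, $\Vdash_s(x=\{f\}\vee x=\{f,g\})$ holds iff $\Vdash_s x=\{f\}$ or $\Vdash_s x=\{f,g\}$, which by the atomic clause for equality means $x$ and $\{f\}$ (resp.\ $\{f,g\}$) have the same fiber at $s$, equivalently $x=\{f\}\!\upharpoonright_{[s)}$ (resp.\ $x=\{f,g\}\!\upharpoonright_{[s)}$) — here one uses that the pair objects are already sections of $V(p)$ so that the convention $\{f\}$ occurring inside $V(s)$ denotes $\{f\}\!\upharpoonright_{[s)}$, exactly as in the pairing lemma. The two conditions therefore coincide verbatim, giving the biconditional at $s$, hence at $r$, hence at $p$.

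One subtlety worth flagging — and I expect it to be the only real point requiring care rather than routine unwinding — is the implicit coercion between "$\{f\}$ as an element/section over $[p)$" and "its restriction $\{f\}\!\upharpoonright_{[s)}$ as it appears in the fiber $V(s)$," i.e.\ making precise in which structure the free variables/parameters in the formula $x=\{f\}$ are interpreted at a node $s>p$. This is the same abuse of notation already present in the statement and proof of the Axiom of Pairing lemma, and it is resolved the same way: a parameter denoting a section of $V(p)$ is, at node $s$, interpreted by its restriction to $[s)$, and the equality clause then compares fibers at $s$. Since Lemma (Pairing) guarantees $\{f\}$ and $\{f,g\}$ are genuine elements of $V(p)$, all restrictions are well-defined and the argument closes. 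Apart from this, the proof is a direct translation of the Kuratowski definition $(f,g)=\{\{f\},\{f,g\}\}$ through the Kripke–Joyal clauses, with no induction and no topological input beyond the trivial observation that $[p)$ is the whole space from the perspective of the node $p$.
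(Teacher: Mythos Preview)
Your proposal is correct and follows essentially the same approach as the paper: both arguments unwind the definition $(f,g)(q)=\{\{f\}\!\upharpoonright_{[q)},\{f,g\}\!\upharpoonright_{[q)}\}$ at an arbitrary $q\geq p$ and observe that $\Vdash_q x\in(f,g)$ is literally the same condition as $\Vdash_q(x=\{f\}\vee x=\{f,g\})$. Your version is more thorough---you explicitly verify $(f,g)\in V(p)$ and spell out the parameter-restriction convention---but the paper's three-line proof is the same argument in compressed form.
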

 
\begin{proof}
Let $q\geq p$ then $\Vdash_q x\in (f,g)$ implies  $x\in (f,g)(q)=\{\{f\}\upharpoonright_{[q)},\{f,g\}\upharpoonright_{[q)}\}$. Thus $\Vdash_q x\in (f,g)$ if and only if $x=\{f\}\upharpoonright_{[q)}$ or $x=\{f,g\}\upharpoonright_{[q)}$ and this is true if and only if $\Vdash_q x=\{f\}\vee x=\{f,g\}$
\end{proof}

From this follows the next important result:

\begin{lema}
Given  two arbitrary sets $a,b$ we have 
\[\Vdash_p\widehat{(a,b)}(p)=(\widehat{a}(p), \widehat{b}(p))\]
\end{lema}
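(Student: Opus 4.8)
The plan is to prove the claimed identity $\Vdash_p \widehat{(a,b)}(p) = (\widehat a(p), \widehat b(p))$ by unwinding both sides to the level of the underlying functions on $[p)$ and comparing their values node by node, using the already-established facts about $\widehat{\,\cdot\,}$, in particular Corollary \ref{redef} ($\widehat{c}(p)(q) = \{\widehat{d}(q) : d \in c\}$) and the preceding lemma describing the value of the ordered-pair object $(f,g)(q) = \{\{f\}\!\upharpoonright_{[q)}, \{f,g\}\!\upharpoonright_{[q)}\}$. Since equality of sections is forced at $p$ exactly when the sections agree on all of $[p)$ (by the atomic clause of Kripke--Joyal semantics, $\Vdash_U \sigma_1 = \sigma_2 \Leftrightarrow \sigma_1\!\upharpoonright_U = \sigma_2\!\upharpoonright_U$), it suffices to show $\widehat{(a,b)}(p)(q) = (\widehat a(p), \widehat b(p))(q)$ for every $q \geq p$.

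First I would compute the left-hand side. By definition of $\widehat{\,\cdot\,}$ and Corollary \ref{redef}, $\widehat{(a,b)}(p)(q) = \{\widehat{c}(q) : c \in (a,b)\}$. Recalling the Kuratowski definition $(a,b) = \{\{a\}, \{a,b\}\}$, this is $\{\widehat{\{a\}}(q),\, \widehat{\{a,b\}}(q)\}$. Then, applying Corollary \ref{redef} once more to each of these, $\widehat{\{a\}}(q)(r) = \{\widehat a(r)\}$ and $\widehat{\{a,b\}}(q)(r) = \{\widehat a(r), \widehat b(r)\}$ for $r \geq q$. So the left-hand side, evaluated at $q$, is the two-element set whose members are the sections $r \mapsto \{\widehat a(r)\}$ and $r \mapsto \{\widehat a(r), \widehat b(r)\}$ on $[q)$.

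Next I would compute the right-hand side. By the definition of the ordered-pair object applied to $f = \widehat a(p)$ and $g = \widehat b(p)$, we get $(\widehat a(p), \widehat b(p))(q) = \{\{\widehat a(p)\}\!\upharpoonright_{[q)},\, \{\widehat a(p), \widehat b(p)\}\!\upharpoonright_{[q)}\}$, where $\{\,\cdot\,\}$ here denotes the singleton/pair object in $V(p)$ whose value at a node $r$ is the corresponding literal singleton/pair of restrictions. Here I need the auxiliary fact $\widehat a(p)\!\upharpoonright_{[q)} = \widehat a(q)$ (the lemma just before Corollary \ref{redef}), together with the analogous commutation of restriction with the singleton- and pair-forming operations (which is immediate from their definitions). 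Chasing these through shows that the section $\{\widehat a(p)\}\!\upharpoonright_{[q)}$ sends $r \mapsto \{\widehat a(p)\!\upharpoonright_{[r)}\} = \{\widehat a(r)\}$, and likewise $\{\widehat a(p), \widehat b(p)\}\!\upharpoonright_{[q)}$ sends $r \mapsto \{\widehat a(r), \widehat b(r)\}$ — exactly the two sections obtained on the left-hand side.

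The two sides therefore agree at every $q \geq p$, hence as sections on $[p)$, hence $\Vdash_p \widehat{(a,b)}(p) = (\widehat a(p), \widehat b(p))$. I expect the only real friction to be bookkeeping: keeping straight the distinction between the literal set-theoretic singleton/pair in the ground model and the "object-level" singleton/pair section in $V(p)$, and making sure restriction commutes with all the pairing operations in play. None of this is deep — it is the same pattern of induction-free unwinding used in the preceding lemma on $(f,g)$ — but it must be carried out carefully so that the identifications $\widehat a(p)\!\upharpoonright_{[q)} = \widehat a(q)$ and Corollary \ref{redef} are invoked at exactly the right spots.
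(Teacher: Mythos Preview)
Your proposal is correct and follows essentially the same route as the paper: compute both sides at an arbitrary $q\geq p$, reduce to the two identities $\widehat{\{a\}}(q)=\{\widehat a(p)\}\!\upharpoonright_{[q)}$ and $\widehat{\{a,b\}}(q)=\{\widehat a(p),\widehat b(p)\}\!\upharpoonright_{[q)}$, and verify each by evaluating at $r\geq q$ using Corollary~\ref{redef} together with $\widehat a(p)\!\upharpoonright_{[r)}=\widehat a(r)$. The paper's proof is just a terser version of exactly this computation.
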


\begin{proof}
\[\widehat{(a,b)}(p):[p)\rightarrow \bigcup_{q\geq p} V(q)\]
\[\widehat{(a,b)}(p)(q)=\{\widehat{\{a\}}(q),
\widehat{\{a,b\}}(q)\}\] 
On the other hand
\[(\widehat{a}(p),\widehat{b}(p))(q)=\{\{\widehat{a}(p)\}\upharpoonright_{[q)},
\{\widehat{a}(p),\widehat{b}(p)\}\upharpoonright_{[q)}\}.\] 
Thus, it is enough to show :
1.$\widehat{\{a\}}(q)=\{\widehat{a}(p)\}\upharpoonright_{[q)}$ and
2.$\widehat{\{a,b\}}(q)=\{\widehat{a}(p),\widehat{b}(p)\}\upharpoonright_{[q)}$.\\
1.$\widehat{\{a\}}(q)(r)=\{\widehat{a}(r)\}=\{\widehat{a}(p)\upharpoonright_{[r)}\}=\{\widehat{a}(p)\}\upharpoonright_{[q)}(r)$.\\
2.$\widehat{\{a.b\}}(q)(r)=\{\widehat{a}(r),\widehat{b}(r)\}=\{\widehat{a}(p)\upharpoonright_{[r)},\widehat{b}(p)\upharpoonright_{[r)}\}
=\{\widehat{a}(p),\widehat{b}(p)\}\upharpoonright_{[q)}(r)$.
\end{proof}

The next step is to define $f\times g$:

\[f\times g:[p)\rightarrow \bigcup_{q\geq p} V(q)\]
\[(f\times g)(q)=\{(a,b)\upharpoonright_{[q)}:a\in f(q) \vee b\in g(q)\}.\]

It is clear that

\begin{lema}\[\Vdash_p \forall x(x\in F\rightarrow \exists a\exists
b(a\in f\wedge b\in g\wedge x=(a,b))).\]\end{lema}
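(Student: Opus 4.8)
The plan is to unwind the forcing clauses for $\forall$, $\rightarrow$, and $\exists$ directly against the explicit definition of $f\times g$ just given, so that the statement essentially reduces to a bookkeeping check. First I would note that by the definition of forcing for $\forall$ (clause 7) and the fact that open sets here are hereditary sets $[q)$, it suffices to fix an arbitrary $q\geq p$ and an arbitrary section $x\in V(q)$, and show $\Vdash_q x\in F \rightarrow \exists a\exists b(\dots)$, where I am writing $F$ for $f\times g$. Unwinding clause 5 for $\rightarrow$, this in turn reduces to: for every $r\geq q$, if $\Vdash_r x\upharpoonright_{[r)}\in F$ then $\Vdash_r \exists a\exists b(a\in f\wedge b\in g\wedge x\upharpoonright_{[r)}=(a,b))$. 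So fix such an $r$ and assume $\Vdash_r x\upharpoonright_{[r)}\in F$, i.e. $x\upharpoonright_{[r)}\in (f\times g)(r)$.

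Next, by the definition of $(f\times g)(r)$, the assumption $x\upharpoonright_{[r)}\in (f\times g)(r)$ means there are $a_0\in f(r)$ and $b_0\in g(r)$ (reading the displayed defining condition; see the remark on the obstacle below) with $x\upharpoonright_{[r)}=(a_0,b_0)\upharpoonright_{[r)}$. Since $a_0\in f(r)$ we have $\Vdash_r a_0\in f$ by the definition of the belonging relation, and similarly $\Vdash_r b_0\in g$; and the previous Lemma (the characterization of $(f,g)$) together with the equality clause gives $\Vdash_r x\upharpoonright_{[r)}=(a_0,b_0)$. Taking $a_0$ and $b_0$ as witnesses for the two existential quantifiers (clause 6 for $\exists$, applied twice), we get $\Vdash_r \exists a\exists b(a\in f\wedge b\in g\wedge x\upharpoonright_{[r)}=(a,b))$, which is exactly what was needed. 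Collecting the quantifiers back up yields $\Vdash_p \forall x(x\in F\rightarrow \exists a\exists b(\dots))$.

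The only real subtlety — and the step I would flag as the main obstacle — is the precise reading of the defining clause $(f\times g)(q)=\{(a,b)\upharpoonright_{[q)}:a\in f(q)\vee b\in g(q)\}$: taken literally the ``$\vee$'' is almost certainly a typo for ``$\wedge$'' (a Cartesian product should pair elements of $f$ with elements of $g$, and the analogous earlier definitions of $Suc$, $(f,g)$, etc. all use the conjunctive form), so I would state the argument for the intended reading $a\in f(q)\wedge b\in g(q)$ and remark that this is the version for which $f\times g\in V(p)$ and for which the displayed Lemma holds. Everything else is routine: one also checks in passing that $(a_0,b_0)\upharpoonright_{[r)}=(a_0\upharpoonright_{[r)},b_0\upharpoonright_{[r)})$ (immediate from the definition of $(\,\cdot\,,\cdot\,)$), so that the witnesses can equally be taken inside $V(r)$, and that $f\times g\in V(p)$ by the usual rank-and-restriction argument used for every previous axiom — bounding $ran_p$ of $(a_0,b_0)$ in terms of $ran_p(f)$ and $ran_p(g)$, and checking closure under $\upharpoonright_{[t)}$ — though strictly speaking only the forcing statement, not membership in $V(p)$, is asserted here.
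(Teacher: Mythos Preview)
Your proof is correct and is essentially the only possible approach: unwind the forcing clauses against the definition of $(f\times g)(q)$. The paper itself offers no proof at all --- it simply prefaces the lemma with ``It is clear that'' --- so there is nothing to compare beyond noting that you have written out the routine verification the paper omits. Your flag about the ``$\vee$'' in the defining clause being a typo for ``$\wedge$'' is well taken and is confirmed by the lemma's own statement, which uses the conjunctive form $a\in f\wedge b\in g$.
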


We have also the next results:\\

\begin{lema}
Let $A$ and $B$ be two arbitrary sets then:
\[\Vdash_p
\widehat{A}(p)\times \widehat{B}(p)=\widehat{(A\times
B)}(p)\]
\end{lema}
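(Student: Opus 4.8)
The plan is to prove the stronger statement that the two sides coincide \emph{literally} as elements of $V(p)$, i.e.\ as functions on $[p)$; the forced equality at $p$ then drops out immediately from the atomic clause for equality (clause~1 of the forcing definition, or equivalently the Kripke--Joyal clause $\frak{A}\Vdash_{U}\sigma_{1}=\sigma_{2}\Leftrightarrow\sigma_{1}\upharpoonright_{U}=\sigma_{2}\upharpoonright_{U}$ taken at $U=[p)$, the basic neighbourhood of $p$). So it is enough to verify $(\widehat{A}(p)\times\widehat{B}(p))(q)=\widehat{(A\times B)}(p)(q)$ for each $q\geq p$.

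For the right-hand side I would use Corollary~\ref{redef} to write $\widehat{(A\times B)}(p)(q)=\{\widehat{c}(q):c\in A\times B\}$, and then, expressing each $c$ as a Kuratowski pair $(a,b)$ with $a\in A$, $b\in B$, apply the preceding lemma $\widehat{(a,b)}(q)=(\widehat{a}(q),\widehat{b}(q))$ (whose proof actually yields equality of the functions, not just forced equality) to obtain $\widehat{(A\times B)}(p)(q)=\{(\widehat{a}(q),\widehat{b}(q)):a\in A,\ b\in B\}$. For the left-hand side I would unfold the definition of $f\times g$ and use Corollary~\ref{redef} again: $(\widehat{A}(p)\times\widehat{B}(p))(q)$ consists of the restrictions $(u,v)\upharpoonright_{[q)}$ with $u\in\widehat{A}(p)(q)=\{\widehat{a}(q):a\in A\}$ and $v\in\widehat{B}(p)(q)=\{\widehat{b}(q):b\in B\}$; since $\widehat{a}(q),\widehat{b}(q)\in V(q)$, each such pair $(\widehat{a}(q),\widehat{b}(q))$ already has domain $[q)$, so the restriction is vacuous and the left-hand side is again $\{(\widehat{a}(q),\widehat{b}(q)):a\in A,\ b\in B\}$. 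Comparing, the two functions agree at every $q\geq p$, hence $\Vdash_{p}\widehat{A}(p)\times\widehat{B}(p)=\widehat{(A\times B)}(p)$.

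I would leave the bookkeeping of restrictions as the routine part not to be ground through here; this is also where the only real care is needed --- checking that the ordered-pair lemma delivers an honest equality of functions and that $(\widehat{a}(q),\widehat{b}(q))\upharpoonright_{[q)}=(\widehat{a}(q),\widehat{b}(q))$ --- and both are dispatched by unwinding the definitions of $(\cdot,\cdot)$, of $\widehat{\cdot}$ and of restriction, together with the already-proved identity $\widehat{a}(p)\upharpoonright_{[q)}=\widehat{a}(q)$. One should also recall, as in the earlier lemmas, that $\widehat{A}(p)\times\widehat{B}(p)$ does lie in $V(p)$, so that the equation is between genuine elements of the hierarchy.
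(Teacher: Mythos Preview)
Your argument is correct and follows exactly the same route as the paper: both prove the literal equality of the two functions by computing $(\widehat{A}(p)\times\widehat{B}(p))(q)$ and $\widehat{(A\times B)}(p)(q)$ for each $q\geq p$, using Corollary~\ref{redef} to identify the elements of $\widehat{A}(p)(q)$ and $\widehat{B}(p)(q)$ and the preceding ordered-pair lemma to pass between $(\widehat{a}(q),\widehat{b}(q))$ and $\widehat{(a,b)}(q)$. Your write-up is simply more explicit about the restriction step and the underlying justification for forced equality, which the paper leaves implicit.
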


\begin{proof}
\[(\widehat{A}(p)\times
\widehat{B}(p))(q)=\{(a,b)\upharpoonright_{[q)}:a\in
\widehat{A}(p)(q)\wedge \widehat{B}(p)(q)\}\]
\[=\{(\widehat{m}(q),\widehat{n}(q))\upharpoonright_{[q)}:m\in
A\wedge n\in B\}\]
\[=\{\widehat{(m,n)}(q):m\in A \wedge n\in B\}=\widehat{A\times
B}(p)(q) \]
\end{proof}

\begin{corol} 
 \[\Vdash_p \widehat{\omega}(p)\times\widehat{P(\omega)}(p)=\widehat{\small{(\omega \times
P(\omega))}}(p)\] \[\Vdash_p
\widehat{\textsl{P}(\omega)}(p)\times\widehat{\textsl{P}(\textsl{P}(\omega))}(p)=
\widehat{(\small{\text{P}(\omega)\times\textsl{P}(\textsl{P}(\omega)))}}(p)\]
\end{corol}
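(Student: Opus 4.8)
The plan is to derive this corollary directly from the preceding lemma, which states that $\Vdash_p \widehat{A}(p)\times\widehat{B}(p)=\widehat{(A\times B)}(p)$ for any two sets $A,B$. Since this lemma is completely general in $A$ and $B$, the corollary is obtained simply by two instantiations: first take $A=\omega$ and $B=P(\omega)$, which yields $\Vdash_p \widehat{\omega}(p)\times\widehat{P(\omega)}(p)=\widehat{(\omega\times P(\omega))}(p)$; then take $A=P(\omega)$ and $B=P(P(\omega))$, which yields $\Vdash_p \widehat{P(\omega)}(p)\times\widehat{P(P(\omega))}(p)=\widehat{(P(\omega)\times P(P(\omega)))}(p)$.

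First I would note that $\omega$, $P(\omega)$, and $P(P(\omega))$ are genuine sets in the classical universe $V$, so the hypotheses of the previous lemma are satisfied in each case; there is nothing to check beyond the fact that these are legitimate substitutions. I would then simply write the two equalities as the special cases, perhaps recalling the definition of $f\times g$ and of $\widehat{a}(p)$ only insofar as needed to make the instantiation transparent. If one wanted slightly more detail, one could unwind one of the two cases fibrewise: for $q\geq p$, using Corollary \ref{redef} one has $\widehat{\omega}(p)(q)=\{\widehat{m}(q):m\in\omega\}$ and $\widehat{P(\omega)}(p)(q)=\{\widehat{n}(q):n\in P(\omega)\}$, so that $(\widehat{\omega}(p)\times\widehat{P(\omega)}(p))(q)=\{(\widehat{m}(q),\widehat{n}(q))\upharpoonright_{[q)}:m\in\omega,\ n\in P(\omega)\}=\{\widehat{(m,n)}(q):m\in\omega,\ n\in P(\omega)\}=\widehat{\omega\times P(\omega)}(p)(q)$, where the middle step invokes the lemma giving $\widehat{(a,b)}(p)=(\widehat{a}(p),\widehat{b}(p))$ restricted to $[q)$.

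There is essentially no obstacle here: the corollary is a pure specialization of an already-established general identity, included presumably because these two particular Cartesian products (and their canonical copies inside $V(p)$) will be exactly the ones needed when constructing the generic injection witnessing $\neg CH$ in the next section. The only thing to be careful about is purely notational, namely matching the hatted names $\widehat{(\omega\times P(\omega))}(p)$ and $\widehat{(P(\omega)\times P(P(\omega)))}(p)$ with the output of the lemma, which is a triviality. I would therefore keep the proof to one or two lines, stating that both equalities follow immediately from the previous lemma by taking $A=\omega,\ B=P(\omega)$ and $A=P(\omega),\ B=P(P(\omega))$ respectively.
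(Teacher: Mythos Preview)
Your proposal is correct and matches the paper's approach exactly: the paper states this corollary with no proof, treating it as an immediate specialization of the preceding lemma $\Vdash_p \widehat{A}(p)\times\widehat{B}(p)=\widehat{(A\times B)}(p)$. Your one-line argument by instantiation is precisely what is intended, and the optional fibrewise unwinding you sketch simply replays the lemma's own proof in the special case.
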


Given $A,B\in V(p)$ an object $f$ which is to be forced as a function from $A$ to $B$ has to satisfy:
\[\Vdash_p\forall x(x\in f\rightarrow x\in A\times
B)\] 
\[\Vdash_p ``f \text{ is a function".}\]

Therefore $f$ has to satisfy:

\[\Vdash_p\forall x(x\in f\rightarrow x\in A\times
B)\]

and

\[\Vdash_p\forall
x\forall a\forall b(x\in A\rightarrow((\exists y(y\in B\wedge
(x,y)\in f))\wedge ((a\in B\wedge b\in B\wedge(x,a)\in f\wedge
(x,b)\in f)\rightarrow a=b))).\]

The first part means that given $x\in V(p)$ and $q\geq p$ if $x\in f(q)$ then $x\in (A\times B)(q)$ therefore $\Vdash_q x=(a,b)$ for some $a\in A(q)$ and $b\in B(q)$. The second part means that given $z,a,b \in V(p)$ and $q\geq p$, if $z\in A(q)$ there exists $y\in V(q)$ such that $y\in B(q)$ and $\Vdash_q (z,y)\in f$; and  that if $a\in B(q)$, $b\in B(q)$, $\Vdash_q (z,a)\in f$ and $\Vdash_q (z,b)\in f$ then $a=b$. We have then:

\begin{teor}\label{function}
Let $A, B\in V(p)$, then $f\in V(p)$ is forced  at $p$as a function from $A$ to $B$ if and only if for every $q\geq p$: 
\[f_q:A(q)\rightarrow B(q)\]
\[f_q(a)=b \text{ where }b\text{ satisfies }\Vdash_q(a,b)\in f\]
is a function.
\end{teor}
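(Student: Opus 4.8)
The plan is to unwind both directions of the equivalence directly from the Kripke--Joyal clauses and the definition of $\times$, $(\cdot,\cdot)$, and $\widehat{\cdot}$ established above, reducing everything to statements about the ordinary fibre-structures $V(q)$. First I would fix $A,B\in V(p)$ and an object $f\in V(p)$, and record exactly what ``$f$ is forced as a function from $A$ to $B$ at $p$'' unwinds to, using the two displayed formulas preceding the statement: namely (a) for every $q\geq p$ and every $x\in V(q)$, if $x\in f(q)$ then $\Vdash_q x=(a,b)$ for some $a\in A(q)$, $b\in B(q)$; (b) for every $q\geq p$ and every $a\in A(q)$ there is $b\in B(q)$ with $\Vdash_q (a,b)\in f$; and (c) for every $q\geq p$, every $a\in A(q)$ and all $b,b'\in B(q)$, if $\Vdash_q (a,b)\in f$ and $\Vdash_q (a,b')\in f$ then $b=b'$. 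The subtlety to keep track of here is the quantifier clauses 5 and 7 of the forcing definition, which quantify over \emph{all} $r\geq q$ and all sections defined there; I would use Corollary \ref{localtruth} (local truth) to see that these ``for all extensions'' conditions are equivalent to the pointwise conditions at each $q$, which is what makes the clean reformulation (a)--(c) legitimate.

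Next, for the forward direction, assume $f$ is forced as a function from $A$ to $B$ at $p$ and fix $q\geq p$. Condition (b) gives, for each $a\in A(q)$, at least one $b\in B(q)$ with $\Vdash_q(a,b)\in f$, and condition (c) gives uniqueness of such $b$; hence the assignment $f_q(a):=$ the unique $b$ with $\Vdash_q(a,b)\in f$ is a well-defined function $A(q)\to B(q)$. I would also note that $\Vdash_q (a,b)\in f$ means $(\widehat{a},\widehat{b})$--style pair, more precisely the section $(a,b)$ of the product lies in $f(q)$; here the Lemma characterizing $(f,g)$ (that $\Vdash_p x\in(f,g)\leftrightarrow x=\{f\}\vee x=\{f,g\}$) and the Lemma on $f\times g$ are used to translate membership of the ordered-pair section into the fibrewise condition. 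For the converse, assume each $f_q:A(q)\to B(q)$ is a function; I would verify (a), (b), (c) in turn. Condition (b) is immediate (take $b=f_q(a)$), condition (c) is functionality of $f_q$, and condition (a) requires that every element of $f(q)$ actually \emph{is} (forced to be) an ordered pair $(a,b)$ with $a\in A(q)$, $b\in B(q)$ --- this is where one must be slightly careful, since a priori $f(q)$ could contain junk; but the standing hypothesis is precisely that $f$ is already known to be forced to satisfy $\forall x(x\in f\rightarrow x\in A\times B)$, i.e. this is built into what ``$f$ is the graph of a candidate function'' means in the statement, so (a) holds by the description of $(A\times B)(q)$ via the $f\times g$ Lemma. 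Finally I would check the coherence/restriction conditions: since $f\in V(p)$, for $r\geq q\geq p$ and $x\in f(q)$ we have $x\restriction_{[r)}\in f(r)$, and one checks $f_r(a\restriction_{[r)})=f_q(a)\restriction_{[r)}$, so the family $(f_q)_{q\geq p}$ is genuinely the fibrewise data of a single object of $V(p)$ --- this is automatic from $f\in V(p)$ together with $A,B\in V(p)$ and the restriction-commutes lemmas for $(\cdot,\cdot)$ and $\widehat{\cdot}$.

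The main obstacle, such as it is, is bookkeeping rather than depth: one has to be disciplined about the difference between the \emph{section} $a\in A(q)$ (an element of the fibre $V(q)$, i.e. a function on $[q)$) and its value, and about the fact that ``$(a,b)\in f$ is forced at $q$'' is a statement about the section $(a,b)$ of $A\times B$ as an element of the set-theoretic object $f(q)$, mediated by the Lemmas on pairs and products above. Once (a)--(c) are correctly extracted from the two displayed first-order conditions preceding the theorem, both implications are short. I would present the extraction of (a)--(c) as a preliminary paragraph (essentially restating the discussion already in the text just before the theorem), then give the two directions in a few lines each, and close by remarking that the restriction conditions needed for $f\in V(p)$ impose no extra constraint beyond $A,B\in V(p)$.
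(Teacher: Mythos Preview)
Your proposal is correct and follows essentially the same approach as the paper: the paper does not give a formal proof environment for this theorem but instead presents it as an immediate consequence of the paragraph preceding it, which unwinds the two displayed forcing conditions (``$f\subseteq A\times B$'' and ``$f$ is a function'') into the pointwise statements at each $q\geq p$ that you labelled (a), (b), (c). Your treatment is in fact more careful than the paper's, in that you explicitly separate the two directions, flag the issue that (a) is not recoverable from the mere well-definedness of the $f_q$ and must be read as part of the standing hypothesis on $f$, and note the restriction-coherence of the family $(f_q)_q$; the paper leaves all of this implicit in the phrase ``We have then''.
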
 

\subsection*{\normalsize{The Axiom of Foundation}}

The axiom of foundation will not be forced in its standard version\footnote{See \cite{villa},\cite{benavides} for an example where the standard version is not forced.} but in an alternative version which is intuitionistically weaker (weak within intuitionistic logic) than the standard one but classically equivalent. The standard version of the Axiom of Foundation (AF) is given by:
\[AF:= \forall x(\exists y(y\in x)\rightarrow\exists y(y\in x\wedge\neg\exists z(z\in x\wedge z\in y))\]
and we will prove:

\begin{teor}
\[\Vdash_p\neg\exists x\neg(\exists y(y\in x)\rightarrow\exists
y(y\in x\wedge \neg\exists z(z\in x\wedge z\in y)))\]
\end{teor}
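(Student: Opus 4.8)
The plan is to unravel the sheaf forcing semantics of the double-negated formula and reduce it to a purely classical statement about the external universe, where the ordinary Axiom of Foundation is available. Recall from clause 4 of the forcing definition that $\Vdash_p \neg\theta$ means there is a hereditary neighbourhood $U$ of $p$ with $\Vdash_q \theta$ failing for every $q\in U$; since the topology is that of hereditary subsets and $[p)$ is the smallest open set containing $p$, proving $\Vdash_p\neg\exists x\neg\chi$ amounts to showing that for \emph{every} $q\geq p$ and every $f\in V(q)$, we have $\Vdash_q \neg\chi(f)$ fails, i.e. that it is \emph{not} the case that $\chi(f)$ is refuted on all of $[q)$. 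Equivalently, for every $q\geq p$ and $f\in V(q)$ there exists $r\geq q$ with $\Vdash_r \chi(f)$, where $\chi(x) := \exists y(y\in x)\rightarrow\exists y(y\in x\wedge\neg\exists z(z\in x\wedge z\in y))$.

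So fix $q$ and $f\in V(q)$; I must produce $r\geq q$ forcing $\chi(f)$. First handle the vacuous case: if there is some $r\geq q$ at which $\Vdash_r\exists y(y\in f)$ fails on a neighbourhood — concretely, if there is $r\geq q$ with $f(s)=\emptyset$ for all $s\geq r$ — then by clause 5 the implication $\chi(f)$ is forced at $r$ vacuously. Otherwise, for every $r\geq q$ there is some $s\geq r$ with $f(s)\neq\emptyset$. Now I invoke the external Axiom of Foundation. Consider, for $r\geq q$, the external set $f(r)\subseteq V_\alpha(r)$ where $\alpha = \mathrm{ran}_q(f)$; pick $r$ so that $f(r)\neq\emptyset$ and among all elements appearing in any $f(s)$, $s\geq r$, choose (using the fact that the ranks $\mathrm{ran}_s(g)$ are ordinals, together with inequality \eqref{rank}) an element $g\in f(r)$ whose rank $\mathrm{ran}_r(g)$ is minimal. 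The claim is that $\Vdash_r g\in f \wedge \neg\exists z(z\in f\wedge z\in g)$, and hence $\Vdash_r\exists y(y\in f\wedge\neg\exists z(z\in f\wedge z\in y))$, which gives $\Vdash_r\chi(f)$. The first conjunct is immediate since $g\in f(r)$. For the second, by clause 4 it suffices to check that on all of $[r)$ the formula $\exists z(z\in f\wedge z\in g)$ fails: if $\Vdash_t \exists z(z\in f\wedge z\in g)$ for some $t\geq r$, then there is a section $z$ defined near $t$ with $z(t)\in f(t)$ and $z(t)\in g(t)$; but $z(t)\in g(t)$ forces $\mathrm{ran}_t(z)<\mathrm{ran}_t(g)\le\mathrm{ran}_r(g)$ (the rank does not grow with the order), while $z\restriction_{[t)}$ restricted appropriately is an element appearing in $f(t)$, contradicting the minimality of $\mathrm{ran}_r(g)$ among elements occurring in $f(s)$ for $s\geq r$.

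The main obstacle, and the point that needs the most care, is the bookkeeping of ranks across the order when choosing the minimal element: one must choose $g$ so that the minimality is robust under passing to $[t)$ for $t\geq r$, using that $\mathrm{ran}_t(g\restriction_{[t)})\le\mathrm{ran}_r(g)$ and that membership strictly decreases rank (inequality \eqref{rank}); a clean way is to fix $r\geq q$ with $f(r)\ne\emptyset$ and let $g\in f(r)$ realize $\min\{\mathrm{ran}_r(h):h\in f(r)\}$, then verify that no $t\geq r$ can exhibit a $z$ with $z(t)\in f(t)\cap g(t)$ because such a $z$ would give $h:=z\restriction_{[r)}$ — wait, $z$ need only be defined near $t$, so one argues directly at $t$: $z(t)\in g(t)$ and $g\in V_{\mathrm{ran}_r(g)+1}(r)$ force $z(t)$ to lie in $V_{\mathrm{ran}_r(g)}(t)$, hence $\mathrm{ran}_t(z)<\mathrm{ran}_r(g)$, yet $z(t)\in f(t)$ means $z$ witnesses an element of $f$ at $t$ of rank $<\mathrm{ran}_r(g)=\min\{\mathrm{ran}_r(h):h\in f(r)\}$; but $f(t)\supseteq\{h\restriction_{[t)}:h\in f(r)\}$ and ranks only drop, so this forces $\mathrm{ran}_t$ of some restriction of an $h\in f(r)$ to already be $<\mathrm{ran}_r(g)$ — no contradiction yet, so in fact one should take the minimum over $\mathrm{ran}_t$ at the \emph{working} node. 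The cleanest fix: given $q$, pass to any $r\geq q$ with $f(r)\neq\emptyset$, work entirely inside $[r)$, and pick $g\in f(r)$ with $\mathrm{ran}_r(g)$ minimal; then for the second conjunct one only ever needs to refute $\exists z(z\in f\wedge z\in g)$ at nodes $t\geq r$, and there $z(t)\in g(t)\subseteq V_{\mathrm{ran}_r(g)}(t)$ together with $z(t)\in f(t)$ would — since $f(t)$'s members of rank $<\mathrm{ran}_r(g)$ can only be restrictions of members of $f(r)$ of rank $\ge\mathrm{ran}_r(g)$, which after restriction still have rank possibly dropping — require a genuine foundation argument on the external $\in$-structure of $\bigcup_{t\ge r} f(t)$; this is exactly where external AF is used, and writing it so the induction/minimality is on external rank in the ground model rather than on the shifting fiber ranks is the crux.
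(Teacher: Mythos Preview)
Your overall strategy---unravel the double negation, split into the vacuous case, and in the non-vacuous case produce a witness using external Foundation---is sound, and it is essentially the contrapositive of what the paper does (the paper supposes no such $r$ exists and builds an infinite descending chain of ranks). However, your execution of the minimality argument has a real gap that you yourself seem to sense but never close.

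The problem is in your ``cleanest fix'': you fix $r\geq q$ with $f(r)\neq\emptyset$ and take $g\in f(r)$ minimizing $\mathrm{ran}_r(h)$ over $h\in f(r)$ only. Then at a later $t\geq r$ you find $z\in f(t)\cap g(t)$ and conclude $\mathrm{ran}_t(z)<\mathrm{ran}_r(g)$. But this does \emph{not} contradict the minimality of $g$, because your minimum was taken over $f(r)$, not over $f(t)$, and the hierarchy condition only guarantees that restrictions of members of $f(r)$ lie in $f(t)$, not the converse: $f(t)$ may contain genuinely new elements of small rank that are not restrictions of anything in $f(r)$. Your parenthetical claim that ``$f(t)$'s members of rank $<\mathrm{ran}_r(g)$ can only be restrictions of members of $f(r)$'' is simply false in general.

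The fix is to minimize over all pairs simultaneously: let $\alpha$ be least such that there exist $r\geq q$ and $g\in f(r)$ with $\mathrm{ran}_r(g)=\alpha$, and take such a pair $(r,g)$. Then for any $t\geq r$ and any $z\in f(t)\cap (g\upharpoonright_{[t)})(t)$ one gets $\mathrm{ran}_t(z)<\mathrm{ran}_t(g\upharpoonright_{[t)})\leq\mathrm{ran}_r(g)=\alpha$ together with $z\in f(t)$, contradicting the choice of $\alpha$. This yields $\Vdash_r g\in f\wedge\neg\exists z(z\in f\wedge z\in g)$ and hence $\Vdash_r\chi(f)$, as desired. The paper avoids this bookkeeping entirely by arguing the other way: assuming failure, it iterates the hypothesis to produce $f_1,f_2,\dots$ with strictly decreasing external ranks, which is impossible.
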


\begin{proof}
We have to show that for every $q\geq p$, $x\in V(Q)$ there exists $r\geq q$ such that for any $t\geq r$, $x(t)\neq \emptyset$ implies the existence of $y'\in x(t)$ such that for every $s\geq t$, $x(s)\cap y'(s)=\emptyset$. Suppose this is not the case, then there exists $q\geq p$ and $f\in V(q)$ such that for any $r\geq q$ there exists $t\geq r$ such that $f(t)\neq \emptyset$ and for all $ y'\in f(t)$ there exists $s\geq t$ such that $f(s)\cap y'(s)\neq \emptyset$. Let $g\in f(t)$, and consider $s\geq t$ such that there exists $f_1 \in f(s)\cap g(s)$. In the same way since $f_1\in f(s)$ there exists $u\geq t$ such that $f(u)\cap f_1(u)\neq \emptyset$. Let $f_2\in f(u)\cap f_1(u)$ and continue the process. We obtain and infinite chain
\[f_1\in f(u)\] \[f_2\in f_1(u)\] \[f_3\in f_2(u_2)\]\[.\]\[.\]\[.\]\[f_{n+1}\in f_n(u_n)\]\[.\]\[.\]\[.\]
which satisfies
\[ran(f_1)>ran(f_1(u_1))>ran(f_2)>...>ran(f_n(u_n))>f_{n+1}>...\]
but this contradicts the classical axiom of foundation on the external model.
 \end{proof}

\subsection*{\normalsize{The Axiom of Choice}}

As with the axiom of foundation, the axiom of choice will not be forced in all its classical formulations, for example its more natural version which asserts the existence of an election function for any family of non empty sets will not be always forced (see \cite{rive},\cite{benavides}). Instead, it will be forced an intuitionistic equivalent formulation to the G\" odel translation of the next version of the axiom of choice:

\begin{center}
AC:=For any family $\mathcal{F}$ of disjoint non empty sets, there exists a set $E$ such that  $E\cap X=\{z\}$ for all $x\in\mathcal{F}$.
\end{center}

In formal language we have,

\[AC:=\forall \mathcal{F}(\forall x\forall y \varphi\rightarrow \psi),\]
where
\[\varphi:= x\in \mathcal{F}\wedge y\in \mathcal{F}\rightarrow ((\exists z(z\in x\wedge z\in y)\rightarrow x=y)\wedge \exists z(z\in x))\]
\[\psi:= \exists E(x\in \mathcal{F} \rightarrow  \exists a(a\in E\wedge a\in x\wedge \forall b(b\in E\wedge b\in x \rightarrow b=a)))\]
We will force $\neg\neg AC^{*}$, where $AC^{*}$ is the formula derived from replace each subformula of the form $\forall x\phi(x)$ in $AC$ by $\forall x\neg\neg \phi^{*}$; that we know is equivalent to the G\"odel translation of AC. Before proving that $\neg\neg AC^{*}$ is forced we still need a further simplification. From intuitionistic logic we know 

\begin{equation}\label{int1}
\Vdash_p \neg\neg (\alpha\rightarrow \beta) \leftrightarrow \alpha \rightarrow \neg\neg \beta
\end{equation}
\begin{equation}\label{int2}
\Vdash_p \neg\neg \forall x\neg\neg \varphi \leftrightarrow \forall x \neg\neg \varphi
\end{equation}

Then 

\begin{align*}
\Vdash_p \neg\neg AC* \leftrightarrow & \neg\neg \forall \mathcal{F} \neg\neg(\forall x\forall y \varphi\rightarrow \psi)^{*}\\
\leftrightarrow & \neg\neg \forall \mathcal{F} \neg\neg(\forall x \neg\neg(\forall y \varphi \rightarrow \psi)^{*})\\
\leftrightarrow & \neg\neg\forall \mathcal{F} \neg\neg(\forall x \neg\neg(\forall y \neg\neg (\varphi \rightarrow \psi^{*}) \text{ (because $\varphi$ does not contain $\forall$)}\\
\leftrightarrow & \forall \mathcal{F}(\forall x\forall y \varphi \rightarrow \neg\neg \psi^{*}) \text{ (by \ref{int1}, \ref{int2})}
\end{align*}

\begin{teor}
Given $p\in\mathbb{P}$,
\[ \Vdash_p \neg\neg AC^{*}\]
\end{teor}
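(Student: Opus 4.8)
The reduction just performed means that it suffices to prove
\[\Vdash_p\;\forall\mathcal F\bigl(\forall x\forall y\,\varphi\;\rightarrow\;\neg\neg\psi^{*}\bigr).\]
The plan is to unwind this through the Kripke--Joyal clauses and then build the required choice set ``from outside'', using the Axiom of Choice available in the ground universe. First I would fix an arbitrary node $q\geq p$ and an arbitrary $\mathcal F\in V(q)$ with $\Vdash_q\forall x\forall y\,\varphi$; reading $\varphi$ through the semantics this says precisely that for every $t\geq q$ each $X\in\mathcal F(t)$ is forced non-empty at $t$ (equivalently $X(t')\neq\emptyset$ for all $t'\geq t$) and that any two members of $\mathcal F(t)$ sharing an element at $t$ coincide as sections over $[t)$. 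Since over the Alexandrov topology of $\mathbb P$ a $\neg\neg$-statement is exactly one that is forced on a set dense above the node, the goal becomes: for every $r\geq q$ produce $s\geq r$ with $\Vdash_s\psi^{*}$.

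For the construction of the witness $E$ I would fix a well-ordering $\prec$ of a set large enough to contain $\mathcal F$ together with every variable set that can occur as a member of $\mathcal F$ at some node of $[q)$ and every element of such a member's fibres --- there are only set-many of these, since $\mathcal F(t)\subseteq V_{\alpha}(t)$ for the single ordinal $\alpha=ran_q(\mathcal F)$. The key structural fact I would invoke is that, because $\mathcal F\in V(q)$, every element $a$ of a fibre $X(t)$ threads upward, i.e. $a\upharpoonright_{[t')}\in X(t')$ for all $t'\geq t$; hence choosing a single element of $X(t)$ already determines a global section of $X$ over $[t)$. At the node $s$ one then sets, for $t\geq s$,
\[E(t):=\bigl\{\,e_X\upharpoonright_{[t)}\ :\ X\in\mathcal F(s)\,\bigr\},\]
where $e_X$ is the $\prec$-least element of $X(s)$; one checks that $E$ has rank bounded by $ran_s(\mathcal F)$ and is closed under restriction, so $E\in V(s)$. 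Finally one verifies $\Vdash_s\psi^{*}$: for $X\in\mathcal F(s)$ the element $e_X$ witnesses $\exists a(a\in E\wedge a\in x)$, and if some other $b$ lies in $E(t)\cap X(t)$ then $b=e_Y\upharpoonright_{[t)}$ with $e_Y\in Y(s)$ and $b\in X(t)\cap Y(t)$, whence the disjointness clause of the hypothesis forces $X\upharpoonright_{[t)}=Y\upharpoonright_{[t)}$; from this one extracts the $\neg\neg$-uniqueness that $\psi^{*}$ demands in place of the uniqueness clause of $\psi$, exactly in the style used for the other axioms above, now read ``densely''.

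The hard part --- and the reason the theorem is stated for $\neg\neg AC^{*}$ and not for $AC$ itself --- is the passage to the node $s$, i.e.\ guaranteeing that the family $E$ is a genuine element of the cumulative hierarchy. The naive variant, defining $E$ at \emph{every} $t\geq q$ by taking the $\prec$-least element of each fibre $X(t)$, breaks down on two counts: ``$\prec$-least'' does not commute with the restriction maps $X(t)\to X(t')$, and new members of $\mathcal F$ can appear at nodes strictly above $q$; either phenomenon destroys closure under restriction, so the naive $E$ is typically not an element of any $V(q)$. The remedy is to isolate a set, dense above $q$, of nodes $s$ at which neither obstruction propagates --- roughly, nodes above which the relevant portion of $\mathcal F$ is already generated and the $\prec$-selection has stabilized --- and the density of this set is where the bounded-rank observation does the real work, since only set-many member-germs and element-germs are in play, so a closing-off argument reaches such an $s$ above any prescribed $r$. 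Once $s$ is fixed the verification of $\Vdash_s\psi^{*}$ is the routine bookkeeping indicated above, and $\Vdash_p\neg\neg AC^{*}$ follows by density.
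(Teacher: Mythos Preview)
Your reduction to $\forall\mathcal{F}(\forall x\forall y\,\varphi\to\neg\neg\psi^{*})$ and the plan to build the selector externally using the ambient Axiom of Choice match the paper, but the construction of $E$ diverges from the paper's, and the divergence contains a real gap. The paper does not search for ``stable'' nodes at all. It well-orders the entire set $\bigcup_{r\geq p}\mathcal{F}(r)$ as $\{X_\alpha:\alpha<\gamma\}$ and runs a transfinite recursion: at stage $\alpha$ either skip (if a choice for $X_\alpha$ has already been induced as the restriction of an earlier $x_{\alpha'}$) or pick some $x_\alpha\in X_\alpha(\cdot)$ and simultaneously declare $x_\beta:=x_\alpha\upharpoonright_{[r)}$ for every $X_\beta=X_\alpha\upharpoonright_{[r)}$. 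This propagation step is exactly what forces the resulting $E$ to be closed under restriction \emph{globally}, so $E\in V(q)$ outright; no passage to a special node is needed, and the $\neg\neg$ in front of $\psi^{*}$ is invoked only at the end, for the uniqueness clause.

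Your proposed remedy, by contrast, asks for a dense set of nodes $s$ ``above which the relevant portion of $\mathcal{F}$ is already generated and the $\prec$-selection has stabilized''. Such nodes need not exist. The bounded-rank observation tells you only that $\bigcup_{r}\mathcal{F}(r)$ is a \emph{set}; it gives no stabilization along $\mathbb{P}$. Over $\mathbb{P}=\omega$, for instance, one can arrange a family $\mathcal{F}$ (still of bounded rank) that acquires a genuinely new member at every level; then for every $s$ there is $t>s$ and $Y\in\mathcal{F}(t)$ which is not the restriction of anything in $\mathcal{F}(s)$, your $E$ satisfies $E(t)\cap Y(t)=\emptyset$, and $\psi^{*}$ fails at $s$. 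A ``closing-off argument'' of the kind you sketch would require a cofinality hypothesis on $\mathbb{P}$ that is simply not available here. The fix is precisely the paper's move: index the selector by all of $\bigcup_r\mathcal{F}(r)$ rather than by a single fibre $\mathcal{F}(s)$, and enforce coherence under restriction by hand during the recursion, so that the two obstructions you correctly identified never arise in the first place.
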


\begin{proof}
Let $\mathcal{F},X, Y \in V(p)$ and for $q\geq p$ suppose that $\Vdash_q\varphi$, we want to show that $\Vdash_q \neg\neg \psi^{*}$. By \ref{int1} we have
\begin{equation}\label{int3}
\Vdash_q \psi^{*} \leftrightarrow \exists E(X\in\mathcal{F}\rightarrow \exists a(a\in E\wedge a\in X\wedge \forall b(b\in E\wedge b\in X\rightarrow \neg\neg (b=a))).
\end{equation}
Consider a well order on $\bigcup_{q\geq p} \mathcal{F}(q)$, then we can write
\[\bigcup_{q\geq p} \mathcal{F}(q)=\{x_{\alpha}: \alpha <\gamma\}.\]
We will choose elements of $\{X_{\alpha}(q)\}_{\alpha\in\gamma}$ in the next way. Given $\alpha$ if we already choose one element of $X_{\alpha}(q)$ we do not do anything and we pass to $X_{\alpha+1}$, otherwise take $x_{\alpha}$ in $X_{\alpha}(q)$ and take $x_{\beta}=x_{\alpha}\upharpoonright_{[r)}$ in $X_{\beta}=X_{\alpha}\upharpoonright_{[r)}$. Define
\[E:[q)\rightarrow \bigcup_{r\geq q} V(q)\]
\[ E(q)=\{x_{\alpha}:X_{\alpha}\in \mathcal{F}(q)\}\]
We have $E(q)\subset V(q)$ because $X_{\alpha}(q)\subset V(q)$ for any $\alpha$, on the other hand given $r\geq q$, if $x_{\alpha}\in E(q)$ we have $x_{\alpha}\upharpoonright _{[r)}$ is the choose element of $X_{\alpha}(r)$ by construction then $x_{\alpha}\upharpoonright _{[r)}\in E(r)$; thus we can conclude that $E\in V(q)$. Note also that by construction we have that $X_{\alpha}(q)\cap E(q)$ contain just one element.  To show that $\Vdash_q\neg\neg \psi^{*}$ we need to find $t\geq r\geq q$ such that $\Vdash_t \psi^{*}$. Given $X\in \mathcal{F}(q)$ then $X\upharpoonright \in \mathcal{F}(r)$, therefore, $X\upharpoonright_{[r)}=X_{\alpha}$ for some $\alpha\in\gamma$. Thus, there exists $t\geq r$ and $X_{\beta}\in \mathcal{F}(t)$ such that $X_{\beta}=X_{\alpha}\upharpoonright_{[t)}$. In the step $\beta$ we choose an element  $x_{\beta}\in X_{\beta}(t)$, $x_{\beta}\in E(t)$ and $x_{\beta}\in X(t)=X_{\alpha}(t)=X_{\beta}(t)$ and is the unique element which satisfies both conditions, thus we have shown that 
\[\Vdash_t \exists E(X\in\mathcal{F}\rightarrow \exists a(a\in E\wedge a\in X\wedge \forall b(b\in E\wedge b\in X\rightarrow \neg\neg (b=a))),\]
by \ref{int3} we have the result.
\end{proof}

From the above lemmas follow then

\begin{teor}
Given $\mathbb{P}$ a partial order with the topology of the hereditary subsets we have
\[\mathbb{V}\Vdash_{\mathbb{P}} ZFC,\]
or in ther words, for every $p\in\mathbb{P}$, $\Vdash_p ZFC$.
\end{teor}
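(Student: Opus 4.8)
The plan is to assemble the theorem directly from the lemmas and theorems already proved in Section 2.3, since each one establishes exactly that a single axiom (or axiom schema instance) of $ZFC$ is forced at every node $p\in\mathbb{P}$. First I would recall that by Corollary \ref{localtruth} forcing is local, so verifying $\Vdash_p\varphi$ for all $p$ is the same as verifying $\mathbb{V}\Vdash_{\mathbb{P}}\varphi$, and that for each axiom the corresponding lemma was stated with the quantifier ``for every $p\in\mathbb{P}$'' (or its proof visibly works at an arbitrary $p$), so no uniformity issue arises. I would then simply run through the list: Existence, Extensionality, Comprehension (schema), Pairing, Union, Power Set, Infinity (via the corollary that $\widehat{\omega}(p)$ witnesses an inductive set), Replacement (schema), the forced intuitionistic form of Foundation, and the forced form $\neg\neg AC^{*}$ of Choice.

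The one genuinely substantive point to make explicit is that the weakened versions of Foundation and Choice that were actually forced are acceptable as ``$ZFC$'' from the classical standpoint. Here I would invoke what was asserted when those axioms were introduced: the forced statement $\Vdash_p\neg\exists x\neg(\dots)$ is the G\"odel (double-negation) translation of the standard Foundation axiom $AF$, and $\neg\neg AC^{*}$ was shown equivalent to the G\"odel translation of $AC$; since the Fundamental Theorem of Model Theory \ref{ftmt} reads off truth in the generic model $\frak{A}[\mathcal{F}]$ from the forcing of the G\"odel translation $\varphi^{G}$, forcing $\varphi^{G}$ at every node is precisely what is needed to conclude $\frak{A}[\mathcal{F}]\models\varphi$. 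Thus ``$\mathbb{V}\Vdash_{\mathbb{P}}ZFC$'' should be understood as: for every axiom $\varphi$ of $ZFC$, its G\"odel translation is forced at every node — and for the axioms other than Foundation and Choice the translation is intuitionistically equivalent to the axiom itself, which is what the individual lemmas proved.

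Concretely the proof is just: \emph{By the lemmas of this subsection, for each axiom $\varphi\in ZFC$ we have $\Vdash_p\varphi^{G}$ for every $p\in\mathbb{P}$; for Existence, Extensionality, Pairing, Union, Power Set, Infinity, and every instance of Comprehension and Replacement, $\varphi^{G}$ is (intuitionistically, hence after forcing) equivalent to $\varphi$ and the corresponding lemma gives $\Vdash_p\varphi$ directly; for Foundation and Choice the cited theorems give $\Vdash_p\varphi^{G}$ in the double-negated forms exhibited. By Corollary \ref{localtruth} this is the same as $\mathbb{V}\Vdash_{\mathbb{P}}\varphi$ for each such $\varphi$, which is the assertion of the theorem.} I do not expect any real obstacle: the content was done in the preceding lemmas, and the only care required is the bookkeeping remark, made above, that the G\"odel-translated forms of Foundation and Choice are the relevant ones so that later, via Theorem \ref{ftmt}, the collapsed generic models satisfy full classical $ZFC$.
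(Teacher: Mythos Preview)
Your proposal is correct and matches the paper's own treatment: the paper gives no separate proof but simply writes ``From the above lemmas follow then'' before stating the theorem, so the argument is exactly the assembly of the preceding lemmas that you describe. Your additional bookkeeping about G\"odel translations (anticipating Theorem~\ref{ftmt}) is more explicit than the paper, though note that for the standard axioms you only need the intuitionistically valid implication $\varphi\to\varphi^{G}$, not full equivalence.
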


And from the fundamental theorem of model theory \ref{ftmt} we have:

\begin{teor}
Given $\mathbb{P}$ a partial order with the topology of the hereditary subsets and $\mathcal{F}$ a generic filter over $\mathbb{P}$,
\[\mathbb{V}[\mathcal{F}]\models ZFC.\]
\end{teor}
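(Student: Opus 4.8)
The plan is to read the theorem off the already established fact $\mathbb{V}\Vdash_{\mathbb{P}}ZFC$ by means of the Fundamental Theorem of Model Theory (Theorem~\ref{ftmt}). Since $\mathbb{V}[\mathcal{F}]$ is, by construction, an ordinary classical $\{\in\}$-structure, proving $\mathbb{V}[\mathcal{F}]\models ZFC$ means proving $\mathbb{V}[\mathcal{F}]\models\varphi$ for every axiom $\varphi$ of ZFC. Fix such a $\varphi$. By Theorem~\ref{ftmt}, $\mathbb{V}[\mathcal{F}]\models\varphi$ holds as soon as there is $U\in\mathcal{F}$ with $\mathbb{V}\Vdash_{U}\varphi^{G}$; and since $\mathbb{P}\in\mathcal{F}$ for every filter of open sets, it suffices to do this with $U=\mathbb{P}$, i.e.\ to show $\mathbb{V}\Vdash_{\mathbb{P}}\varphi^{G}$, equivalently $\mathbb{V}\Vdash_{p}\varphi^{G}$ for all $p\in\mathbb{P}$. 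So everything reduces to one claim: the G\"odel translation of each ZFC axiom is forced at every node of $\mathbb{V}^{\mathbb{P}}$.

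First I would dispatch the Axiom of Choice and the Axiom of Foundation, which the previous section treated essentially in translated form. There it was shown $\Vdash_{p}\neg\neg AC^{*}$ for every $p$, and the syntactic computation carried out just before that theorem identifies $\neg\neg AC^{*}$ with an intuitionistic equivalent of $AC^{G}$; hence $\mathbb{V}\Vdash_{\mathbb{P}}AC^{G}$. Likewise it was shown $\Vdash_{p}\neg\exists x\neg(\exists y(y\in x)\rightarrow\exists y(y\in x\wedge\neg\exists z(z\in x\wedge z\in y)))$ for every $p$; since $\neg\exists x\,\neg\theta$ is intuitionistically equivalent to $\forall x\,\neg\neg\theta$, and $\neg\neg\alpha\rightarrow\neg\neg\beta$ is intuitionistically equivalent to $\neg\neg(\alpha\rightarrow\beta)$, a routine manipulation of double negations shows this formula is intuitionistically equivalent to $AF^{G}$, so $\mathbb{V}\Vdash_{\mathbb{P}}AF^{G}$. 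In both cases Theorem~\ref{ftmt} with $U=\mathbb{P}$ yields $\mathbb{V}[\mathcal{F}]\models AC$ and $\mathbb{V}[\mathcal{F}]\models AF$.

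It then remains to treat Existence, Extensionality, Comprehension, Pairing, Union, Power Set, Infinity and Replacement, for which the previous section proved the \emph{standard} forms $\Vdash_{p}\varphi$, so one must still pass from $\varphi$ to $\varphi^{G}$. The point is that each of these axioms has the geometric shape $\forall\vec{w}\,\exists\vec{y}\,\theta$ with $\theta$ built from atomic $\{\in,=\}$-formulas by $\wedge,\vee,\rightarrow,\neg$, and the proofs already given are constructive in exactly the way needed: the witnessing sets $\vec{y}$ are produced as sections defined on all of $[p)$, with the defining equivalences holding at \emph{every} $q\geq p$, not merely on an open cover. Using Corollary~\ref{localtruth} together with this uniformity one checks, by induction on the build-up of $\theta$, that $\Vdash_{q}\theta$ for all $q\geq p$ entails $\Vdash_{q}\theta^{G}$ for all $q\geq p$, and then that $\Vdash_{p}(\forall\vec{w}\,\exists\vec{y}\,\theta)^{G}$. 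I expect the main obstacle to be precisely here: discharging the implications hidden inside the biconditionals $w\in z\leftrightarrow\cdots$ and inside the subformula $z\subseteq x\equiv\forall w(w\in z\rightarrow w\in x)$ of Power Set, where one must know that the relevant instances of $\varphi\rightarrow\varphi^{G}$ are intuitionistically valid; this holds for the syntactic class that occurs here, but it is the step that has to be verified with care rather than waved through.

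Collecting the three cases, $\mathbb{V}\Vdash_{\mathbb{P}}\varphi^{G}$ for every axiom $\varphi$ of ZFC; by Theorem~\ref{ftmt} applied with $U=\mathbb{P}\in\mathcal{F}$ we obtain $\mathbb{V}[\mathcal{F}]\models\varphi$ for every axiom of ZFC, that is, $\mathbb{V}[\mathcal{F}]\models ZFC$.
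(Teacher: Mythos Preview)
Your proposal is correct and follows the same route as the paper: deduce the result from the previously established $\mathbb{V}\Vdash_{\mathbb{P}}ZFC$ via the Fundamental Theorem of Model Theory (Theorem~\ref{ftmt}). The paper in fact gives no separate argument at all---it states the theorem as an immediate consequence of Theorem~\ref{ftmt}---whereas you go further and explicitly confront the point that Theorem~\ref{ftmt} requires $\varphi^{G}$ rather than $\varphi$, checking that the versions of Foundation and Choice actually forced are (intuitionistically equivalent to) the G\"odel translations, and that the remaining axioms, forced in their standard forms, can be upgraded to their translations; this is a genuine detail the paper leaves implicit.
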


\section{The independence of the Continuum Hypothesis}

We can finally give an alternative and more simple proof of the independence of the Continuum Hypothesis.  The Continuum Hypothesis in its original version, i.e. as  postulated by Cantor, asserts  that every subset of the real numbers is enumerable or it has the cardinality of the Continuum, $2^{\aleph_0}$. From the Dedekind construction of the real numbers we know that the cardinality of the real numbers is the same as the cardinality of the set of subsets of $\mathbb{N}$. By the Cantorian diagonal  argument we now $|\mathbb{N}|<|P(\mathbb{N})|$, thus another formulation of the Continuum hypothesis would  be to say that it does not exists a set which cardinality lies between $|\mathbb{N}|$ and $|P(\mathbb{N})|$. We will find an order $\mathbb{P}$ and a variable set $B$ such that for every $p\in \mathbb{P}$, 
\[\Vdash_p |N|\leq |B|\leq |P(\mathbb{N})|.\]
Thus for any generic filter over $\mathbb{P}$ the inequality will be still valid in the classical collapsed model. We start introducing an important object which corresponds in this context to the so called subobject classifier.

\subsection{The Subobject Classifier}

We know there is a correspondence between the elements of $P(\mathbb{N})$ and the elements of $2^{\mathbb{N}}$, given by the assignation to each set  $S\subseteq \mathbb{N}$ of its characteristic function 
\[\chi_{_S}:\mathbb{N}\rightarrow 2\] 
\[\chi_{_S}(n)=\begin{cases} 0 & \text{ if } n\notin S\\
  1 & \text{ if } n\in S\end{cases}.\]
In this definition the values 0, 1 are representing the truth values in classical logic of the proposition $n\in S$. The subobject classifier will be an object that play an analogous role to 2 in the definition of the characteristic function, but over our variable sets. To construct this object remember that the truth values in the sheaves of structures are given by the open sets  of the base space (see the end of section \ref{lss}). Taking $\mathbb{P}$ a partial order with the topology of hereditary subsets, the truth values are precisely the hereditary subsets. If we denote by $\mathbb{P}^{+}$ the set of hereditary subsets, the object that will define the variable set of these truth values is given by:

\[\Omega:\mathbb{P}\rightarrow \mathbb{P}^{+}\]
\[\Omega(p)=[p)^{+},\]
 
Where $[p)^{+}$ is the set which elements are the hereditary subsets of $[p)$. To see that $\Omega$ play an analogous role as 2 in the definition of the characteristic function we will show the next result 
\[ \Vdash_p |\widehat{\Omega}(p)(p)^{\mathbb{N}}|=|P(\mathbb{N})|, \text{ for all } p\in \mathbb{P},\]
where in the above proposition $\mathbb{N}$ and $P(\mathbb{N})$ are denoting the sets forced as the natural numbers (the minimum inductive set) and its generalized power set. We start showing which is the set forced as the  natural numbers. 

\begin{lema}[$\widehat{\mathbb{N}}(p)$ is the minimum inductive set]
\[\Vdash_p \forall S(\widehat{\emptyset}(p)\in S\wedge (\forall x(x\in S\rightarrow Suc(x)\in S)))\rightarrow \widehat{\mathbb{N}}(p)\subseteq S\]
\end{lema}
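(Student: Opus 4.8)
The plan is to show that the variable set $\widehat{\mathbb{N}}(p)$, which is by definition the canonical image of the classical $\omega$, is contained in every $S \in V(p)$ that is forced to be inductive, where ``inductive'' means $\Vdash_p \widehat{\emptyset}(p) \in S$ and $\Vdash_p \forall x(x \in S \rightarrow Suc(x) \in S)$. Since a previous lemma already establishes that $\widehat{\omega}(p)$ itself is inductive, this will identify $\widehat{\mathbb{N}}(p)$ with $\widehat{\omega}(p)$. The core of the argument is an induction on $n \in \omega$ in the external (classical) metatheory, showing that for every inductive $S$ and every $q \geq p$ we have $\widehat{n}(q) \in S(q)$; unwinding the forcing clause for $\subseteq$ then gives $\Vdash_p \widehat{\mathbb{N}}(p) \subseteq S$.

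First I would unpack what needs to be proved: fix $q \geq p$ and $S \in V(q)$ with $\Vdash_q \widehat{\emptyset}(q) \in S$ and $\Vdash_q \forall x(x \in S \rightarrow Suc(x) \in S)$ (using that forcing is hereditary along the order, Corollary \ref{localtruth}, the hypotheses at $p$ descend to any such $q$). I must show $\Vdash_q \widehat{\mathbb{N}}(q) \subseteq S$, i.e. that for all $r \geq q$ and all $g \in V(r)$, $g \in \widehat{\mathbb{N}}(q)(r)$ implies $g \in S(r)$. By Corollary \ref{redef}, $\widehat{\mathbb{N}}(q)(r) = \{\widehat{n}(r) : n \in \omega\}$ (recalling $\mathbb{N}$ and $\omega$ name the same classical set), so it suffices to show $\widehat{n}(r) \in S(r)$ for every $n \in \omega$ and every $r \geq q$.

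The main step is the external induction on $n$. For the base case $n = 0$: $\Vdash_q \widehat{\emptyset}(q) \in S$ means $\widehat{\emptyset}(q) \in S(q)$, and since $S \in V(q)$ the closure-under-restriction condition in the definition of the hierarchy gives $\widehat{\emptyset}(q)\upharpoonright_{[r)} = \widehat{\emptyset}(r) \in S(r)$ for all $r \geq q$; and $\widehat{0}(r) = \widehat{\emptyset}(r)$. For the inductive step, assume $\widehat{n}(r) \in S(r)$ for all $r \geq q$. Fix $r \geq q$. Applying the forcing clause for $\forall$ and $\rightarrow$ to the hypothesis $\Vdash_q \forall x(x \in S \rightarrow Suc(x) \in S)$ at the node $r$ with the section $\widehat{n}(r)$ (which is defined on $[r)$ and satisfies $\Vdash_r \widehat{n}(r) \in S$ by the inductive hypothesis), one obtains $\Vdash_r Suc(\widehat{n}(r)) \in S$, i.e. $Suc(\widehat{n}(r)) \in S(r)$. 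Now I invoke the computation already carried out in the proof that $\widehat{\omega}(p)$ is inductive, namely $Suc(\widehat{n}(r)) = \widehat{n+1}(r)$ (their values on any $r' \geq r$ agree), so $\widehat{n+1}(r) \in S(r)$, completing the induction. Running over all $n$ and all $r \geq q$ gives $\Vdash_q \widehat{\mathbb{N}}(q) \subseteq S$, and since $q \geq p$ was arbitrary together with the appropriate quantifier clause this yields the statement at $p$.

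The step I expect to be the main (minor) obstacle is the careful bookkeeping around the forcing clause for $\forall$ and $\rightarrow$ in clauses 5 and 7 of the semantics: those clauses quantify over \emph{all} larger nodes and \emph{all} sections defined there, so I must make sure that ``$\Vdash_r \widehat{n}(r) \in S$'' is genuinely available before extracting $\Vdash_r Suc(\widehat{n}(r)) \in S$, and that the neighbourhood witnessing the universal statement can be taken to be all of $[r)$. This is routine given Corollary \ref{localtruth} and the fact that over a partial order with the hereditary topology the basic open sets $[r)$ are exactly the principal up-sets, but it is where the argument must be written with some care rather than waved through.
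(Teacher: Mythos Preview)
Your proposal is correct and follows essentially the same route as the paper: both arguments reduce the forcing of $\widehat{\mathbb{N}}(p)\subseteq S$ to the external statement that $\widehat{n}(r)\in S(r)$ for every $n\in\omega$ and every relevant $r$, and then establish this by an external induction on $n$ using $Suc(\widehat{n}(r))=\widehat{n+1}(r)$. The only cosmetic difference is that the paper phrases the induction as a least-counterexample argument (take the minimal $n$ with $\widehat{n}(r)\notin f(r)$ and derive a contradiction) at a fixed $r$, whereas you run a direct induction on $n$ uniformly in $r$; and the paper keeps the two quantifier layers $q$ and $t$ (for $\forall S$ and for $\rightarrow$) explicit, while you collapse them into a single $q$. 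Your parenthetical ``the hypotheses at $p$ descend to any such $q$'' is slightly misworded---there are no hypotheses at $p$, rather you are absorbing the implication-layer node into the universal-layer node---but the manoeuvre itself is sound and you correctly flag this bookkeeping as the only delicate point.
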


\begin{proof}
We want to show that given $q\geq p$ and $f\in V(q)$ we have:
\[\Vdash_q (\widehat{\emptyset}(p)\in f\wedge (\forall x(x\in f\rightarrow Suc(x)\in f)))\rightarrow \widehat{\mathbb{N}}(p)\subseteq f,\]
that means to show that given $t\geq q$ if $\Vdash_t(\widehat{\emptyset}(t)\in f\wedge (\forall x(x\in f\rightarrow Suc(x)\in f)))$ then $\Vdash_t \widehat{N}(t)\subseteq f$. Suppose $\Vdash_t(\widehat{\emptyset}(t)\in f\wedge (\forall x(x\in f\rightarrow Suc(x)\in f)))$, then given $r\geq t$,  we have that  $f$ satisfies   $\widehat{\emptyset}(r)\in f(r)$ and  for every $x\in V(r)$  if $x\in f(r)$ then $Suc(x)\in f(r)$.\\
Now, we want to show that $\widehat{\mathbb{N}}(r)(r)\subseteq f(r)$, suppose this is not the case. Let $n$  minimum such that $\widehat{n}(r)\notin f(r)$, since $\widehat{\emptyset}(r)\in f(r)$ then $n\neq 0$. Let $m$ such that $m+1=n$, then $\widehat{m}(r)\in f(r)$ therefore $Suc(\widehat{m}(s))\in f(s)$ but $Suc(\widehat{m}(s))=\widehat{m+1}(s)=\widehat{n}(s)$ a contradiction. Let $x\in \widehat{\mathbb{N}}(r)(r)$ then $x\in f(r)$, since $r\geq t$ was arbitrary we have
\[\Vdash_t \forall x(x\in \widehat{\mathbb{N}}(t)\rightarrow x\in f).\]
\end{proof}

\begin{corol}
$\Vdash_p \mathbb{N}=\widehat{\mathbb{N}}(p)$ for all $p\in\mathbb{P}$.
\end{corol}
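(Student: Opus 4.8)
The plan is to derive this immediately from the two preceding lemmas together with the already established forcing of the Axioms of Infinity, Comprehension and Extensionality. Recall that $\mathbb{N}$ denotes the object forced at $p$ as the \emph{minimum inductive set}: the forced Axiom of Infinity produces an inductive set and the forced Comprehension schema then carves out the intersection of all inductive sets, so that $\Vdash_p$``$\mathbb{N}$ is inductive'' and $\Vdash_p\forall S((\widehat{\emptyset}(p)\in S\wedge\forall x(x\in S\rightarrow Suc(x)\in S))\rightarrow\mathbb{N}\subseteq S)$, where $A\subseteq B$ abbreviates $\forall z(z\in A\rightarrow z\in B)$.

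First I would observe that, since $\omega=\mathbb{N}$ in the external model, the lemma asserting that $\widehat{\omega}(p)$ is inductive reads $\Vdash_p$``$\widehat{\mathbb{N}}(p)$ is inductive'', and the lemma just proved reads $\Vdash_p\forall S((\widehat{\emptyset}(p)\in S\wedge\forall x(x\in S\rightarrow Suc(x)\in S))\rightarrow\widehat{\mathbb{N}}(p)\subseteq S)$. Instantiating the minimality of $\mathbb{N}$ at $S=\widehat{\mathbb{N}}(p)$ gives $\Vdash_p\mathbb{N}\subseteq\widehat{\mathbb{N}}(p)$; instantiating the minimality of $\widehat{\mathbb{N}}(p)$ at $S=\mathbb{N}$ gives $\Vdash_p\widehat{\mathbb{N}}(p)\subseteq\mathbb{N}$. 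A conjunction of the two forced implications $\forall z(z\in\mathbb{N}\rightarrow z\in\widehat{\mathbb{N}}(p))$ and $\forall z(z\in\widehat{\mathbb{N}}(p)\rightarrow z\in\mathbb{N})$ intuitionistically yields $\forall z(z\in\mathbb{N}\leftrightarrow z\in\widehat{\mathbb{N}}(p))$, and the forced Axiom of Extensionality then gives $\Vdash_p\mathbb{N}=\widehat{\mathbb{N}}(p)$.

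The only thing to watch is that every inference above is performed inside the forcing relation and so must be intuitionistically sound; but modus ponens on a forced implication, universal instantiation, passing from two implications to a biconditional, and appeal to the forced Extensionality axiom are all intuitionistically valid, so no genuine obstacle arises. If one prefers a direct computation, one can instead verify at each node $q\geq p$ that $\widehat{\mathbb{N}}(q)(q)=f(q)$, where $f$ realizes the minimum inductive set at $q$: the inclusion $\widehat{\mathbb{N}}(q)(q)\subseteq f(q)$ is exactly the ``$n$ minimal with $\widehat{n}(q)\notin f(q)$'' argument of the previous lemma, and the reverse inclusion uses that $\widehat{\mathbb{N}}(q)$ is itself inductive; the compatibility of these identifications with the restriction maps follows from the earlier lemma $\widehat{a}(p)\upharpoonright_{[q)}=\widehat{a}(q)$. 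I expect this last bookkeeping to be the most tedious, though entirely routine, part.
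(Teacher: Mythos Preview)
Your proposal is correct and matches the paper's intended argument: the corollary is stated without proof, evidently meant to follow immediately from the preceding lemma (minimality of $\widehat{\mathbb{N}}(p)$) together with the earlier lemma that $\widehat{\omega}(p)$ is inductive, via forced Extensionality exactly as you spell out. Your explicit check that the relevant inferences are intuitionistically valid is a helpful addition the paper omits.
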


On the other hand using the power set axiom, the variable set forced as $P(\mathbb{N})$ is given by 

 \[G:[p)\rightarrow
 \bigcup_{q\geq p}V(q)\]
 \[G(q)=\{h:[q)\rightarrow
 \bigcup_{r\geq q}\textsl{P}(\widehat{\mathbb{N}}(p)(r)):(r\geq q\Rightarrow h(r)\subseteq \widehat{\mathbb{N}}(p)(r))\wedge (t\geq
 r\geq q\Rightarrow \forall i\in h(r), i\upharpoonright_{[t)}\in
 h(t))\}.\]
 
Now we can construct a function that is forced as a bijection between $G$ and $\widehat{\Omega(p)}(p)^{\widehat{\mathbb{N}}(p)}$. Given $H$ such that $\Vdash_p H\in G$ define 
\[\chi(H)=\chi_{_H}:[p)\rightarrow
 \bigcup_{q\geq p}V(q)\]
 \[\chi_{_H}(q)=\{(\widehat{n}(q),\widehat{K}(q)):n\in\mathbb{N}\wedge
K=\{r\geq p:\Vdash_r\widehat{n}(r)\in H\}\},\]
and
\[\chi:[p)\rightarrow\bigcup_{q\geq p}V(q)\]
 \[\chi(q)=\{(H,\chi_{_H}\upharpoonright_{[q)}):H\in G(p)\},\]
where the objects denoted as ordered pairs are the objects forced as the respective ordered pairs.

\begin{lema}
Let $p\in\mathbb{P}$ and $H$ be such that $\Vdash_p H\in G$, then
\[\Vdash_p `` \chi_{H}\text{ is a function from } \widehat{\mathbb{N}}(p) \text{ to } \widehat{\Omega(p)}(p)"\] 
\end{lema}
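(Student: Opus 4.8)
The plan is to apply Theorem~\ref{function}, which characterizes exactly what it means for an object to be forced as a function between two objects of $V(p)$. According to that theorem, $\chi_{_H}$ is forced at $p$ as a function from $\widehat{\mathbb{N}}(p)$ to $\widehat{\Omega(p)}(p)$ if and only if for every $q\geq p$ the induced map
\[(\chi_{_H})_q:\widehat{\mathbb{N}}(p)(q)\rightarrow \widehat{\Omega(p)}(p)(q),\qquad (\chi_{_H})_q(a)=b\ \text{ where }\ \Vdash_q (a,b)\in \chi_{_H},\]
is a well-defined total function. So the real content is to verify, for each $q\geq p$: (i) for every $a\in\widehat{\mathbb{N}}(p)(q)$ there is some $b$ with $\Vdash_q(a,b)\in\chi_{_H}$ (totality); (ii) this $b$ is unique (single-valuedness); and (iii) such a $b$ actually lies in $\widehat{\Omega(p)}(p)(q)$, i.e. it is a hereditary subset of $[q)$, so that the codomain is correct. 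I would also first record that $\chi_{_H}\in V(p)$ and that $\chi_{_H}$ is forced to be a set of ordered pairs all of whose components lie in $\widehat{\mathbb{N}}(p)\times\widehat{\Omega(p)}(p)$; this is a routine rank check analogous to the ones carried out for comprehension, pairing and the power set axiom earlier, using that $\widehat{n}(q)$ and $\widehat{K}(q)$ are genuine elements of $V(q)$ for every $n\in\mathbb{N}$ and every hereditary $K$.

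First I would unwind the definition. By construction $\chi_{_H}(q)=\{(\widehat{n}(q),\widehat{K}(q)):n\in\mathbb{N},\ K=\{r\geq p:\Vdash_r \widehat{n}(r)\in H\}\}$, where the pairs are the objects forced as ordered pairs. Fix $q\geq p$ and an element $a$ of $\widehat{\mathbb{N}}(p)(q)=\widehat{\mathbb{N}}(q)(q)=\{\widehat{n}(q):n\in\mathbb{N}\}$, so $a=\widehat{n}(q)$ for a unique $n$ (uniqueness of $n$ here uses injectivity of $b\mapsto\widehat{b}(q)$, i.e. Step~1 of the monomorphism theorem). Set $K_n=\{r\geq p:\Vdash_r\widehat{n}(r)\in H\}$. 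For totality I take $b=\widehat{K_n}(q)$ and check $\Vdash_q(a,b)\in\chi_{_H}$; for uniqueness I use that if $\Vdash_q(a,b)\in\chi_{_H}$ then (again by injectivity of the $\widehat{\phantom{x}}$ embedding on the first coordinate, together with the characterization of forced ordered pairs from the lemma on $(f,g)$) the second coordinate must equal $\widehat{K_n}(q)$. For the codomain condition (iii) I must check that $K_n$ is a hereditary subset of $[p)$ — equivalently its restriction to $[q)$ is hereditary in $[q)$ — so that $\widehat{K_n}(q)\in\widehat{\Omega(p)}(p)(q)=[q)^{+}$ restricted appropriately; this is exactly the persistence property of forcing, Corollary~\ref{localtruth}: if $\Vdash_r\widehat{n}(r)\in H$ and $r'\geq r$ then $\Vdash_{r'}\widehat{n}(r')\in H$ (using that $\widehat{n}(r)\upharpoonright_{[r')}=\widehat{n}(r')$ and that $H(r)\upharpoonright$ persists), so $K_n$ is upward closed.

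The main obstacle I expect is bookkeeping around the ``objects forced as ordered pairs'': one has to be careful that the equality $\Vdash_q(a,b)\in\chi_{_H}$ really does pin down $b$ uniquely inside $V(q)$, and that it is insensitive to which representative of the forced pair one writes down. This is handled by combining the lemma giving $\Vdash_p\forall x(x\in(f,g)\leftrightarrow(x=\{f\}\vee x=\{f,g\}))$ with extensionality (already shown forced) to recover the Kuratowski encoding, and then with Step~1 of the embedding theorem to decode both coordinates. A second, milder point is making sure the domain is literally $\widehat{\mathbb{N}}(p)$ and not merely something forced equal to $\mathbb{N}$ — but this is immediate since we constructed $\chi_{_H}$ with first coordinates ranging exactly over $\{\widehat{n}(q):n\in\mathbb{N}\}=\widehat{\mathbb{N}}(p)(q)$. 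Once (i)--(iii) are in place, Theorem~\ref{function} applies verbatim and gives $\Vdash_p``\chi_{_H}\text{ is a function from }\widehat{\mathbb{N}}(p)\text{ to }\widehat{\Omega(p)}(p)"$, which is the claim.
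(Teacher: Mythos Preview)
Your proposal is correct and follows essentially the same route as the paper: verify $\chi_{_H}\in V(p)$, check that each $K_n$ is hereditary so that the codomain is $\widehat{\Omega(p)}(p)$, and then invoke Theorem~\ref{function} to conclude. The only minor difference is that the paper derives the upward closure of $K_n$ directly from the persistence clause in the definition of $G$ (the power-set object), whereas you phrase it via Corollary~\ref{localtruth}; these amount to the same thing, and your explicit treatment of single-valuedness through injectivity of $b\mapsto\widehat{b}(q)$ is a detail the paper simply leaves implicit.
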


\begin{proof}
To begin we have to show that $\chi_H\in V(p)$, it is clear by definition that $\chi_H(q)\subseteq V(q)$. On the other hand if $g\in \chi_H(Q)$ then $\Vdash_q g=(\widehat{n}(q),\widehat{K}(q))$ for some $n\in\mathbb{N}$ and $K=\{r\geq p: \Vdash_r \widehat{n}(r)\in H\}$, then for $t\geq  q$
\[g\upharpoonright_{[t)}=(\widehat{n}(q),\widehat{K}(q))\upharpoonright_{[t)}=
\widehat{(n,K)}(q)\upharpoonright_{[t)}=\widehat{(n,K)}(t)=
(\widehat{n}(t),\widehat{K}(t))\] and
\[(\widehat{n}(t),\widehat{K}(t))\in \chi_{_H}(t)=\{(\widehat{n}(t),\widehat{K}(t)):n\in\omega\wedge
K=\{r\geq p:\Vdash_r\widehat{n}(r)\in H\}\}.\] 
Now we have to show that $\chi_H$ is forced as a function from $\widehat{\mathbb{N}}(p)$ to $\widehat{\Omega(p)}(p)$. If  $\Vdash_p (\widehat{n}(p),\widehat{K}(p))\in \chi_H$ then given $r\in K$ we have that $\widehat{n}(r)\in H(r)$, if $s\geq r$ then, by the definition of $G$, $\widehat{r}\upharpoonright_{[s)}=\widehat{n}(s)\in H(s)$ therefore  $s\in K$, $K\in [p)^{+}$, and $\Vdash_p \widehat{K}(p)\in \widehat{\Omega(p)}(p)$. From the definition we have that  $f:\widehat{N}(q)\rightarrow \widehat{\Omega(q)}(q)$ such that $f(\widehat{n}(q))=\widehat{K}(q)$ with $K$ as defined above is a function for all $q\geq p$ then by theorem \ref{function}, $\chi_h$ is forced as a function from $\widehat{N}(p)$ to $\widehat{\Omega(p)}(p)$.
\end{proof}

\begin{teor}
\[\Vdash_p ``\chi\text{ is a bijective function from }G\text{ to } \widehat{\Omega(p)}(p)^{\widehat{\mathbb{N}}(P)}"\]   
\end{teor}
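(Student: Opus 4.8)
The plan is to verify, using Theorem~\ref{function}, that the object $\chi$ is forced at $p$ to be a function $G\to\widehat{\Omega(p)}(p)^{\widehat{\mathbb{N}}(p)}$ and then that this function is both injective and surjective in the local sense required by the sheaf semantics. First I would unwind the definition of $\chi$: for every $q\geq p$ it assigns to each $H\in G(q)$ (after restriction) the object $\chi_H\!\upharpoonright_{[q)}$. The previous lemma already gives $\chi_H\in V(p)$ and that $\chi_H$ is forced as a function from $\widehat{\mathbb{N}}(p)$ to $\widehat{\Omega(p)}(p)$, so each value of $\chi$ indeed lies in the variable set $\widehat{\Omega(p)}(p)^{\widehat{\mathbb{N}}(p)}$; one still has to check $\chi\in V(p)$ (the range condition $\chi(q)\subseteq V(q)$ and the coherence condition under restriction $[q)\to[t)$, which follows from $\chi_H\!\upharpoonright_{[t)}=\chi_{H\upharpoonright_{[t)}}$, an identity that should be extracted from the ordered-pair computations of the previous lemma) and that for each $q\geq p$ the induced map $\chi_q\colon G(q)\to (\widehat{\Omega(p)}(p)^{\widehat{\mathbb{N}}(p)})(q)$, $H\mapsto\chi_H\!\upharpoonright_{[q)}$, is a genuine function; then Theorem~\ref{function} yields that $\chi$ is forced as a function.

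Next I would prove injectivity. Here the key point is the equivalence, for $H,H'\in G$ and $q\geq p$,
\[
\chi_H\!\upharpoonright_{[q)}=\chi_{H'}\!\upharpoonright_{[q)}
\ \Longleftrightarrow\
\text{for all }n\in\mathbb{N},\ \{r\geq q:\Vdash_r\widehat{n}(r)\in H\}=\{r\geq q:\Vdash_r\widehat{n}(r)\in H'\}.
\]
Since $\Vdash_r\widehat{n}(r)\in H$ means $\widehat{n}(r)\in H(r)$, equality of these hereditary sets for all $n$ forces $H(r)=H'(r)$ for all $r\geq q$ (both are subsets of $\widehat{\mathbb{N}}(p)(r)=\{\widehat n(r):n\in\mathbb{N}\}$ by the definition of $G$), i.e. $H\!\upharpoonright_{[q)}=H'\!\upharpoonright_{[q)}$, which is precisely $\Vdash_q H=H'$. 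Tracing this through the definition of $=$ for sheaf structures gives $\Vdash_p\forall H\forall H'(\chi(H)=\chi(H')\to H=H')$.

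For surjectivity I would argue directly at the level of sections: given $q\geq p$ and a section $F$ with $\Vdash_q F\in\widehat{\Omega(p)}(p)^{\widehat{\mathbb{N}}(p)}$, i.e. (by Theorem~\ref{function}) $F$ induces for each $r\geq q$ a function $F_r\colon\widehat{\mathbb{N}}(p)(r)\to\widehat{\Omega(p)}(p)(r)=[r)^{+}$, I would reconstruct a preimage $H\in G(q)$ by setting, for $r\geq q$,
\[
H(r)=\{\widehat{n}(r):n\in\mathbb{N},\ r\in F_r(\widehat{n}(r))\},
\]
using the canonical identification of the truth value $F_r(\widehat n(r))\in[r)^{+}$ with the set of nodes $s\geq r$ where ``$\widehat n\in H$'' holds. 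One checks $H\in G(q)$ (range condition and coherence, using that the $F_r$ cohere and that $F_r(\widehat n(r))$ is hereditary), and then that $\chi_H\!\upharpoonright_{[q)}=F$, perhaps only on a dense open subset of $[q)$, so that by the existential clause of the Kripke--Joyal semantics $\Vdash_q\exists H(\chi(H)=F)$; combined with Corollary~\ref{localtruth} this gives $\Vdash_p\forall F(F\in\widehat{\Omega(p)}(p)^{\widehat{\mathbb{N}}(p)}\to\exists H(H\in G\wedge\chi(H)=F))$. The main obstacle I expect is exactly this surjectivity step: making the correspondence between a forced function $F$ into $\widehat{\Omega(p)}(p)$ and an honest downward/coherent family of subsets $H(r)\subseteq\widehat{\mathbb{N}}(p)(r)$ completely precise, since it requires carefully matching the internal notion of ``truth value as hereditary set of nodes'' (the defining feature of $\Omega$) with the external set-theoretic description of $G$, and checking that the reconstructed $H$ genuinely lies in $G(q)$ rather than merely in $V(q)$.
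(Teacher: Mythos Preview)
Your proposal is correct and follows the same three-part structure (function, injective, surjective) as the paper, with the same construction for the inverse in the surjectivity step: the paper defines $S_X(q)=\{\widehat{n}(q):\Vdash_q\widehat{q}(q)\in X(\widehat{n}(q))\}$, which is exactly your $H(r)=\{\widehat{n}(r): r\in F_r(\widehat n(r))\}$. Two small remarks: your hedge ``perhaps only on a dense open subset'' is unnecessary, since the computation $\chi_{S_X}=X$ holds on the nose at every node; and for injectivity the paper actually proves the intuitionistically weaker contrapositive $\Vdash_p (x\neq y\to\chi_x\neq\chi_y)$ rather than your $\Vdash_p(\chi_x=\chi_y\to x=y)$, but your direct argument goes through as well (indeed $\chi_H\!\upharpoonright_{[q)}=\chi_{H'}\!\upharpoonright_{[q)}$ already forces the full $K$-sets $\{s\geq p:\widehat n(s)\in H(s)\}$ to coincide, by injectivity of $a\mapsto\widehat a(r)$).
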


\begin{proof}
\begin{enumerate}
\item $\Vdash_p ``\chi\text{ is a function from }G \text{ to }\widehat{\Omega(p)}(p)^{\widehat{\mathbb{N}}(p)}$ : Given $q\geq p$ by the above lemma we have $\Vdash_q \chi \subset G\times \widehat{\Omega(p)}(p)$. On the other hand for $H$ such that $\Vdash_q h\in G$, $\chi_H\upharpoonright_{[q)}$ is unique such that $\Vdash_q (H, \chi_h\upharpoonright_{[q)})\in \chi$ then by \ref{function} we have the result.
\item $\Vdash_p \forall x\forall y(x\in G\wedge y\in G\wedge x\neg y)\rightarrow (\chi_x\neq\chi_y)$: Let $t\geq q\geq p$ and suppose for $x,y\in V(t)$ that $\Vdash_t x\in G\wedge y\in G \wedge x\neg y$ then $x,y\in G(t)$ and for all $r\geq t$ , $\nVdash_r x=y$ or in other words for $r\geq t$ there exist $s\geq r$ and $n\in \mathbb{N}$ such that $\Vdash_s \widehat{n}(s)\in x$ and $\nvdash_s \widehat{n}(s)\in y$. We want to show that for $r\geq t$, $\nVdash_r \chi_x=\chi_y$. Let $s, n$ as above then 
\[\Vdash_r\widehat{s}(r)\in\chi_{_x}(\widehat{n}(r))=\{\widehat{q}(r):q\geq
p\wedge\Vdash_q \widehat{n}(q)\in x\}\] 
and
\[\Vdash_r \widehat{s}(r)\notin \chi_y(\widehat{n}(r)).\]
Then $\nVdash \chi_x(\widehat{n}(r))=\chi_y(\widehat{n}(r))$ and $\nVdash_r \chi_x=\chi_y$.
\item $\Vdash_p \forall x(x\in \widehat{\Omega(p)}(p)^{\widehat{\mathbb{N}}(p)}\rightarrow \exists y(y\in G\wedge \chi_y=x)$: Let $X$ such that $\Vdash_p X\in \widehat{\Omega(p)}(p)^{\widehat{\mathbb{N}}(p)}$ and consider 
\[S_X:[p)\rightarrow \bigcup_{q\geq
p}\textsl{P}(\widehat{\mathbb{N}}(p)(q))\]
\[S_X(q)=\{\widehat{n}(q):\Vdash_q\widehat{q}(q)\in X(\widehat{n}(q))\}. \]

Then $\Vdash_p S_X\in G$ and 

\begin{eqnarray*}
\chi{_{_{S_X}}}(q)&=&\{(\widehat{n}(q),\widehat{K}(q)):n\in\mathbb{N}\wedge
K=\{r\geq p:\widehat{n}(r)\in S_X(r)\}\}\\
&=&\{(\widehat{n}(q),\widehat{K}(q)):n\in\mathbb{N}\wedge K=\{r\geq
p:\widehat{n}(r)\in\{\widehat{m}(r):\Vdash_r\widehat{r}(r)\in X(\widehat{m}(r))\}
 \}\}\\
 &=&\{(\widehat{n}(q),\widehat{K}(q)):n\in\mathbb{N}\wedge
K=\{r\geq
p:\Vdash_r\widehat{r}(r)\in X(\widehat{n}(r))\}\}\\
&=&\{(\widehat{n}(q), X(\widehat{n}(q))):n\in\mathbb{N}\}=X(q).
\end{eqnarray*}
We conclude that   $\Vdash_p \chi_{_{S_X}}=X$.
\end{enumerate}
\end{proof}

\subsection{The independence of CH}

We start showing that for all $p\in \mathbb{P}$
\[\Vdash_p |\widehat{\mathbb{N}}(p)|<|\widehat{P(\mathbb{N})}(p)|<|\widehat{P(P(\mathbb{N}))(p)|.}\]

\begin{teor}
\[\Vdash_p \neg \exists f(f\subseteq (\widehat{\mathbb{N}\times P(\mathbb{N})})(p)\wedge  f \text{ is a function }\wedge f \text{ is onto})\] 
\end{teor}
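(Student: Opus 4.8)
The plan is to run Cantor's diagonal argument node by node inside the sheaf $V^{\mathbb{P}}$. Write $\psi(f)$ for the formula ``$f\subseteq\widehat{\mathbb{N}\times P(\mathbb{N})}(p)\wedge f$ is a function $\wedge\; f$ is onto''. By the forcing clause for negation, $\Vdash_{p}\neg\exists f\,\psi(f)$ holds iff $\nVdash_{q}\exists f\,\psi(f)$ for every $q\geq p$, and by the clause for the existential quantifier $\Vdash_{q}\exists f\,\psi(f)$ means that some $f\in V(q)$ satisfies $\Vdash_{q}\psi(f)$ (with $\widehat{\mathbb{N}\times P(\mathbb{N})}(p)\!\upharpoonright_{[q)}=\widehat{\mathbb{N}\times P(\mathbb{N})}(q)$). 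So it suffices to fix an arbitrary $q\geq p$ and $f\in V(q)$, assume $\Vdash_{q}\psi(f)$, and derive a contradiction.

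First I would pass from $\widehat{\mathbb{N}\times P(\mathbb{N})}(q)$ to $\widehat{\mathbb{N}}(q)\times\widehat{P(\mathbb{N})}(q)$ using the corollary that forces these equal, so that $\Vdash_{q}\psi(f)$ says that $f$ is forced at $q$ to be an onto function from $\widehat{\mathbb{N}}(q)$ to $\widehat{P(\mathbb{N})}(q)$. By Theorem \ref{function} the fibre map $f_{q}$, sending $a$ to the $b$ with $\Vdash_{q}(a,b)\in f$, is a well-defined (partial) function from $\widehat{\mathbb{N}}(q)(q)$ to $\widehat{P(\mathbb{N})}(q)(q)$. By Corollary \ref{redef} we have $\widehat{\mathbb{N}}(q)(q)=\{\widehat{n}(q):n\in\mathbb{N}\}$ and $\widehat{P(\mathbb{N})}(q)(q)=\{\widehat{S}(q):S\subseteq\mathbb{N}\}$, and by the monomorphism theorem $a\mapsto\widehat{a}(q)$ is injective; hence $f_{q}$ induces a partial function $\bar f\colon\mathbb{N}\rightharpoonup P(\mathbb{N})$ defined by $\bar f(n)=S\iff f_{q}(\widehat{n}(q))=\widehat{S}(q)$.

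Now I apply the classical diagonal argument to $\bar f$ in the ground model: let $D=\{\,n\in\mathbb{N}: n\notin\bar f(n)\,\}\subseteq\mathbb{N}$, with the convention that $n\notin\bar f(n)$ whenever $\bar f(n)$ is undefined, so that $D\neq\bar f(n)$ for every $n$ in the domain of $\bar f$. Since $D\subseteq\mathbb{N}$ we have $\widehat{D}(q)\in\widehat{P(\mathbb{N})}(q)(q)$, i.e.\ $\Vdash_{q}\widehat{D}(q)\in\widehat{P(\mathbb{N})}(q)$. Unwinding the forcing of ``$f$ is onto'' at the node $q$ (the $\forall$, $\rightarrow$ and $\exists$ clauses, all evaluated at $q$ itself) then yields some $a\in V(q)$ with $\Vdash_{q}a\in\widehat{\mathbb{N}}(q)$ and $\Vdash_{q}(a,\widehat{D}(q))\in f$. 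The first of these forces $a=\widehat{n}(q)$ for some $n\in\mathbb{N}$, and the second, via Theorem \ref{function}, gives $f_{q}(\widehat{n}(q))=\widehat{D}(q)$, i.e.\ $\bar f(n)=D$, contradicting the choice of $D$. Hence $\nVdash_{q}\psi(f)$; since $q\geq p$ and $f\in V(q)$ were arbitrary, $\Vdash_{p}\neg\exists f\,\psi(f)$.

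The only delicate part, exactly as in the earlier lemmas of this section, is the bookkeeping that converts the internal assertions (``$(a,b)\in f$'' with $(a,b)$ the \emph{forced} ordered pair, ``$f$ is a function'', ``$f$ is onto'') into the external fibrewise statements about $f_{q}$ and $\bar f$; Theorem \ref{function} together with Corollary \ref{redef} and the injectivity of $\widehat{\,\cdot\,}(q)$ does essentially all of that work. The genuinely mathematical content is just Cantor's theorem in the ground model, and it only has to be invoked at the single node $q$ — nothing about the other nodes $r\geq q$ is needed — which is what keeps the argument short.
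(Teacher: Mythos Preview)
Your proof is correct and follows essentially the same route as the paper: assume the existential is forced at some node, extract from $f$ a classical (sur)jection $\mathbb{N}\to P(\mathbb{N})$ at that single node via the hat-embedding, and appeal to Cantor's theorem in the ground model. The paper is terser---it defines $f_1=\{(a,b):\widehat{(a,b)}(t)\in f(t)\}$ and simply asserts that $f_1$ is a surjection from $\mathbb{N}$ onto $P(\mathbb{N})$---whereas you unwind the forcing clauses explicitly and run the diagonal argument by hand; your caution in treating $\bar f$ as partial is unnecessary (Theorem~\ref{function} gives totality of $f_q$), but harmless.
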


\begin{proof}
We want to show that for every $q\geq p$,
\[\nVdash_q \exists f(f\subseteq (\widehat{\mathbb{N}\times P(\mathbb{N})})(p)\wedge  f \text{ is a function }\wedge f \text{ is onto}).\]
Suppose there exists $t\geq p$ where the proposition is forced and $f\in V(t)$ such that 
\[\Vdash_t (f\subseteq (\widehat{\mathbb{N}\times P(\mathbb{N})})(p)\wedge  f \text{ is a function }\wedge f \text{ is onto}).\]
Consider \[f_1=\{(a,b): \widehat{(a,b)}(t)\in f(t)\},\]
it is easy to show that $f_1$ defines a surjective function from $\mathbb{N}$ to $P(\mathbb{N})$ generating a contradiction.
\end{proof}

In the same way we obtain,

\begin{corol}$\Vdash_p  \neg \exists f(f\subseteq
\widehat{\textsl{P}(\mathbb{N})}(p) \times
\widehat{\textsl{P}(\textsl{P}(\mathbb{N}))}(p)\wedge f $ is a function
$\wedge f$ is onto$)$.
\end{corol}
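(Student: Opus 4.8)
The plan is to mimic the proof of the preceding theorem almost verbatim, replacing the pair $(\mathbb{N},P(\mathbb{N}))$ by $(P(\mathbb{N}),P(P(\mathbb{N})))$ and invoking Cantor's theorem in the external model at the level $|P(\mathbb{N})|<|P(P(\mathbb{N}))|$ instead of $|\mathbb{N}|<|P(\mathbb{N})|$. First I would unwind the semantics of negation in the hereditary-subset topology, where $[p)$ is the least open set containing $p$: the claim $\Vdash_p\neg\exists f(\dots)$ is equivalent to saying that for every $q\geq p$ one has $\nVdash_q\exists f(f\subseteq\widehat{\textsl{P}(\mathbb{N})}(p)\times\widehat{\textsl{P}(\textsl{P}(\mathbb{N}))}(p)\wedge f\text{ is a function}\wedge f\text{ is onto})$. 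So I would argue by contradiction: assume some $t\geq p$ forces this existential, witnessed by $f\in V(t)$ with $\Vdash_t(f\subseteq\widehat{\textsl{P}(\mathbb{N})}(p)\times\widehat{\textsl{P}(\textsl{P}(\mathbb{N}))}(p)\wedge f\text{ is a function}\wedge f\text{ is onto})$.

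Second, I would extract a genuine classical surjection from $f$. Put $f_1=\{(a,b):a\in P(\mathbb{N}),\ b\in P(P(\mathbb{N})),\ \widehat{(a,b)}(t)\in f(t)\}$. Using the lemma $\widehat{(a,b)}(p)=(\widehat{a}(p),\widehat{b}(p))$ together with the product lemma and its corollary for $\textsl{P}(\omega),\textsl{P}(\textsl{P}(\omega))$, membership of $\widehat{(a,b)}(t)$ in $f(t)$ is the same as $\Vdash_t(\widehat{a}(t),\widehat{b}(t))\in f$, with $\widehat{a}(t),\widehat{b}(t)$ ranging over the fibres $\widehat{\textsl{P}(\mathbb{N})}(p)(t)$ and $\widehat{\textsl{P}(\textsl{P}(\mathbb{N}))}(p)(t)$, which by Corollary \ref{redef} are exactly $\{\widehat{a}(t):a\in P(\mathbb{N})\}$ and $\{\widehat{b}(t):b\in P(P(\mathbb{N}))\}$. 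Single-valuedness of $f_1$ follows from the fact that $f$ is forced to be a function at $t$ (so by Theorem \ref{function} the fibre map $f_t$ is an honest function), combined with injectivity of the embedding $\Psi:V\to V(t)$ (Step 1 of the monomorphism theorem), which turns $\widehat{b}(t)=\widehat{b'}(t)$ into $b=b'$. Totality of $f_1$ on all of $P(\mathbb{N})$ is not needed; what is needed is surjectivity onto $P(P(\mathbb{N}))$: given $B\in P(P(\mathbb{N}))$, the object $\widehat{B}(t)$ lies in $\widehat{\textsl{P}(\textsl{P}(\mathbb{N}))}(p)(t)$, and unwinding ``$f$ is onto'' at the node $t$ (the clause for $\forall$ forces existence of a witness over the open set $[t)$, in particular at $t$ itself) yields a section $a^{*}$ defined at $t$ with $\Vdash_t a^{*}\in\widehat{\textsl{P}(\mathbb{N})}(p)$ and $\Vdash_t(a^{*},\widehat{B}(t))\in f$; writing $a^{*}=\widehat{a}(t)$ gives $\widehat{(a,B)}(t)\in f(t)$, i.e. $f_1(a)=B$.

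Finally, $f_1$ is then a surjection from $P(\mathbb{N})$ onto $P(P(\mathbb{N}))$ in the ground model, contradicting Cantor's theorem there; hence no such $t$ exists and the corollary follows. The only real work, and the part I expect to be fiddly rather than deep, is the bookkeeping in the second step: checking that the ``objects forced as ordered pairs'' appearing in the definitions literally coincide with the hatted pairs $\widehat{(a,b)}(t)$, so that classical membership in $f(t)$ transfers cleanly. Every ingredient needed for this (the ordered-pair lemma, the product lemma and its corollary, Corollary \ref{redef}, Theorem \ref{function}, and injectivity of $\Psi$) is already available, so the argument is essentially a transcription of the preceding proof.
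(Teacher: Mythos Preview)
Your proposal is correct and follows exactly the route the paper intends: the paper proves the preceding theorem by extracting $f_1=\{(a,b):\widehat{(a,b)}(t)\in f(t)\}$ from a hypothetical witness $f$ at some $t\geq p$ and declaring that $f_1$ is easily seen to be a classical surjection, then states that the present corollary is obtained ``in the same way''. You have simply transcribed that argument with $(\mathbb{N},P(\mathbb{N}))$ replaced by $(P(\mathbb{N}),P(P(\mathbb{N})))$ and, in addition, spelled out the ``easy to show'' step (single-valuedness via Theorem~\ref{function} and injectivity of $\Psi$, surjectivity via the node-$t$ instance of the forced onto clause) more carefully than the paper does.
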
 

And

\begin{corol}\label{card1} 
\[\Vdash_p
|\widehat{\mathbb{N}}(p)|<|\widehat{\textsl{P}(\mathbb{N})}(p)|<
|\widehat{\textsl{P}(\textsl{P}(\mathbb{N}))}(p)|.\]
\end{corol}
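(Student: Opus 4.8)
The plan is to read off the two strict inequalities directly from the Theorem just proved and the Corollary following it, together with the trivial fact that each object in the chain injects into the next one. First I would fix the reading of $<$: $\Vdash_p|a|<|b|$ is to mean $\Vdash_p|a|\le|b|$ (some object is forced to be an injection of $a$ into $b$) together with $\Vdash_p\neg\,(|a|=|b|)$ (no object is forced to be a bijection of $a$ onto $b$). Since ``being a bijection implies being onto'' is an intuitionistic tautology at the level of the defining formulas, it is forced at every node; hence the Theorem, which says $\Vdash_p$ there is no onto function $\widehat{\mathbb{N}}(p)\to\widehat{\textsl{P}(\mathbb{N})}(p)$ (using the Lemma $\widehat{\mathbb{N}\times \textsl{P}(\mathbb{N})}(p)=\widehat{\mathbb{N}}(p)\times\widehat{\textsl{P}(\mathbb{N})}(p)$), immediately gives $\Vdash_p\neg\,(|\widehat{\mathbb{N}}(p)|=|\widehat{\textsl{P}(\mathbb{N})}(p)|)$; likewise the Corollary gives $\Vdash_p\neg\,(|\widehat{\textsl{P}(\mathbb{N})}(p)|=|\widehat{\textsl{P}(\textsl{P}(\mathbb{N}))}(p)|)$.

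It then remains only to produce the injections, and here there is essentially nothing to do. Classically one has the chain $\mathbb{N}\subseteq\textsl{P}(\mathbb{N})\subseteq\textsl{P}(\textsl{P}(\mathbb{N}))$: each von Neumann natural is a subset of $\mathbb{N}$, so $\mathbb{N}\subseteq\textsl{P}(\mathbb{N})$, and applying $\textsl{P}$ gives the second inclusion. By corollary \ref{redef}, $\widehat{a}(p)(q)=\{\widehat{b}(q):b\in a\}$, so $a\subseteq b$ forces $\widehat{a}(p)(q)\subseteq\widehat{b}(p)(q)$ for every $q\ge p$; hence $\Vdash_p\widehat{\mathbb{N}}(p)\subseteq\widehat{\textsl{P}(\mathbb{N})}(p)\subseteq\widehat{\textsl{P}(\textsl{P}(\mathbb{N}))}(p)$. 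The diagonal object $\Delta$ (with $\Vdash_q(a,b)\in\Delta\Leftrightarrow a=b$, the ordered pairs being those forced as such) lies in $V(p)$ by the usual closure check, and its fibrewise maps $\Delta_q$ are just the inclusions of the smaller fibre into the larger one, which are injective; so by Theorem \ref{function} it is forced to be an injective function. This yields $\Vdash_p|\widehat{\mathbb{N}}(p)|\le|\widehat{\textsl{P}(\mathbb{N})}(p)|$ and $\Vdash_p|\widehat{\textsl{P}(\mathbb{N})}(p)|\le|\widehat{\textsl{P}(\textsl{P}(\mathbb{N}))}(p)|$, and combining with the first paragraph gives both strict inequalities. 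Since $p$ was arbitrary, the Corollary follows.

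The one place that needs genuine care -- and which I would treat as the real content of the argument -- is ensuring that the chosen definition of strict cardinal inequality is precisely the one that the absence of a \emph{surjection} rules out. If $|a|<|b|$ were instead unpacked as $|a|\le|b|\wedge\neg(|b|\le|a|)$, one would have to derive a contradiction from a pair of injections going both ways, i.e. invoke a Schr\"oder--Bernstein argument inside $V(p)$; but Schr\"oder--Bernstein is not intuitionistically innocuous, so I would deliberately avoid it by working with the ``injection but no bijection'' (equivalently ``injection but no surjection'') formulation -- which is exactly what the Theorem and its Corollary were set up to deliver. Everything else is the routine transfer of elementary facts about $\mathbb{N}$, $\textsl{P}(\mathbb{N})$ and $\textsl{P}(\textsl{P}(\mathbb{N}))$ through $a\mapsto\widehat{a}(p)$, using the already-established lemmas on $\widehat{\,\cdot\,}$, on ordered pairs and products, and on functions in $V(p)$.
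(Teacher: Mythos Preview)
Your argument is correct and is essentially a fleshed-out version of what the paper leaves implicit: the paper states Corollary~\ref{card1} with no proof at all, treating it as an immediate consequence of the preceding Theorem and its Corollary (the two ``no surjection'' results). Your contribution is to make explicit the reading of $|a|<|b|$ and to supply the injections via the transferred inclusions $\mathbb{N}\subseteq\textsl{P}(\mathbb{N})\subseteq\textsl{P}(\textsl{P}(\mathbb{N}))$, which the paper does not bother to write down.

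Your closing remark is well taken and worth keeping: the paper never fixes a formal definition of $|a|<|b|$ in $V(p)$, so the choice ``injection exists and no surjection/bijection exists'' is the natural one given what has actually been proved, and it sidesteps any appeal to Schr\"oder--Bernstein in the intuitionistic internal logic. One small cosmetic point: rather than invoking the abstract ``diagonal object'' $\Delta$, it is slightly more in the spirit of the surrounding text to name the injection explicitly as $\iota$ with $\iota(q)=\{(\widehat{a}(q),\widehat{a}(q)):a\in\mathbb{N}\}$ (respectively $a\in\textsl{P}(\mathbb{N})$), which visibly lies in $V(p)$ by the same closure checks used for $\chi_{H}$ and $\Phi$ later on; but this is a matter of presentation, not substance.
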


Until now the results presented are being proved for any partial order $\mathbb{P}$ with the topology of its hereditary subsets. From this point we will work over the next order,
\[\langle \mathbb{P}, \leq \rangle=\langle fin(P(P(\mathbb{N}))\times \mathbb{N}\rightarrow 2), \subseteq \rangle, \]
where 
\[fin(P(P(\mathbb{N}))\times \mathbb{N}\rightarrow 2)=\{ f: |f|<|\mathbb{N}|\wedge f\text{ is a function }\wedge dom(f)\subset P(P(\mathbb{N}))\times \mathbb{N} \wedge ran(f)\subset 2\}.\]

Given $p\in\mathbb{P}$ define

\[A:[p)\rightarrow\bigcup_{q\geq p}V(q)\]
\[A(q)=\{(\widehat{H}(q),\widehat{n}(q)):q(H,n)=1\}.\]

We have that $\Vdash_p  A\subseteq \widehat{P(P(\mathbb{N}))}(p)\times \widehat{\mathbb{N}}(p)$ and using $A$ we can define

\[\chi_{_{A}}:[p)\rightarrow\bigcup_{q\geq p}V(q)\]
\[\chi_{_{A}}(q)=\{((\widehat{H}(q),\widehat{n}(q)),\widehat{K}(q)):H\in\textsl{P}(\textsl{P}(\mathbb{N})), n\in\mathbb{N}, K=\{r\geq q:
\Vdash_r(\widehat{H}(r),\widehat{n}(r))\in A\},\]
 that satisfies $\Vdash_p \chi_A \text{ is a function from }\widehat{P(P(\mathbb{N}))}(p)\times \widehat{\mathbb{N}}(p) \text{ to }\widehat{\Omega(p)}(p)$. Using $\chi_A$ we can to construct an injective function $\Phi$ from $\widehat{P(P(\mathbb{N}))}(p)$ to $\widehat{\Omega(p)}(p)^{\widehat{\mathbb{N}}(p)}$, in the next way.
 
 \[\Phi(\widehat{H}(p)):[p)\rightarrow\bigcup_{q\geq
p}V(q)\]
\[\Phi(\widehat{H}(p))(q)=\{(\widehat{n}(q),\chi_{_A}(\widehat{H}(q),\widehat{n}(q))):n\in
\mathbb{N}\}\]
 
 \[\Phi:[p)\rightarrow\bigcup_{q\geq p}V(q)\]
\[\Phi(q)=\{(\widehat{H}(q),\Phi(\widehat{H}(q))):H\in\textsl{P}(\textsl{P}(\omega))\}\]

 \begin{teor} For all $p\in\mathbb{P}$
 \[\Vdash_p \Phi \text{ is an injective function from }\widehat{P(P(\mathbb{N}))}(p)\text{ to } \widehat{\Omega(p)}(p)^{\widehat{\mathbb{N}}(p)}\]
 \end{teor}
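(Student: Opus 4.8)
The plan is to establish the three things the statement packages together: that $\Phi$ is a genuine element of $V(p)$, that it is forced at $p$ as a function from $\widehat{P(P(\mathbb{N}))}(p)$ to $\widehat{\Omega(p)}(p)^{\widehat{\mathbb{N}}(p)}$, and that this function is forced injective; the first two are bookkeeping on top of Theorem~\ref{function} and the properties of $\chi_A$ already secured, while the third is where the genericity of the order enters. For the first, I would record the restriction identity $\Phi(\widehat{H}(q))\!\restriction_{[r)} = \Phi(\widehat{H}(r))$ for $r \geq q \geq p$, proved by unwinding the definition of $\Phi(\widehat{H}(q))$ and invoking $\widehat{n}(q)\!\restriction_{[r)} = \widehat{n}(r)$, the known behaviour of $\chi_A$ under restriction, and the identity $\widehat{(a,b)}(q) = (\widehat{a}(q),\widehat{b}(q))$. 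With this in hand, showing $\Phi \in V(p)$ is mechanical: $\Phi(q) \subseteq V(q)$ reduces to $\Phi(\widehat{H}(q)) \in V(q)$ for each $H$ (each member of $\Phi(q)$ being the canonical pair of $\widehat{H}(q)\in V(q)$ with $\Phi(\widehat{H}(q))$), the second clause in the definition of the hierarchy for $\Phi$ is exactly the restriction identity together with the analogous one for $\chi_A$, and a rank bound in terms of $\mathrm{ran}_p(\chi_A)$ and $\mathrm{ran}_p(\widehat{P(P(\mathbb{N}))}(p))$ then lands $\Phi$ in some $V_\alpha(p)$.

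For the second step I would invoke Theorem~\ref{function}: it suffices that for every $q \geq p$ the assignment taking $a$ to the unique $b$ with $\Vdash_q (a,b)\in\Phi$ is a total map from $\widehat{P(P(\mathbb{N}))}(p)(q)$ into $\widehat{\Omega(p)}(p)^{\widehat{\mathbb{N}}(p)}(q)$. By Corollary~\ref{redef} every $a$ in the domain is some $\widehat{H}(q)$, and by the definition of $\Phi(q)$ together with injectivity of the canonical embedding the associated $b$ is exactly $\Phi(\widehat{H}(q))$, so the map is total; that it lands in the codomain, i.e.\ that $\Phi(\widehat{H}(q))$ is forced as a function from $\widehat{\mathbb{N}}(p)$ to $\widehat{\Omega(p)}(p)$, is a second application of Theorem~\ref{function}, its node-$r$ uncurrying being $\widehat{n}(r)\mapsto\chi_A(\widehat{H}(r),\widehat{n}(r))$, which is a function because $\chi_A$ is forced as a function of its two arguments.

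The core is injectivity. Translating the Kripke--Joyal clauses for $\forall$ and $\rightarrow$ into the order (the least neighbourhood of $r$ being $[r)$), forcing $\forall x\forall y\big(x,y\in\widehat{P(P(\mathbb{N}))}(p)\wedge\Phi(x)=\Phi(y)\rightarrow x=y\big)$ at $p$ comes down to: for all $r \geq p$ and $x,y\in V(r)$, if $\Vdash_r x,y\in\widehat{P(P(\mathbb{N}))}(p)$ and $\Vdash_r\Phi(x)=\Phi(y)$ then $\Vdash_r x=y$. By Corollary~\ref{redef} we have $x=\widehat{H_1}(r)$, $y=\widehat{H_2}(r)$, so by injectivity of the canonical embedding it is enough to derive $H_1=H_2$. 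Suppose $H_1\neq H_2$. Comparing the values at $r$ of $\Phi(\widehat{H_1}(r))$ and $\Phi(\widehat{H_2}(r))$ and using that distinct $n$ give distinct $\widehat{n}(r)$, one obtains $\chi_A(\widehat{H_1}(r),\widehat{n}(r))=\chi_A(\widehat{H_2}(r),\widehat{n}(r))$ for all $n$, hence $\widehat{K_{1,n}}(r)=\widehat{K_{2,n}}(r)$ and, by injectivity once more, $K_{1,n}=K_{2,n}$ for all $n$, where $K_{i,n}=\{s\supseteq p:(H_i,n)\in\operatorname{dom}(s)\wedge s(H_i,n)=1\}$. But $\operatorname{dom}(p)$ is finite, so one may choose $n$ with $(H_1,n),(H_2,n)\notin\operatorname{dom}(p)$; then the condition $p\cup\{((H_1,n),1)\}$ extends $p$ and lies in $K_{1,n}\setminus K_{2,n}$, contradicting $K_{1,n}=K_{2,n}$. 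Hence $H_1=H_2$ and $\Vdash_r x=y$.

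I expect the genuine obstacle to be exactly the last two sentences of the injectivity step: recognising that distinct indices $H\in P(P(\mathbb{N}))$ are pulled apart by a one-point extension of any condition, which is nothing but the density argument underlying Cohen forcing and is what makes $\Phi$ injective in every fibre. The rest is careful but routine: keeping the canonical-name calculus straight (injectivity of $\widehat{\ \cdot\ }(q)$, the identities for $\widehat{(a,b)}(q)$ and for restrictions), chaining Theorem~\ref{function} with the established function property of $\chi_A$, and reading off the forcing clauses in the topology of hereditary subsets.
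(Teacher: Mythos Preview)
Your argument is essentially the paper's: the core is the one-point-extension trick separating distinct $H_1,H_2\in P(P(\mathbb{N}))$, and your bookkeeping for the first two parts is more explicit than the paper's, which dispatches them in a single sentence via Theorem~\ref{function}. The only substantive difference is that the paper phrases injectivity contrapositively ($x\neq y\rightarrow\Phi(x)\neq\Phi(y)$) and argues by exhibiting, for each node above $t$, an extension where the images differ, whereas you argue the direct implication; since equality of canonical names $\widehat{H}(r)$ is decidable in the sheaf, the two formulations coincide here.

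One slip to fix: in your injectivity step the sets $K_{i,n}$ and the separating condition must be taken relative to the working node $r$, not $p$. At level $r$ the value $\chi_A(\widehat{H_i}(r),\widehat{n}(r))$ is $\widehat{K}(r)$ with $K=\{s\geq r:s(H_i,n)=1\}$, and from $\Vdash_r\Phi(x)=\Phi(y)$ you only learn that these $r$-based sets agree; moreover the condition $p\cup\{((H_1,n),1)\}$ need not extend $r$ at all, so it does not witness anything about $K_{1,n}^{(r)}\setminus K_{2,n}^{(r)}$. Replacing $p$ by $r$ throughout that paragraph (choosing $n$ with $(H_1,n),(H_2,n)\notin\operatorname{dom}(r)$, which is possible since $r$ is finite, and taking $s=r\cup\{((H_1,n),1)\}$) repairs this, and the argument then goes through exactly as you wrote.
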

 
 \begin{proof}
 From the definition  and \ref{function}  it is clear that $\Phi$ is a function in $V(p)$ from $\widehat{P(P(\mathbb{N}))}(p)$ to $\widehat{\Omega(p)}(p)^{\widehat{\mathbb{N}}(p)}$. To show that $\Phi$ is forced as an one to one function in the universe of variable sets  we want to show 
\[\Vdash_p \forall x\forall y((x\in
\widehat{\textsl{P}(\textsl{P}(\omega))}(p)\wedge
y\in\widehat{\textsl{P}(\textsl{P}(\omega))}(p)\wedge x\neq
y)\rightarrow \Phi(x)\neq \Phi(y)).\]
Consider $t\geq q\geq p$ and  $x,y$ such that $\Vdash_t((x\in
\widehat{\textsl{P}(\textsl{P}(\omega))}(t)\wedge
y\in\widehat{\textsl{P}(\textsl{P}(\omega))}(t)\wedge x\neq y)$ then $x=\widehat{H}(t)$ and $y=\widehat{M}(t)$ for some  $H,M\in P(P(\mathbb{N}))$ and for all $r\geq t$, $\nVdash x=y$, then $H\neq M$. Since $t$ is a finite, there exists $n\in\mathbb{N}$ such that $t(H,N)$ and $t(M,n)$ are not defined. Construct $s\geq t$ such that $s(H,n)=1$ and $s(M,n)=0$. If $v\geq s$ note that $\Vdash_v \widehat{s}(v)\in \Phi (\widehat{H}(v))(\widehat{n}(v))=\chi_A (\widehat{H}(v))(\widehat{n}(v))$ because $r(H,n)=1$. On the other hand, since $s(M,n)=0$ we have $\Vdash_v\widehat{s}(v)\not in \Phi(\widehat{H}(s))(\widehat{n}(v))$. From this we conclude that for every $r\geq t$, $\Vdash_t \Phi(\widehat{H}(t))\neq \Phi(\widehat{M}(t))$, therefore, $\Phi$ is forced as an injective function.
 \end{proof}
 
 From this result and corollary \ref{card1} we obtain,
 
\begin{corol} 
 \[\Vdash_p
|\mathbb{N}|<|\widehat{\textsl{P}(\mathbb{N})}(p)|<|\textsl{P}(\mathbb{N})|
\]
\end{corol}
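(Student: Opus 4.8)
\emph{Proof proposal.} The plan is to assemble the displayed chain of cardinality inequalities from the two cardinal facts already available, using the comparison map $\Phi$ of the preceding theorem. First I would record the identifications of canonical names that make the statement meaningful. By the corollary following the lemma ``$\widehat{\mathbb{N}}(p)$ is the minimum inductive set'' we have $\Vdash_p\widehat{\mathbb{N}}(p)=\mathbb{N}$, and by the construction of the object $G$ via the power set axiom applied to $\widehat{\mathbb{N}}(p)$ we have $\Vdash_p G=P(\mathbb{N})$; thus in the displayed formula the last symbol $P(\mathbb{N})$ denotes the genuine power set of the naturals inside $V(p)$, namely $G$, while $\widehat{P(\mathbb{N})}(p)$ is the image of the external power set, and these need not agree.

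Next I would invoke the bijection theorem, $\Vdash_p$ ``$\chi$ is a bijective function from $G$ to $\widehat{\Omega(p)}(p)^{\widehat{\mathbb{N}}(p)}$''. Since $\Vdash_p ZFC$, all ensuing manipulations of cardinals may be carried out by internal reasoning in $V(p)$, so $\Vdash_p|P(\mathbb{N})|=|G|=|\widehat{\Omega(p)}(p)^{\widehat{\mathbb{N}}(p)}|$. Combining this with the theorem just proved, $\Vdash_p$ ``$\Phi$ is an injective function from $\widehat{P(P(\mathbb{N}))}(p)$ to $\widehat{\Omega(p)}(p)^{\widehat{\mathbb{N}}(p)}$'', the composite $\chi^{-1}\circ\Phi$ is forced at $p$ to be an injection of $\widehat{P(P(\mathbb{N}))}(p)$ into $G$, hence $\Vdash_p|\widehat{P(P(\mathbb{N}))}(p)|\leq|P(\mathbb{N})|$. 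Finally I would bring in Corollary \ref{card1}, $\Vdash_p|\widehat{\mathbb{N}}(p)|<|\widehat{P(\mathbb{N})}(p)|<|\widehat{P(P(\mathbb{N}))}(p)|$, and string everything together inside $V(p)$:
\[\Vdash_p\quad |\mathbb{N}|=|\widehat{\mathbb{N}}(p)|<|\widehat{P(\mathbb{N})}(p)|<|\widehat{P(P(\mathbb{N}))}(p)|\leq|P(\mathbb{N})|,\]
so in particular $\Vdash_p|\mathbb{N}|<|\widehat{P(\mathbb{N})}(p)|<|P(\mathbb{N})|$, which is the claim, the set of intermediate cardinality being $\widehat{P(\mathbb{N})}(p)$.

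The only point demanding care is the legitimacy of these cardinal manipulations across the forcing relation, but each ingredient is either a set-theoretic fact that holds in $V(p)$ because $\Vdash_p ZFC$ (transitivity of $<$ and $\leq$ on cardinals; a forced bijection yields forced equality of cardinals; a composite of an injection with the inverse of a bijection is an injection) or has already been supplied explicitly (the names $G$, $\chi$, $\Phi$ together with their forced properties). Thus the argument is essentially bookkeeping, and no further combinatorics on the poset $\langle fin(P(P(\mathbb{N}))\times\mathbb{N}\to 2),\subseteq\rangle$ is needed beyond what went into building $\Phi$; the genuinely delicate part was the earlier verification that $\Phi$ is forced injective, which has been carried out.
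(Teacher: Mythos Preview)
Your proposal is correct and follows essentially the same approach as the paper, which simply cites the preceding theorem on $\Phi$ together with Corollary~\ref{card1} without further elaboration. You have spelled out the implicit steps---identifying $\mathbb{N}$ with $\widehat{\mathbb{N}}(p)$, identifying the internal $P(\mathbb{N})$ with $G$, and passing through the bijection $\chi$ to convert the injection $\Phi$ into the needed inequality $|\widehat{P(P(\mathbb{N}))}(p)|\leq|P(\mathbb{N})|$---exactly as intended.
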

 
Finally from this corollary and theorem \ref{ftmt} we obtain.

\begin{teor} 
Consider the partial order $\langle \mathbb{P}, \leq \rangle=\langle fin(P(P(\mathbb{N}))\times \mathbb{N}\rightarrow 2),\subseteq \rangle$ with the topology of its hereditary subsets. Let $\mathcal{F}$ a generic filter of $V^{\mathbb{P}}$ then 
\[ V^{\mathbb{P}}[\mathcal{F}]\models ZFC + \neg CH.\] 
\end{teor}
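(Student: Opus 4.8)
The plan is to read off $ZFC$ from the preceding theorem and to obtain $\neg CH$ by transporting, through the Fundamental Theorem of Model Theory (Theorem~\ref{ftmt}), the cardinality facts proved above for the sheaf $V^{\mathbb{P}}$.

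Since $V^{\mathbb{P}}[\mathcal{F}]\models ZFC$ has already been established, only $V^{\mathbb{P}}[\mathcal{F}]\models \neg CH$ remains. I would write $\neg CH$ as $\exists B\,\Theta(B)$, where $\Theta(B)$ is the conjunction of ``there is an injection $\mathbb{N}\to B$'', ``there is no function from $\mathbb{N}$ onto $B$'', ``there is an injection $B\to P(\mathbb{N})$'' and ``there is no function from $B$ onto $P(\mathbb{N})$'', with $\mathbb{N}$ and $P(\mathbb{N})$ abbreviating the least inductive set and its power set. The intended witness is the global section $\widehat{P(\mathbb{N})}(\emptyset)$: the empty condition $\emptyset$ is the $\subseteq$-least element of $\mathbb{P}$, so $[\emptyset)=\mathbb{P}$, this section is defined at every node, and its restriction to $[p)$ is $\widehat{P(\mathbb{N})}(p)$. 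The corollary $\Vdash_p |\mathbb{N}|<|\widehat{P(\mathbb{N})}(p)|<|P(\mathbb{N})|$ — itself extracted from Corollary~\ref{card1} (via the embedding $\Psi$ and the absence of a surjection onto $\widehat{P(\mathbb{N})}(p)$) together with the injection $\Phi$ and the absence of a surjection onto the internal $P(\mathbb{N})$ — says precisely that at every node $\widehat{P(\mathbb{N})}(p)$ realizes $\Theta$. Combining it with the identifications of $\widehat{\mathbb{N}}(p)$ with the internal $\mathbb{N}$ and of $G$ with the internal $P(\mathbb{N})$ recorded earlier yields $\Vdash_p \Theta(\widehat{P(\mathbb{N})}(p))$, hence $\Vdash_p \exists B\,\Theta(B)$, for every $p\in\mathbb{P}$.

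It remains to pass from forcing at every node to truth in the collapse. By Theorem~\ref{ftmt}, $V^{\mathbb{P}}[\mathcal{F}]\models \neg CH$ iff $\{x\in\mathbb{P}: V^{\mathbb{P}}\Vdash_x (\neg CH)^G\}\in\mathcal{F}$; since this set will turn out to be all of $\mathbb{P}$, which lies in $\mathcal{F}$ because $\mathcal{F}$ is a filter of open sets, it suffices to verify $\Vdash_p (\neg CH)^G$ for all $p$. This is the one delicate point, and the main obstacle I anticipate, since Theorem~\ref{ftmt} refers to the Gödel translation rather than to $\neg CH$ itself, so one must check that every ingredient survives translation. The two ``no surjection'' clauses of $\Theta$ are negative, and their translations are forced by exactly the fibrewise argument used in the theorems above (a surjection living in a fibre $V(t)$ produces an external surjection, contradicting the ground model); the two ``injection exists'' clauses have the explicit witnesses built from $\Psi$ and $\Phi$, so the translated $\neg\neg\exists$ forms are forced; and the outer $\exists B$, whose translation is $\neg\neg\exists B$, is forced because $\widehat{P(\mathbb{N})}(p)$ is a genuine forced witness. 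Reassembling these with the intuitionistic equivalences \ref{int1} and \ref{int2} already used for $\neg\neg AC^{*}$ gives $\Vdash_p (\neg CH)^G$ at every node, hence $V^{\mathbb{P}}[\mathcal{F}]\models \neg CH$; together with $ZFC$ this is the theorem.
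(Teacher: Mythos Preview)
Your proposal is correct and follows the same route as the paper: combine the previously established $V^{\mathbb{P}}[\mathcal{F}]\models ZFC$ with the corollary $\Vdash_p |\mathbb{N}|<|\widehat{P(\mathbb{N})}(p)|<|P(\mathbb{N})|$ and invoke Theorem~\ref{ftmt}. The paper in fact gives no further argument beyond citing those two results, so your careful handling of the G\"odel translation---checking that the negative clauses and the explicit witnesses $\Psi$, $\Phi$ survive passage to $(\neg CH)^G$---supplies detail that the paper simply omits.
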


\end{document}